\newcommand*{\wh}{\widehat}
\newcommand*{\ol}{\overline}
\newcommand*{\N}{\mathbb{N}}
\newcommand*{\R}{\mathbb{R}}
\newcommand*{\C}{\mathbb{C}}
\newcommand*{\Sd}{\mathbb{S}^d}
\newcommand*{\cF}{\mathcal{F}}
\DeclareMathOperator*{\esssup}{ess\,sup}
\newcommand*{\diag}{\operatorname{diag}}
\newcommand*{\Id}{\operatorname{Id}}
\renewcommand*{\Re}{\operatorname{Re}}
\renewcommand*{\Im}{\operatorname{Im}}
\newcommand{\be}{\begin{eqnarray*}}
\newcommand{\ee}{\end{eqnarray*}}
\newcommand{\ben}{\begin{eqnarray}}
\newcommand{\een}{\end{eqnarray}}
\newcommand{\bi}{\begin{itemize}}
\newcommand{\ei}{\end{itemize}}
\newtheorem{theo}{Theorem}[section]
\newtheorem{lemma}[theo]{Lemma}
\newtheorem{propo}[theo]{Proposition}
\newtheorem{corollary}[theo]{Corollary}
\theoremstyle{definition}
\newtheorem{defi}[theo]{Definition}
\newtheorem{remark}[theo]{Remark}
\newtheorem{assumption}{Assumption}
\newcounter{numpar}[section]
\title{Inhomogeneous affine Volterra processes}
\author{Julia Ackermann\thanks{Mathematisches Institut, Justus-Liebig-Universit\"{a}t Giessen  } \and 
Thomas Kruse \thanks{Mathematisches Institut, Justus-Liebig-Universit\"{a}t Giessen }
\and Ludger Overbeck\thanks{Mathematisches Institut, Justus-Liebig-Universit\"{a}t Giessen  }
}
\begin{document}

\maketitle
\begin{abstract}
We extend recent results on affine Volterra processes to the inhomogeneous case. This includes moment bounds of solutions of Volterra equations driven by a Brownian motion with an inhomogeneous kernel $K(t,s)$ and inhomogeneous drift and diffusion coefficients $b(s,X_s)$ and $\sigma(s,X_s)$. In the case of affine $b$ and $\sigma \sigma^T$ we show how the conditional Fourier-Laplace functional can be represented by a solution of an inhomogeneous Riccati-Volterra integral equation. 
For a kernel of convolution type $K(t,s)=\ol {K}(t-s)$ we establish existence of a solution to the stochastic inhomogeneous Volterra equation. 
If in addition $b$ and $\sigma \sigma^T$ are affine, we prove that the conditional Fourier-Laplace functional is exponential-affine in the past path. 
Finally, we apply these results to an inhomogeneous extension of the rough Heston model used in mathematical finance.
\end{abstract}

\section*{Introduction}
The purpose of the paper is fourfold. First, we study in Section \ref{sec:mom_bounds} general $d$-dimensional stochastic inhomogeneous Volterra equations 
\begin{equation}\label{eq:volterra_d_volt_inhom_intro}
X_t=X_0+\int_0^tK(t,s)b(s,X_s)ds+\int_0^tK(t,s)\sigma(s,X_s)dW_s, \quad t \in [0,T],
\end{equation}
where $W$ 
is an $m$-dimensional Brownian motion. Second, in Section~\ref{sec:fully_inhom_volt_case_main_thm}, we analyse the conditional Fourier-Laplace functional  
\begin{equation}\label{eq:fl_func_intro}
	E\left[\exp\left(uX_T + \int_0^T f(s) X_s ds \right)\,\bigg|\, \cF_t \right], \quad t\in[0,T],\end{equation}
  in the affine case
\begin{equation}\label{eq:affine_intro}
b(s,x) = b^0(s) + \sum_{i=1}^d b^{i}(s)x_i, \quad
\sigma(s,x)\sigma(s,x)^T=a(s,x) = A^0(s) + \sum_{i=1}^d A^{i}(s) x_i.
\end{equation}
In the homogeneous affine case, where the kernel is of convolution type $K(t,s)=\overline K(t-s)$ and $b$ and $\sigma$ do not depend on time, Abi Jaber, Larsson \& Pulido \cite{Jaber2019affine} show that, remarkably, \eqref{eq:fl_func_intro}
admits a representation in terms of an associated Riccati-Volterra equation and thereby extend the exponential-affine formula for classical affine diffusions (see, e.g., Duffie, Filipovi{\'c} \& Schachermayer \cite{Duffie2003affineProcFinance}) to the Volterra case.
In our main result, Theorem \ref{thm:multdim_volt_kts}, we extend this result to the inhomogeneous case and show 
that the functional \eqref{eq:fl_func_intro} can be expressed in terms of the solution $\psi$ to a  Riccati-type equation
\begin{equation*}
	\begin{split}
	\psi(t)&=uK(T,t)+\int_t^{T} \left(f(s) + \psi(s)B(s)+\frac{1}{2}A(s,\psi(s))\right) K(s,t)ds, \quad t \in [0,T],
	\end{split}
	\end{equation*}  
where $B$ and $A$ are based on the coefficients $b^i$ and $A^i$ in \eqref{eq:affine_intro}.  
Third, in Section~\ref{sec:conv_kernel}, assuming a convolution kernel $K(t,s)=\overline K(t-s)$ we show existence of a solution to \eqref{eq:volterra_d_volt_inhom_intro} and under the affine assumption \eqref{eq:affine_intro} we prove that the conditional Fourier-Laplace functional is exponential-affine in the past path $(X_s)_{s\in [0,t]}$.
Finally, in Section~\ref{sec:inhom_volt_heston_model}, we apply our results to extend the rough Heston model introduced and studied by El Euch \& Rosenbaum in \cite{Euch2018perfectHedging} and \cite{Euch2019characteristicFctRoughHeston} to the inhomogeneous case, which is used in mathematical finance, cf., e.g., Alfeus, Overbeck \& Schlögl~\cite{Alfeus2019regimeSwitching} and Alfeus, Nikitopoulos \& Overbeck~\cite{Alfeus2021forwardVol}.
 
In the homogeneous case, comparable results can be found in 
Abi Jaber, Larsson \& Pulido \cite{Jaber2019affine}, which serves as our basic reference. However, the inhomogeneous character requires to modify the statement of  the results and  the strategies to prove them. The result on moment bounds of solutions $(X_t)_{t\in [0,T]}$ of the stochastic inhomogeneous Volterra equation, Lemma~\ref{lem:moments_of_soln_bdd}, extends the results in the homogeneous case provided that the corresponding assumptions in 
\cite{Jaber2019affine} on the coefficients $b$ and $\sigma$ hold uniformly in $t$ (see Assumptions \ref{assumption:kernel_L_2} and \ref{assumption:linear_growth} below). 
A key observation of our paper is that one can establish the representation result for the conditional 
Fourier-Laplace functional \eqref{eq:fl_func_intro}, Theorem~\ref{thm:multdim_volt_kts}, and the result that \eqref{eq:fl_func_intro} is exponential-affine in the past path, Theorem~\ref{thm:represent_for_Y}, without making use of the so-called resolvent of the second kind of the kernel. Indeed, in contrast to \cite{Jaber2019affine}, our proofs of Theorem~\ref{thm:multdim_volt_kts} and Theorem~\ref{thm:represent_for_Y} do neither rely on this resolvent nor make use of a variation of constants formula for $X$. This observation leads to streamlined versions of the proofs already in the homogeneous case. 
If $K$ is of convolution type, i.e., $K(t,s)=\ol K(t-s)$, we  prove  existence of a solution of  \eqref{eq:volterra_d_volt_inhom_intro} by applying existence results from \cite{Jaber2019affine} to the then homogeneous time-space process $(t, X_t)$. We require the existence of the resolvent of the first kind $L$ for the kernel only in order to show in Theorem \ref{thm:represent_for_Y} that the conditional Fourier-Laplace functional \eqref{eq:fl_func_intro} is exponential-affine in the past path.

In general, affine processes build an important class of stochastic processes because despite being non-Gaussian they still exhibit some analytic tractability. In particular, their characteristic functions can be solved by ordinary differential or integral equations, so-called Riccati equations. Special instances of affine processes are also used in applications of stochastic analysis such as mathematical finance and in models in biology since a long time. First important papers are Feller~\cite{Feller1951}, Kawazu \& Watanabe~\cite{KawazuWatanabe} and Cox, Ingersoll \& Ross~\cite{CIR}.  Milestones in the development of the theory of affine processes are Duffie, Filipovi{\'c} \& Schachermayer~\cite{Duffie2003affineProcFinance} and Filipovi{\'c} \& Mayerhofer~\cite{filipovic2009affine}. Recent papers on affine processes consider matrix valued processes in Cuchiero, Filipovi{\'c}, Mayerhofer \& Teichmann~\cite{Cuchiero2011affineProcPosSemidef} and representations of fractal processes by affine processes in Harms \& Stefanovits~\cite{Harms2019affineRepresFractional}.

On the other hand, also Volterra equations are useful in many mathematical and applied topics. One of the useful features is that the dynamic might depend not only on the running time $s$ but also on $t$ via the kernel $K$ in equation \eqref{eq:volterra_d_volt_inhom_intro}. However, then the Markov property is lost. First systematic  treatments of stochastic Volterra integral equations can be found in Berger \& Mizel~
\cite{Berger1980aVoltEqIto} and \cite{Berger1980bVoltEqIto}, which in Protter~\cite{Protter1985voltEq} are extended to a semimartingale setting. In Pardoux \& Protter~\cite{Pardoux1990stochVolt} anticipative coefficients are introduced, and Volterra equations in an infinite dimensional context are analysed in Zhang~ \cite{Zhang2010stochVolt} and on Wiener spaces in Decreusefond~\cite{Decreusefond2002regProp}.

As a third feature, besides the affine and Volterra structure,  we consider inhomogeneous coefficients. For Volterra-type equations, inhomogeneity is natural since the coefficients depend on $t$ and $s$, and therefore already the early papers are formulated in a general inhomogeneous setting, e.g., Protter~\cite{Protter1985voltEq}. But so far mainly equations of type \eqref{eq:volterra_d_volt_inhom_intro} were analysed where $b$ and $\sigma$ do not depend on $s$ or where $K$ is of convolution type, i.e., $K(t,s)=\ol K(t-s)$. 
For example, Abi Jaber, Cuchiero, Larsson \& Pulido~\cite{Jaber2019weak} establish weak existence results for stochastic convolution equations with jumps avoiding the use of resolvents of first or second kind and Abi Jaber~\cite{Jaber2020weakL1} provides existence and uniqueness results for such equations with $L^1$-kernels. 
In Abi Jaber~\cite{Jaber2020laplaceTrafoWishart} and Abi Jaber~\cite{Jaber2020charactFctAnalytic} kernels of type $K(t,s)$ in a Gaussian setting are considered, with linear drift $b(s,x)=bx$ and constant diffusion term. The basic equation in Coutin \& Decreusefond~\cite{Coutin2001stochVolt} is as  \eqref{eq:volterra_d_volt_inhom_intro} but regularity on the kernel $K$ and Lipschitz-conditions on $b$ and $\sigma$ are imposed, leaving out affine specifications. The starting point in the recent work Jacquier \& Pannier~\cite{Jacquier2020largeDev} is also  a general equation such as \eqref{eq:volterra_d_volt_inhom_intro}; however, existence of a solution is basically assumed. Also the equation handled in Wang~\cite{Wang2008existence} concerns an inhomogeneous equation, but the result is not directly applicable in our affine case.  
In the context of affine processes the results from Duffie, Filipovi{\'c} \& Schachermayer ~\cite{Duffie2003affineProcFinance} are extended to the inhomogeneous case in Filipovi{\'c}~\cite{filipovic2005affineinhomogeneous}.

One motivation to consider affine Volterra processes comes from volatility modeling in mathematical finance. There, the Volterra equation is mainly used with the specification $K(t,s)=\frac{(t-s)^{\alpha-1}}{\Gamma(\alpha)},\alpha\in (\frac{1}{2},1)$, giving rise to rough volatility processes. Recent research shows that these rough phenomena are important to give a more realistic description of the priced and observed volatility, cf., e.g., \cite{Jaber2019multifactorApprox, Jaber2019markovianStruct,Jaber2019liftingHeston, Euch2019characteristicFctRoughHeston, Euch2019rougheningHeston, Gatheral2018volatilityIsRough, Livieri2018roughVolaEvidence, Euch2018perfectHedging, Comte2012affineFractionalVola, Gatheral2019affineForwardVariance, Fukasawa2019isVolaRough, Jaisson2016roughHawkes, KellerRessel2018affineRoughModels, Alfeus2019regimeSwitching, Alfeus2021forwardVol}. 
Time-inhomogeneous parameters in volatility modelling are considered in El Euch \& Rosenbaum~\cite{Euch2018perfectHedging} where the conditional characteristic function of a rough Heston model with time-inhomogeneous mean-reversion level is studied, in Alfeus, Overbeck \& Schlögl~\cite{Alfeus2019regimeSwitching} where this is applied to a regime switching model and in Alfeus, Nikitopoulos \& Overbeck~ \cite{Alfeus2021forwardVol} where a rough Heston model with time-dependent volatility is required. The analytic requirements of those  models can be handled by our results in Section \ref{sec:inhom_volt_heston_model} on the inhomogeneous rough Heston model.

\bigskip

\textbf{Notation:} 
For every $m,n\in \N$ we denote by $\lVert \cdot\rVert$ the Frobenius norm $\lVert \cdot \rVert \colon \C^{m\times n} \to [0,\infty)$, $\lVert A\rVert^2= \sum_{k=1}^m \sum_{j=1}^n (\Re A_{kj})^2 + (\Im A_{kj})^2$ where $A=(A_{kj})_{k =1,\ldots,m; j=1,\ldots,n} \in \C^{m\times n}$. 
By convention, for $d \in \N$, vectors in $\C^d$ are column vectors, whereas elements of $(\C^d)^*$ are row vectors. 
For $m,n \in \N$ and $A = (A_{kj})_{k =1,\ldots,m; j=1,\ldots,n} \in \C^{m \times n}$ we denote by $A^T = (A_{jk})_{j =1,\ldots,n; k=1,\ldots,m} \in \C^{n\times m}$ the transpose of $A$. 
Moreover, we denote for every $d\in \N$ by $\Id_d \in \R^{d\times d}$ the $d$-dimensional identity matrix and by $\Sd$ the set of symmetric $d\times d$-matrices with entries from $\R$. 
For every $d \in \N$ and a $d$-dimensional continuous local martingale $M$ with i-th component process $M^{(i)}$, $i \in \{1,\ldots,d\}$, we denote by $\langle M \rangle = \sum_{i=1}^d \langle M^{(i)}\rangle$ the quadratic variation process of $M$. 
For $T \in (0,\infty)$, $m,n \in\N$, $f\colon [0,T] \to \C^{n\times m}$ and $h \in [0,T]$ the shifted function $\Delta_h f\colon [0,T] \to \C^{n \times m}$ satisfies $\Delta_h f(t) = f((t+h)\wedge T)$ for all $t \in [0,T]$. 
We denote for $T \in (0,\infty)$, $m,n \in \N$ and $f\colon [0,T] \to \C^{n \times m}$ the total variation of $f$ over $[0,T]$ by $\| f \|_{TV} = \sup \sum_{j} \| f(t_{j+1}) - f(t_j) \|$ where the supremum is taken over all partitions $0\leq t_1 <\ldots < t_N \leq T$, $N \in \N$. 
For $T \in (0,\infty)$, $d \in \N$ and a right-continuous function $f \colon [0,T] \to (\C^d)^*$ of bounded variation we denote its distributional derivative by $df$ and use the convention that $df(\{0\})=0$, such that  $f(t)=f(0)+\int_{[0,t]} df(s)$, $t \in [0,T]$.
For $T \in (0,\infty)$, a measurable function $F$ on $[0,T]$ and a measure $L$ on $[0,T]$ of bounded variation, both possibly matrix-valued\footnote{We say that $L$ is a $\C^{n\times m}$-valued measure on $[0,T]$ if each component of $L$ is a complex measure on $[0,T]$. Moreover, we say that $L$ is of bounded variation if each component has finite total variation on $[0,T]$. For further details we refer to \cite[Section 3.2]{Gripenberg1990Book}.}, we denote 
$(F \star L)(t) = \int_{[0,t]} F(t-s) L(ds)$, $(L \star F)(t) = \int_{[0,t]} L(ds) F(t-s)$ 
for $t \in (0,T]$, if these expressions are well-defined, and extend it to $t=0$ by right-continuity if possible. 
We also denote the convolution of two measurable functions $F,G$ on $[0,T]$ by $(F \star G)(t) = \int_0^t F(t-s)G(s)ds$, $t \in [0,T]$, if it is well-defined. 
For a square integrable function $F$ on $[0,T]$ and a continuous semimartingale $Z=\int_0^{\cdot} b_s ds + \int_0^{\cdot} \sigma_s dW_s$ with locally bounded adapted $b,\sigma$ and a Brownian motion $W$, both possibly multidimensional, we denote $(F\star dZ)(t) = \int_0^t F(t-s)dZ_s$, $t \in [0,T]$. 
For $T \in (0, \infty)$, $d \in \N$, $f \colon [0,T] \to \C$ and $\beta\ge 0$ let $I^\beta$ denote the Riemann–Liouville integral operator, i.e., $(I^\beta f)(t)=\frac{1}{\Gamma(\beta)}\int_0^t (t-s)^{\beta-1}f(s)ds$ for all $t \in [0,T]$ if $\beta>0$ and $(I^0 f)(t)=f(t)$ for all $t \in [0,T]$ whenever the expressions are well-defined. 
Moreover, let $D^\beta=\frac{d}{dt}I^{1-\beta}$ be the Riemann-Liouville fractional derivative.

\section{Moment bounds for stochastic inhomogeneous Volterra equations}
\label{sec:mom_bounds}
Let $T \in (0,\infty)$ and $m, d\in \N$. 
Let $b\colon [0,T] \times \R^d \to \R^d$, $\sigma\colon [0,T] \times \R^d \to \R^{d\times m}$ be two measurable functions, let $X_0 \in \R^d$ be a deterministic initial condition and 
let $K\colon [0,T]\times [0,T] \to \R^{d\times d}$ be
a given kernel.

We consider the $d$-dimensional stochastic inhomogeneous Volterra equation
\begin{equation}\label{eq:volterra_d_volt_inhom}
X_t=X_0+\int_0^tK(t,s)b(s,X_s)ds+\int_0^tK(t,s)\sigma(s,X_s)dW_s, \quad t \in [0,T],
\end{equation}
where $W=(W^{(1)}, W^{(2)}, \ldots, W^{(m)})^T$ is an $m$-dimensional Brownian motion.

\begin{defi} 
	\begin{enumerate}
		\item[(i)] 
		We say that the stochastic Volterra equation \eqref{eq:volterra_d_volt_inhom} has a strong solution if for any $m$-dimensional Brownian motion $W$ on a filtered probability space $(\Omega, \mathcal F, (\mathcal F_t)_{t\in [0,T]}, P)$ where $(\cF_t)_{t \in [0,T]}$ is the completed right-continuous filtration generated by $W$, there exists an adapted continuous $d$-dimensional process $X=(X_t)_{t\in[0,T]}$ such that \eqref{eq:volterra_d_volt_inhom} holds $P$-a.s.
		
		\item[(ii)] We say that the stochastic Volterra equation \eqref{eq:volterra_d_volt_inhom} has a weak solution if there exists a filtered probability space $(\Omega, \mathcal F, (\mathcal F_t)_{t\in [0,T]}, P)$ satisfying the usual conditions and supporting an $m$-dimensional $(\cF_t)_{t\in[0,T]}$-Brownian motion $W$ and an adapted continuous $d$-dimensional process $X=(X_t)_{t\in[0,T]}$ such that \eqref{eq:volterra_d_volt_inhom} holds $P$-a.s.
	\end{enumerate} 
\end{defi}

Lemma~\ref{lem:moments_of_soln_bdd} below provides moment bounds for continuous weak solutions of inhomogeneous stochastic Volterra equations. Its proof is based on ideas from \cite[Lemma 3.1]{Jaber2019affine} where moment bounds for stochastic Volterra equations with convolution kernel $K(t,s)=\ol K(t-s) 1_{s\leq t}$ and time-independent coefficients $b$ and $\sigma$ are established. We require the following two conditions.
\begin{assumption}\label{assumption:kernel_L_2}
	$K$ is measurable and it holds $\sup_{t \in [0,T]} \int_0^t \|K(t,s)\|^2 ds < \infty$. 
\end{assumption}
\begin{assumption}\label{assumption:linear_growth}
	There exists $c_{LG} \in (0,\infty)$ such that for all $t \in [0,T]$, $x \in \R^d$ it holds $\|b(t,x)\| + \| \sigma(t,x)\| \leq c_{LG}(1+\|x\|)$.
\end{assumption}

\begin{lemma}\label{lem:moments_of_soln_bdd}
	Assume that $b \in C([0,T]\times\R^d,\R^d)$ and $\sigma \in C([0,T]\times\R^d,\R^{d\times m})$. 
	Suppose that \ref{assumption:kernel_L_2} and \ref{assumption:linear_growth} are satisfied and that $X=(X_t)_{t\in[0,T]}$ is a continuous weak solution of \eqref{eq:volterra_d_volt_inhom}. 
	Then, for all $p\geq 1$ there exists a constant $c \in (0,\infty)$ depending only on  
	$T,d,m,\|X_0\|, K, c_{LG}, p$ such that 
	\begin{equation}\label{eq:moments_of_soln_bdd}
	\sup_{t \in [0,T]} E\left[ \|X_t\|^p \right] \leq c .
	\end{equation}
\end{lemma}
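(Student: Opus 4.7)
The plan is to adapt the proof of \cite[Lemma~3.1]{Jaber2019affine} to the inhomogeneous kernel by combining a localization argument with a Volterra--Gr\"onwall inequality. By Jensen's inequality, $E[\|X_t\|^p] \leq (E[\|X_t\|^2])^{p/2}$ for $1 \leq p \leq 2$, so it suffices to establish the bound for $p \geq 2$, which I fix for the rest of the argument.

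The first step is localization. Since $X$ is continuous, the stopping times $\tau_n := \inf\{t\in[0,T] : \|X_t\|\geq n\}\wedge T$ satisfy $\tau_n\uparrow T$ almost surely, and on $\{s<\tau_n\}$ Assumption~\ref{assumption:linear_growth} yields $\|b(s,X_s)\|+\|\sigma(s,X_s)\|\leq c_{LG}(1+n)$. Introducing the truncated process
\begin{equation*}
Y^n_t := X_0 + \int_0^t K(t,s)b(s,X_s)1_{s<\tau_n}\,ds + \int_0^t K(t,s)\sigma(s,X_s)1_{s<\tau_n}\,dW_s,
\end{equation*}
which is well-defined by Assumption~\ref{assumption:kernel_L_2} and coincides with $X_t$ on $\{t<\tau_n\}$ by \eqref{eq:volterra_d_volt_inhom}, and writing $\phi_n(t):=E[\|X_t\|^p 1_{t<\tau_n}]$, one has $\phi_n(t)\leq n^p$ and, since $1_{t<\tau_n}\leq 1_{s<\tau_n}$ for $s\leq t$, also $\phi_n(t)\leq E[\|Y^n_t\|^p]$.

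The core moment estimate bounds $E[\|Y^n_t\|^p]$ via the triangle inequality, the Burkholder--Davis--Gundy inequality applied to the stochastic integral, Cauchy--Schwarz combined with Jensen on the drift integral (both using Assumption~\ref{assumption:kernel_L_2} to extract the factor $M:=\sup_t\int_0^t\|K(t,s)\|^2\,ds$), and the linear growth in Assumption~\ref{assumption:linear_growth}. These estimates produce a linear integral inequality of the form
\begin{equation*}
\phi_n(t)\leq C_1 + C_2\int_0^t\bigl(1+\|K(t,s)\|^2\bigr)\phi_n(s)\,ds,
\end{equation*}
with constants $C_1,C_2$ depending only on $T,d,m,\|X_0\|,M,c_{LG},p$ and, importantly, not on $n$. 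A Volterra--Gr\"onwall argument using that $\phi_n$ is bounded and that the kernel has uniformly bounded $L^1$-norm in $t$ then gives $\phi_n(t)\leq c$ uniform in $n$ and $t\in[0,T]$, after which Fatou's lemma as $n\to\infty$ yields $E[\|X_t\|^p]\leq\liminf_n\phi_n(t)\leq c$.

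The main obstacle is this Gr\"onwall step. In the convolution case of \cite{Jaber2019affine} one appeals to the $L^1$-resolvent of the convolution kernel, a tool not directly available for a general inhomogeneous $K(t,s)$. Plausible replacements are a contraction argument in the exponentially weighted norm $\|\phi\|_\lambda := \sup_t e^{-\lambda t}\phi(t)$ with $\lambda$ chosen large enough that $C_2\sup_t\int_0^t(1+\|K(t,s)\|^2)e^{-\lambda(t-s)}ds<1$, or an iteration of the integral inequality that controls the iterated kernels of $\|K(t,s)\|^2$ through Assumption~\ref{assumption:kernel_L_2} and absorbs the remainder into the a-priori bound $\phi_n\leq n^p$ before taking $n\to\infty$.
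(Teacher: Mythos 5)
Your localization and the core moment estimate follow the paper's proof closely: the stopping times $\tau_n$, the truncated process that agrees with $X$ on $\{t<\tau_n\}$ (the paper places the indicator inside the argument, writing $b(s,X_s 1_{\{s<\tau_n\}})$, rather than multiplying the integrand, but this is cosmetic), the Burkholder--Davis--Gundy/H\"older treatment of the stochastic integral and the Cauchy--Schwarz/Jensen treatment of the drift (the paper isolates the latter as Lemma~\ref{lem:fg_estimate_1}), and the resulting linear Volterra inequality $\phi_n(t)\leq c_1+c_1\int_0^t\|K(t,s)\|^2\phi_n(s)\,ds$ with constants independent of $n$. Up to that point your argument is sound and is essentially the paper's.

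The gap is precisely the step you yourself flag as ``the main obstacle'': neither of your proposed Gr\"onwall closures is carried out, and neither works as stated under Assumption~\ref{assumption:kernel_L_2} alone. For the exponentially weighted norm you would need $C_2\sup_{t}\int_0^t\|K(t,s)\|^2e^{-\lambda(t-s)}\,ds<1$ for some $\lambda$; but \ref{assumption:kernel_L_2} does not force this supremum to become small as $\lambda\to\infty$, since the mass of $s\mapsto\|K(t,s)\|^2$ may concentrate arbitrarily close to the diagonal $s=t$ (e.g.\ $\|K(t,s)\|^2=n\,1_{[t-1/n,t]}(s)$ on suitable $t$-sets keeps the weighted integral near $1$ for every fixed $\lambda$), so for $C_2>1$ no admissible $\lambda$ need exist. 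For the iteration alternative, the $N$-fold iterated kernel of $\|K(t,s)\|^2$ only admits the bound $M^N$ with $M=\sup_t\int_0^t\|K(t,s)\|^2ds$, which does not decay when $M\geq1$, so the remainder term is not absorbed. Moreover, your premise that resolvent techniques are ``not directly available for a general inhomogeneous $K(t,s)$'' is the opposite of what the paper does: it invokes the generalized Gronwall lemma \cite[Lemma 9.8.2]{Gripenberg1990Book} for scalar Volterra kernels of type $L^\infty$, constructs via \cite[Corollary 9.3.14]{Gripenberg1990Book} a nonpositive resolvent $\widetilde R$ of $-c_1\|K(t,s)\|^21_{s\leq t}$ on $[0,T)$, and combines the a priori bound $\phi_n\leq n^p$ with $\esssup_t\int_0^T|\widetilde R(t,s)|\,ds<\infty$ to obtain the $n$-uniform estimate before applying Fatou. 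To complete your proof you need either this non-convolution resolvent machinery or some concrete additional argument (or hypothesis on $K$) that closes the integral inequality; as written, the decisive step is missing.
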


\begin{proof}	
	Throughout the proof let $p \in (2,\infty)$. The case $p \in [1,2]$ then follows from Jensen's inequality. 
	 	
	For each $n \in \N$ we define the stopping time $\tau_n = \inf\{ t\geq 0 \colon \| X_t\| \geq n \} \wedge T$. 
	As in \cite[Lemma 3.1]{Jaber2019affine} we observe that for all $n \in \N$, $t\in[0,T]$ we have 
	\begin{equation*}
	\begin{split}
	\| X_t \|^p 1_{\{t < \tau_n\}} & \leq \left\| X_0 + \int_0^t K(t,s) b(s, X_s 1_{\{s < \tau_n\}}) ds + \int_0^t K(t,s) \sigma(s,X_s1_{\{s<\tau_n\}}) dW_s \right\|^p .
	\end{split}
	\end{equation*}
	It hence holds for all $n \in \N$, $t \in [0,T]$ that 
	\begin{equation}\label{eq:17082020a1}
	\begin{split}
	E\left[ \| X_t\|^p 1_{\{t<\tau_n\}}  \right] 
	& \leq  3^{p-1} \Bigg( E\left[ \| X_0 \|^p \right] + E\left[ \left\| \int_0^t K(t,s)b(s,X_s1_{\{s<\tau_n\}}) ds  \right\|^p \right] \\
	& \quad  + E\left[ \left\| \int_0^t K(t,s)\sigma(s,X_s1_{\{s<\tau_n\}}) dW_s  \right\|^p \right] \Bigg) . 
	\end{split}
	\end{equation} 
	By Lemma~\ref{lem:fg_estimate_1} and Fubini's theorem we have that for all $n \in \N$, $t \in [0,T]$
	\begin{equation}\label{eq:17082020a2}
	\begin{split}
	& E\left[ \left\| \int_0^t K(t,s)b(s,X_s1_{\{s<\tau_n\}}) ds  \right\|^p \right] \\
	& \leq d^{\frac{3p}{2}} T^{\frac{p}{2}} 
	\left( \int_0^t \|K(t,s)\|^2 ds \right)^{\frac{p}{2}-1} 
	\int_0^t \| K(t,s) \|^2 E\left[ \| b(s,X_s1_{\{s<\tau_n\}}) \|^p  \right] ds .
	\end{split}
	\end{equation} 
	Applying successively Jensen's inequality, the Burkholder-Davis-Gundy inequality, Hölder's inequality (with $\frac{p-2}{p} + \frac{2}{p} = 1$) and Fubini's theorem yields that there exists $c_p \in (0,\infty)$ depending only on $p$ such that for all $n \in \N$, $t \in [0,T]$ 	 
	\begin{equation}\label{eq:17082020a3}
	\begin{split}
	& E\left[ \left\| \int_0^t K(t,s)\sigma(s,X_s1_{\{s<\tau_n\}}) dW_s  \right\|^p \right] \\
	& \leq E\left[ \sup_{r \in [0,t]} \left\| \int_0^{r\wedge t} K(t,s)\sigma(s,X_s1_{\{s<\tau_n\}}) dW_s  \right\|^p \right] \\
	& \leq d^{\frac{3p}{2}} m^{p} c_p  E\left[ \left( \int_0^{t} \| K(t,s) \|^{2-\frac{4}{p}} \| K(t,s) \|^{\frac{4}{p}}  \|\sigma(s,X_s1_{\{s<\tau_n\}})\|^2 ds \right)^{\frac{p}{2}} \right] \\
	& \leq d^{\frac{3p}{2}} m^{p} c_p   
	\left( \int_0^t \| K(t,s) \|^2 ds \right)^{\frac{p}{2}-1}
	\int_0^{t} \| K(t,s) \|^2  E\left[ \|\sigma(s,X_s1_{\{s<\tau_n\}}) \|^p \right] ds. 
	\end{split}
	\end{equation}
	Recall that \ref{assumption:kernel_L_2} ensures that $\sup_{t \in [0,T]} \int_0^t \|K(t,s)\|^2 ds < \infty$.
	Observe furthermore that \ref{assumption:linear_growth} implies that for all $n \in \N$, $s \in [0,T]$ we have $\| b(s,X_s1_{\{s<\tau_n\}}) \|^p + \|\sigma(s,X_s1_{\{s<\tau_n\}}) \|^p \leq 2^{p-1} c_{LG}^p (1 + \| X_s1_{\{s<\tau_n\}} \|^p )$.
	It therefore follows from \eqref{eq:17082020a1}, \eqref{eq:17082020a2} and \eqref{eq:17082020a3} that there exists $c_1 \in (0,\infty)$ depending only on $T,d,m,\|X_0\|, K, c_{LG}, p$ such that for all $n \in \N$, $t \in [0,T]$ 	 
	\begin{equation}\label{eq:estimate_for_EX_02}
	\begin{split}
	E\left[ \| X_t\|^p 1_{\{t<\tau_n\}}  \right] & \leq c_1 + c_1 \int_0^t \|K(t,s)\|^2 E\left[ \| X_s1_{\{s<\tau_n\}} \|^p \right] ds .
	\end{split}
	\end{equation}
	
	As in \cite[Lemma 3.1]{Jaber2019affine} we want to apply the generalized Gronwall lemma \cite[Lemma 9.8.2]{Gripenberg1990Book} with kernel $\widetilde K\colon [0,T]\times [0,T] \to \R$, $\widetilde K(t,s) = c_1 \| K(t,s)\|^2 1_{s\leq t}$. 
	Note that $\widetilde K$ is a scalar Volterra kernel (see \cite[Definition 9.2.1]{Gripenberg1990Book}). 
	Due to \ref{assumption:kernel_L_2} and \cite[Proposition 9.2.7(i)]{Gripenberg1990Book} we find that $\widetilde K$ is of type $L^{\infty}$ (see \cite[Definition 9.2.2]{Gripenberg1990Book}), and for all $0\leq t_1<t_{2}\leq T$ with $t_{2}-t_1$ sufficiently small it holds for the norm defined in \cite[Definition 9.2.2]{Gripenberg1990Book} that 
	\begin{equation*}
	\begin{split}
	||| \widetilde K |||_{L^{\infty}([t_1,t_{2}))} & \leq c_1 \sup_{t \in [t_1,t_{2})} \int_{t_1}^t \| K(t,s) \|^2 ds 
	< 1.
	\end{split}
	\end{equation*}
	Therefore, \cite[Corollary 9.3.14]{Gripenberg1990Book} implies that $- \widetilde K$ has a resolvent of type $L^{\infty}$ on $[0,T)$ (see \cite[Definition 9.3.1]{Gripenberg1990Book}), which we denote by $\widetilde R$. 
	By \cite[Proposition 9.8.1]{Gripenberg1990Book} and nonpositivity of $- \widetilde K$ we obtain that also $\widetilde R$ is nonpositive. 	
	Moreover, observe that for all $n \in \N$ it holds by definition of $\tau_n$ that 
	$\sup_{t \in [0,T)} E\left[ \| X_t\|^p 1_{\{t<\tau_n\}}  \right] \leq n^p$. 
	The generalized Gronwall lemma \cite[Lemma 9.8.2]{Gripenberg1990Book} 
	hence implies that for all $n \in \N$ and almost all $t \in [0,T)$ 
	\begin{equation*}
	E\left[ \| X_t\|^p 1_{\{t<\tau_n\}}  \right] \leq c_1 -	\int_0^t \widetilde R(t,s) c_1 ds. 
	\end{equation*}
	This yields for all $n \in \N$ that 
	\begin{equation*}
	\begin{split}
	\esssup_{t \in [0,T]} E\left[ \| X_t\|^p 1_{\{t<\tau_n\}}  \right] 
	& \leq c_1 + c_1 \esssup_{t \in [0,T]} \int_0^T \lvert \widetilde R(t,s) \rvert ds.
	\end{split}
	\end{equation*}
	Observe that \cite[Proposition 9.2.7(i)]{Gripenberg1990Book} shows that $\esssup_{t \in [0,T]} \int_0^T \lvert \widetilde R(t,s) \rvert ds < \infty$. 
	Hence, there exists $c_2 \in (0, \infty)$ which only depends on $T,d,m,\|X_0\|, K, c_{LG}, p$ such that it holds
	$\esssup_{t \in [0,T]} E\left[ \| X_t\|^p 1_{\{t<\tau_n\}}  \right] \leq c_2$. 
	This together with \eqref{eq:estimate_for_EX_02} and \ref{assumption:kernel_L_2} implies that also 
	\begin{equation*}
	\sup_{t \in [0,T]} E\left[ \| X_t\|^p 1_{\{t<\tau_n\}}  \right] \leq c
	\end{equation*}
	for some constant $c \in (0, \infty)$ that only depends on $T,d,m,\|X_0\|, K, c_{LG}, p$.
	
	By an application of Fatou's Lemma and taking the limit $n\to\infty$, we obtain that 
	\begin{equation*}
	\sup_{t\in [0,T]} E\left[ \| X_t\|^p \right] \leq c .
	\end{equation*}
\end{proof}

The following result is a simple consequence of \eqref{eq:volterra_d_volt_inhom} that is used in the proof of Theorem~\ref{thm:multdim_volt_kts} in Section~\ref{sec:fully_inhom_volt_case_main_thm}.

\begin{lemma}\label{lem:XcondF_simple_multidim_kts}
	Assume that $b \in C([0,T]\times\R^d,\R^d)$ and $\sigma \in C([0,T]\times\R^d,\R^{d\times m})$.
	Suppose that \ref{assumption:kernel_L_2} and \ref{assumption:linear_growth} are satisfied and that $X=(X_t)_{t\in[0,T]}$ is a continuous weak solution of \eqref{eq:volterra_d_volt_inhom}. 
	Then, for all $s,t\in [0,T]$ we have that
	\begin{equation}\label{eq:XcondF_simple_multidim_kts}
	E[X_s|\cF_t]=X_0+\int_0^sK(s,r)E[b(r,X_r)|\cF_t]dr+\int_0^{t\wedge s}K(s,r)\sigma(r,X_r)dW_r.
	\end{equation}
\end{lemma}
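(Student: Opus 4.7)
The plan is to apply $E[\,\cdot\,|\cF_t]$ to both sides of \eqref{eq:volterra_d_volt_inhom} written at time $s$ and handle the drift integral and the stochastic integral separately. I first dispose of the trivial case $s \leq t$: here $t\wedge s = s$, the random variable $X_s$ is $\cF_s$-measurable so the left-hand side of \eqref{eq:XcondF_simple_multidim_kts} equals $X_s$, and for every $r \leq s \leq t$ the random variable $b(r,X_r)$ is $\cF_r$-measurable, so $E[b(r,X_r)|\cF_t] = b(r,X_r)$. Plugging these identities into the right-hand side and invoking \eqref{eq:volterra_d_volt_inhom} at time $s$ recovers $X_s$.

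Now assume $s > t$. For the drift integral I would use the conditional Fubini theorem to write
\begin{equation*}
E\!\left[\int_0^s K(s,r)\,b(r,X_r)\,dr \,\bigg|\, \cF_t\right] = \int_0^s K(s,r)\,E[b(r,X_r)|\cF_t]\,dr .
\end{equation*}
The required joint integrability follows from Cauchy--Schwarz, Assumption~\ref{assumption:linear_growth}, Assumption~\ref{assumption:kernel_L_2}, and Lemma~\ref{lem:moments_of_soln_bdd}:
\begin{equation*}
E \int_0^s \| K(s,r)\,b(r,X_r)\|\,dr \leq c_{LG} \left(\int_0^s \|K(s,r)\|^2 dr\right)^{\!1/2} \left(\int_0^s E[(1+\|X_r\|)^2]\,dr\right)^{\!1/2} < \infty .
\end{equation*}

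For the stochastic integral I would split $\int_0^s = \int_0^t + \int_t^s$. The first piece $\int_0^t K(s,r)\,\sigma(r,X_r)\,dW_r$ is $\cF_t$-measurable and thus passes through the conditional expectation unchanged, yielding the $t\wedge s = t$ term on the right-hand side. For the second piece I would consider the process $M_u := \int_t^u K(s,r)\,\sigma(r,X_r)\,dW_r$ for $u \in [t,s]$, which is a continuous local martingale starting at $0$. The It\^o isometry combined with Assumptions~\ref{assumption:kernel_L_2},~\ref{assumption:linear_growth} and Lemma~\ref{lem:moments_of_soln_bdd} gives
\begin{equation*}
E[\|M_s\|^2] = E\int_t^s \|K(s,r)\,\sigma(r,X_r)\|^2 dr \leq 2 c_{LG}^2 \int_t^s \|K(s,r)\|^2 (1+E[\|X_r\|^2])\,dr < \infty,
\end{equation*}
so $M$ is a true $L^2$-martingale on $[t,s]$ and $E[M_s|\cF_t] = M_t = 0$. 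Adding the three resulting terms yields \eqref{eq:XcondF_simple_multidim_kts}.

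The proof is essentially routine bookkeeping; the only points needing real care are the justification of the conditional Fubini interchange and the passage from local to true martingale for the stochastic integral on $[t,s]$, both of which reduce to the uniform moment bound supplied by Lemma~\ref{lem:moments_of_soln_bdd} together with the $L^2$-integrability of the kernel.
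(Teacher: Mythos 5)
Your proposal is correct and follows essentially the same route as the paper: the trivial case $s\le t$, Fubini for the drift term, and the observation that the stochastic integral is a true martingale because its expected quadratic variation is finite by \ref{assumption:kernel_L_2}, \ref{assumption:linear_growth} and Lemma~\ref{lem:moments_of_soln_bdd}. The only cosmetic difference is that the paper works with the single process $M^s_u=\int_0^{u\wedge s}K(s,r)\sigma(r,X_r)\,dW_r$ rather than splitting the integral at $t$, which is an equivalent bookkeeping choice.
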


\begin{proof}
	If $s<t$ we have by \eqref{eq:volterra_d_volt_inhom}
	\begin{equation*}
	\begin{split}
	E[X_s|\cF_t]&=X_s
	=X_0+\int_0^sK(s,r)E[b(r,X_r)|\cF_t]dr+\int_0^{t\wedge s}K(s,r)\sigma(r,X_r)dW_r.
	\end{split}
	\end{equation*}
	For every $s \in [0,T]$
	let $M^s=(M^s_t)_{t\in[0,T]}$ be the process defined by $M^s_t=\int_0^{t \wedge s} K(s,r)\sigma(r,X_r)dW_r$, $t\in[0,T]$. 
	By assumptions \ref{assumption:kernel_L_2}, \ref{assumption:linear_growth} and Lemma~\ref{lem:moments_of_soln_bdd} we have 
	\begin{equation}\label{eq:Mistruemartingalesincequadraticvariation}
	\begin{split}
	E\left[ \langle M^s\rangle_T \right] & = E\left[ \int_0^{T \wedge s} \|K(s,r) \sigma(r,X_r)\|^2 dr \right] 
	\leq \left( \sup_{t \in [0,T]} E\left[ \| \sigma(t,X_t) \|^2 \right] \right) \int_0^s \| K(s,r)\|^2 dr < \infty.
	\end{split}
	\end{equation}
	For every $s \in [0,T]$ the process $M^s$ is hence a martingale.
	
	Therefore, in the case $s\ge t$ it holds by \eqref{eq:volterra_d_volt_inhom} that 
	\begin{equation*}
	\begin{split}
	E[X_s|\cF_t] & = X_0+E\left[\int_0^sK(s,r)b(r,X_r)dr \,\bigg|\,  \cF_t \right] + E\left[\int_0^sK(s,r)\sigma(r,X_r)dW_r\,\bigg|\, \cF_t\right]\\
	& = X_0 + \int_0^sK(s,r)E[b(r,X_r)|\cF_t]dr + \int_0^{t\wedge s}K(s,r)\sigma(r,X_r)dW_r.
	\end{split}
	\end{equation*} 
\end{proof}

\section{Conditional Fourier-Laplace functional of affine Volterra processes}\label{sec:fully_inhom_volt_case_main_thm}

In this section we 
consider affine stochastic Volterra processes, i.e., solutions $X$ of~\eqref{eq:volterra_d_volt_inhom} with some state space $E\subseteq \R^d$ where the coefficients have the following affine structure.
\begin{assumption}\label{assumption:affine}
Suppose there exist  
$b^{i}\in C([0,T],\R^d)$, $i\in\{0,\ldots,d\}$, such that 
\begin{equation*}
b(s,x) = b^0(s) + \sum_{i=1}^d b^{i}(s)x_i, \quad s \in [0,T],\, x \in \R^d.
\end{equation*}
\end{assumption}
Furthermore, assume there exist $E\subseteq \R^d$ and $A^{i}\in C([0,T],\Sd)$, $i\in\{0,\ldots,d\}$, such that for $a\colon [0,T] \times \R^d \to \Sd$ defined by 
\begin{equation*}
a(s,x) = A^0(s) + \sum_{i=1}^d A^{i}(s) x_i, \quad s \in [0,T],\, x \in \R^d, 
\end{equation*}
it holds 
\begin{equation*}
\sigma(s,x)\sigma(s,x)^T = a(s,x), \quad s \in [0,T],\, x \in E.
\end{equation*}

\medskip

Denote $B(s)=(b^1(s) \dots b^d(s)) \in \R^{d\times d}$, $s \in [0,T]$. Then, $b(s,x)=b^0(s) + B(s)x$ for all $s \in [0,T]$, $x \in \R^d$.  
Furthermore, for any $s \in [0,T]$ and any row vector $v \in (\C^d)^*$
we write $A(s,v)=(vA^1(s)v^T,\ldots,vA^d(s)v^T)$. 
Note that $va(s,x)v^T = vA^0(s)v^T + A(s,v)x$ , $s \in [0,T]$, $x \in \R^d$, $v \in (\C^d)^*$. 

Observe that \ref{assumption:affine} implies that 
$b$ is continuous and has at most linear growth and that $\sigma$ on $E$ has at most linear growth.

The following result shows that the conditional Fourier-Laplace functional of an affine Volterra process can be represented in terms of a solution to a Riccati-Volterra equation. 
It extends \cite[Theorem 4.3]{Jaber2019affine} to time-dependent coefficients $b$ and $\sigma$ and kernels $K$ that are not necessarily convolution kernels.

\begin{theo}\label{thm:multdim_volt_kts}
	Suppose that 
	\ref{assumption:affine} holds true and that there exists an $E$-valued continuous weak solution $X=(X_t)_{t \in [0,T]}$ of \eqref{eq:volterra_d_volt_inhom}.
	Assume that \ref{assumption:kernel_L_2} and  \ref{assumption:linear_growth} are satisfied and that $\sigma \in C([0,T]\times\R^d,\R^{d\times m})$.    
	Fix $u\in (\C^d)^*$ and $f \in L^1\left( [0,T], (\C^d)^* \right)$.
	Let $\psi\in L^2\left( [0,T], (\C^d)^* \right)$ solve the Riccati-Volterra equation 
	\begin{equation}\label{eq:riccati_multdim_kts}
	\begin{split}
	\psi(t)&=uK(T,t)+\int_t^{T} \left(f(s) + \psi(s)B(s)+\frac{1}{2}A(s,\psi(s))\right) K(s,t)ds, \quad t \in [0,T].
	\end{split}
	\end{equation}
	Then the $\C$-valued process $Y=(Y_t)_{t \in [0,T]}$ defined by
	\begin{equation}\label{eq:def_Y_kts}
	\begin{split}
	Y_t&=Y_0+\int_0^t\psi(s)\sigma(s,X_s)dW_s-\frac{1}{2}\int_0^t\psi(s)a(s,X_s)\psi(s)^Tds, \quad t \in [0,T],\\
	Y_0&=uX_0+\int_0^T f(s)X_0 + \psi(s)b(s,X_0) + \frac{1}{2}\psi(s)a(s,X_0)\psi(s)^T ds
	\end{split}
	\end{equation}
	satisfies
	\begin{equation}\label{eq:alt_rep_Y_multdim_kts}
	Y_t=E\left[uX_T + \int_0^T f(s)X_sds \,\bigg|\, \cF_t \right] + \frac{1}{2}\int_t^T\psi(s)a(s,E[X_s|\cF_t])\psi(s)^Tds, \quad t \in [0,T].
	\end{equation}
	The process $\exp(Y)$ is a local martingale and if it is a true martingale it holds
	\begin{equation}\label{eq:conditional_Fourier_Laplace_formula}
	E\left[\exp\left(uX_T + \int_0^T f(s) X_s ds \right)\,\bigg|\, \cF_t \right]=\exp(Y_t), \quad t\in[0,T].
	\end{equation}
\end{theo}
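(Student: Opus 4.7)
The plan is to prove the alternative representation \eqref{eq:alt_rep_Y_multdim_kts} first, since the other two claims follow easily from it: one application of It\^o's formula to the holomorphic map $z \mapsto e^z$ and the identity
\begin{equation*}
d\langle Y\rangle_t = \psi(t)\sigma(t,X_t)\sigma(t,X_t)^T\psi(t)^T\,dt = \psi(t)a(t,X_t)\psi(t)^T\,dt
\end{equation*}
makes the $dt$-part of $d(e^{Y})$ vanish, so $e^Y$ is a local martingale; and if $e^Y$ is a true martingale, one evaluates \eqref{eq:alt_rep_Y_multdim_kts} at $t=T$ (where $E[X_s|\cF_T]=X_s$ kills the last integral, leaving only its lower endpoint), obtaining $Y_T = uX_T + \int_0^T f(s)X_s\,ds$, and then takes the conditional expectation $E[e^{Y_T}\mid \cF_t]=e^{Y_t}$.

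For the alternative representation itself, I would denote the right-hand side of \eqref{eq:alt_rep_Y_multdim_kts} by $Z_t$ and show $Z_t = Y_t$ by direct computation. Using the affine form of $b$ and Lemma~\ref{lem:XcondF_simple_multidim_kts} I would first expand
\begin{equation*}
E[X_s\mid \cF_t] = X_0 + \int_0^s K(s,r)\bigl(b^0(r)+B(r)E[X_r\mid \cF_t]\bigr)dr + \int_0^{t\wedge s} K(s,r)\sigma(r,X_r)\,dW_r
\end{equation*}
for $s\in\{T\}\cup[0,T]$ and substitute into the definition of $Z_t$. A (stochastic) Fubini exchange rewrites
\begin{equation*}
uE[X_T\mid\cF_t] + \int_0^T f(s)E[X_s\mid\cF_t]\,ds
\end{equation*}
as a $dr$-integral against the weight $h(r) = uK(T,r) + \int_r^T f(s)K(s,r)\,ds$, plus a corresponding Brownian integral on $[0,t]$ with the same weight $h$, plus the deterministic $uX_0 + \int_0^T f(s)X_0\,ds$.

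The crucial step is to invoke the Riccati--Volterra equation \eqref{eq:riccati_multdim_kts} in the equivalent form $h(r) = \psi(r) - \int_r^T\bigl(\psi(s)B(s)+\tfrac12 A(s,\psi(s))\bigr)K(s,r)\,ds$, and then to undo the same Fubini exchange on the correction term; this identifies the $\psi B$-contribution with $\int_0^T \psi(s)B(s)(E[X_s\mid\cF_t]-X_0)\,ds$ and the $A$-contribution with a similar expression. After adding the leftover $\tfrac12\int_t^T \psi(s)a(s,E[X_s\mid\cF_t])\psi(s)^T ds$ from the definition of $Z_t$, using $a(s,x) = A^0(s)+\sum_i A^i(s)x_i$ to rewrite $A(s,\psi(s))E[X_s\mid\cF_t] = \psi(s)a(s,E[X_s\mid\cF_t])\psi(s)^T - \psi(s)A^0(s)\psi(s)^T$, and collecting terms, all $E[X_s\mid\cF_t]$-dependent expressions on $[t,T]$ cancel and on $[0,t]$ they simplify using the adaptedness identity $E[X_s\mid\cF_t]=X_s$ for $s\le t$. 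The remaining pieces reassemble exactly into $Y_0 + \int_0^t \psi(s)\sigma(s,X_s)dW_s - \tfrac12\int_0^t \psi(s)a(s,X_s)\psi(s)^T ds = Y_t$.

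The main obstacle I expect is the bookkeeping in this cancellation and the rigorous justification of the Fubini/stochastic Fubini interchanges. Integrability of the deterministic double integrals follows from $\psi\in L^2$, $f\in L^1$, continuity of $B,A^i$, and Assumption~\ref{assumption:kernel_L_2}; the stochastic Fubini needs the $L^2(d\mathbb{P}\otimes dr)$-integrability of $(\int_r^T f(s)K(s,r)ds)\sigma(r,X_r)$ and its $\psi B$- and $A(s,\psi(s))$-analogues, which combine Assumption~\ref{assumption:kernel_L_2}, the linear growth Assumption~\ref{assumption:linear_growth}, and the uniform $L^p$-bounds on $X$ from Lemma~\ref{lem:moments_of_soln_bdd}. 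Once these technicalities are handled, the identity $Y_t=Z_t$ drops out of the affine structure, and the remainder of the theorem is It\^o's formula.
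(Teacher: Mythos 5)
Your proposal is correct and follows essentially the same route as the paper's proof: expand the conditional expectations via Lemma~\ref{lem:XcondF_simple_multidim_kts}, interchange the order of integration (deterministic and stochastic Fubini, the latter justified by integrability coming from \ref{assumption:kernel_L_2}, \ref{assumption:linear_growth}, $f\in L^1$ and $\psi\in L^2$), substitute the Riccati--Volterra equation \eqref{eq:riccati_multdim_kts}, and let the affine structure produce the cancellation, after which the local and true martingale claims follow from It\^o's formula and $Y_T=uX_T+\int_0^T f(s)X_s\,ds$. The only difference is organizational: the paper first verifies the identity at $t=0$ and then shows that the difference of the two sides is a $t$-independent constant, whereas you carry out the full cancellation in one pass; both amount to the same computation.
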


\begin{proof}
	As in the proof of \cite[Theorem 4.3]{Jaber2019affine} we 
	define the process $\widetilde Y$ by 
	\begin{equation*}
	\begin{split}
	\widetilde Y_t & = E\left[uX_T + \int_0^T f(s)X_sds \,\bigg|\, \cF_t\right] + \frac{1}{2}\int_t^T\psi(s)a(s,E[X_s|\cF_t])\psi(s)^Tds, \quad t \in [0,T],
	\end{split}
	\end{equation*}
	and we first show that $\widetilde Y_0 = Y_0$. 
	We have 
	\begin{equation*}
	\begin{split}
	\widetilde Y_0 - Y_0 
	& = uE[X_T-X_0 | \cF_0] + \int_0^T f(s) E[X_s|\cF_0] ds -\int_0^T f(s)X_0 ds - \int_0^T \psi(s)b(s,X_0)ds  \\
	& \quad + \frac{1}{2}\int_0^T\psi(s)a(s,E[X_s|\cF_0])\psi(s)^Tds 
	- \frac{1}{2} \int_0^T \psi(s)a(s,X_0)\psi(s)^T ds .
	\end{split}
	\end{equation*}
	Since $va(s,x)v^T = vA^0(s)v^T + A(s,v)x$ for all $s \in [0,T]$, $x \in \R^d$, $v \in (\C^d)^*$, it holds that 
	\begin{equation*}
	\begin{split}
	\int_0^T\psi(s)a(s,E[X_s|\cF_0])\psi(s)^Tds 
	- \int_0^T \psi(s)a(s,X_0)\psi(s)^T ds  
	& = \int_0^T A(s, \psi(s)) E[X_{s} - X_0|\cF_0] ds .
	\end{split}
	\end{equation*}
	Therefore, we obtain that 
	\begin{equation}\label{eq:ytilde0miny0multipre_kts}
	\begin{split}
	\widetilde Y_0 - Y_0 
	& = u E[X_T-X_0|\cF_0] + \int_0^T f(s) E[ X_{s} - X_0 |\cF_0]ds - \int_0^T \psi(s) b(s,X_0)ds \\
	& \quad + \frac12 \int_0^T A(s, \psi(s)) E[X_{s} - X_0|\cF_0] ds .
	\end{split}
	\end{equation}
	Moreover, since for any $s \in [0,T]$ the process $\int_0^{\cdot \wedge s} K(s,r)\sigma(r,X_r)dW_r$ is a true martingale (see~\eqref{eq:Mistruemartingalesincequadraticvariation}) and using Fubini's theorem, we have for all $t \in [0,T]$ that 
	\begin{equation}\label{eq:EXtminX0asConv_kts}
	\begin{split}
	E[X_t - X_0|\cF_0] & = E\left[ \int_0^t K(t,s)b(s,X_s)ds + \int_0^t K(t,s) \sigma(s,X_s)dW_s \,\bigg|\, \cF_0 \right]  \\
	& = \int_0^t K(t,s) E[b(s,X_s)|\cF_0] ds 
	= \int_0^t K(t,s) \left(b^0(s)+B(s)E[X_s|\cF_0]\right) ds \\ 
	& = \int_0^t K(t,s) b(s,E[X_s|\cF_0]) ds .
	\end{split}
	\end{equation}	
	Together with Fubini's theorem it follows that 
	\begin{equation*}
	\begin{split}
	& \int_0^T A(s,\psi(s)) E[X_s-X_0|\cF_0]ds 
	= \int_0^T A(s,\psi(s)) \left( \int_0^s K(s,r) b(r,E[X_r|\cF_0]) dr \right)  ds \\
	& = \int_0^T \int_0^s A(s,\psi(s)) K(s,r) b(r,E[X_r|\cF_0]) dr ds 
	= \int_0^T \int_r^T A(s,\psi(s)) K(s,r) b(r,E[X_r|\cF_0]) ds dr \\
	& = \int_0^T \int_r^{T} A(s,\psi(s)) K(s,r) ds \, b(r,E[X_r|\cF_0]) dr .
	\end{split}
	\end{equation*}
	The Riccati-Volterra equation~\eqref{eq:riccati_multdim_kts} then yields that 
	\begin{equation}\label{eq:kts001}
	\begin{split}
	& \frac12 \int_0^T A(s,\psi(s)) E[X_s-X_0|\cF_0]ds \\
	& = \int_0^T \left( \psi(r) - uK(T,r) 
	- \int_r^{T} \left( f(s) + \psi(s)B(s) \right) K(s,r)ds  \right) b(r,E[X_r |\cF_0])dr .
	\end{split}
	\end{equation}
	Observe furthermore that we have 
	\begin{equation}\label{eq:kts002}
	\begin{split}
	& \int_0^T \psi(s) b(s,X_0)ds 
	 = 
	\int_0^T \psi(s) b(s,E[X_s|\cF_0]) ds -\int_0^T \psi(s) B(s) E[X_s-X_0|\cF_0] ds
	\end{split}
	\end{equation}
	and by \eqref{eq:EXtminX0asConv_kts}
	\begin{equation}\label{eq:kts003}
	\begin{split}
	uE[X_T-X_0|\cF_0] & = u \int_0^T K(T,s) b(s,E[X_s|\cF_0]) ds .
	\end{split}
	\end{equation}
	By substituting \eqref{eq:kts001}, \eqref{eq:kts002} and \eqref{eq:kts003} into \eqref{eq:ytilde0miny0multipre_kts} we obtain 
	\begin{equation*}
	\begin{split}
	& \widetilde Y_0 -Y_0 \\
	& = u \int_0^T K(T,s) b(s,E[X_s|\cF_0]) ds  + \int_0^T f(s) E[ X_{s} - X_0 |\cF_0]ds \\
	& \quad - \int_0^T \psi(s) b(s,E[X_s|\cF_0]) ds + \int_0^T \psi(s) B(s) E[X_s-X_0|\cF_0] ds \\
	& \quad + \int_0^T \left( \psi(r) - uK(T,r) 
	- \int_r^{T} \left( f(s) + \psi(s)B(s) \right) K(s,r)ds  \right) b(r,E[X_r|\cF_0])dr \\
	& = \int_0^T \left(f(s)+\psi(s) B(s) \right) E[X_s-X_0|\cF_0] ds 
	 - \int_0^T \int_r^{T} \left( f(s) + \psi(s)B(s) \right) K(s,r)ds\, b(r,E[X_r|\cF_0])dr .
	\end{split}
	\end{equation*}
	It further follows from \eqref{eq:EXtminX0asConv_kts} and using Fubini's theorem that 
	\begin{equation*}
	\begin{split}
	\int_0^T \left(f(s)+\psi(s) B(s) \right) E[X_s-X_0|\cF_0] ds 
	& = \int_0^T \left( f(s)+\psi(s) B(s) \right) \left(  \int_0^s K(s,r) b(r,E[X_r|\cF_0]) dr \right) ds \\
	& = \int_0^T\int_r^T \left( f(s)+\psi(s) B(s) \right) K(s,r) b(r,E[X_r|\cF_0]) ds dr .
	\end{split}
	\end{equation*}	
	We have thus shown that $\widetilde Y_0 = Y_0$.  
	
	Next, in order to prove that $\widetilde Y = Y$, let $t \in [0,T]$ and observe that
	\begin{equation*}
	\begin{split}
	\widetilde Y_t & = E\left[uX_T + \int_0^T f(s)X_sds \,\bigg|\, \cF_t\right] + \frac{1}{2}\int_0^T \psi(s)a(s,E[X_s|\cF_t])\psi(s)^Tds \\
	& \quad - \frac{1}{2}\int_0^t \psi(s)a(s,E[X_s|\cF_t])\psi(s)^Tds \\
	& = uE[X_T|\cF_t] + \int_0^T f(s) E[X_s|\cF_t] ds + \frac{1}{2}\int_0^T \psi(s)A^0(s)\psi(s)^Tds \\
	& \quad + \frac12 \int_0^T A(s,\psi(s))E[X_s|\cF_t] ds - \frac{1}{2}\int_0^t \psi(s)a(s,X_s)\psi(s)^Tds .
	\end{split}
	\end{equation*}
	Let $C \in \C$ denote a quantity that does not depend on $t$ and note that we allow $C$ to change from line to line. 
	Then, we have 
	\begin{equation*}
	\begin{split}
	\widetilde Y_t - Y_t 
	& = C + uE[X_T|\cF_t] + \int_0^T f(s) E[X_s|\cF_t] ds + \frac12 \int_0^T A(s,\psi(s))E[X_s|\cF_t] ds \\
	& \quad  - \int_0^t\psi(s)\sigma(s,X_s)dW_s.
	\end{split}
	\end{equation*}
	It follows from Lemma~\ref{lem:XcondF_simple_multidim_kts} that 
	\begin{equation*}
	\begin{split}
	\widetilde Y_t - Y_t 
	& = C + \int_0^T f(s) E[X_s|\cF_t] ds + \int_0^T \left( uK(T,s)B(s) + \frac12 A(s,\psi(s)) \right) E[X_s|\cF_t] ds \\
	& \quad + \int_0^t \left(uK(T,s) - \psi(s) \right)\sigma(s,X_s)dW_s .
	\end{split}
	\end{equation*}
	We apply Lemma~\ref{lem:XcondF_simple_multidim_kts} once again to obtain 
	\begin{equation}\label{eq:tildeYtminYtmultidimpre_kts}
	\begin{split}
	\widetilde Y_t - Y_t 
	& = C + \int_0^T f(s) E[X_s|\cF_t] ds \\
	& \quad + \int_0^T \left( uK(T,s)B(s) + \frac12 A(s,\psi(s)) \right)
	\left( \int_0^s K(s,r)E[b(r,X_r)|\cF_t]dr \right) ds \\
	& \quad + \int_0^T \left( uK(T,s)B(s) + \frac12 A(s,\psi(s)) \right) \left(\int_0^{t\wedge s}K(s,r)\sigma(r,X_r)dW_r \right) ds \\
	& \quad + \int_0^t \left(uK(T,s) - \psi(s) \right)\sigma(s,X_s)dW_s .
	\end{split}
	\end{equation}
	Note that by the Riccati-Volterra equation~\eqref{eq:riccati_multdim_kts} it holds for all $s \in [0,t]$
	\begin{equation}\label{eq:riccati_rearranged_Tminr_multidim_ktr}
	\begin{split}
	uK(T,s) - \psi(s) 
	& = - \int_s^T  \left( f(r) + \psi(r) B(r) + \frac12 A(r,\psi(r)) \right) K(r,s) dr .
	\end{split}
	\end{equation}
	The stochastic Fubini theorem (see~\cite[Theorem 2.2]{Veraar2012stochFubini}) 
	implies that  
	\begin{equation}\label{eq:stochFubinikts}
	\begin{split}
	& \int_0^t \left(uK(T,s) - \psi(s) \right)\sigma(s,X_s)dW_s \\
	& = - \int_0^t \int_s^T  \left( f(r) + \psi(r) B(r) + \frac12 A(r,\psi(r)) \right) K(r,s) \sigma(s,X_s) dr dW_s \\
	& = - \int_0^T  \left( f(r) + \psi(r) B(r) + \frac12 A(r,\psi(r)) \right) \left( \int_0^{r\wedge t} K(r,s) \sigma(s,X_s) dW_s \right) dr.
	\end{split}
	\end{equation}
	To justify the application of this theorem, note that 
	for $\varphi\colon [0,T]\times [0,t] \times \R^d \to (\C^d)^*$, $\varphi(r,s,x) = \left( f(r) + \psi(r) B(r) + \frac12 A(r,\psi(r)) \right) K(r,s) \sigma(s,x) 1_{[0,r]}(s)$ with components $\varphi_k$, $k \in \{1,\ldots,m\}$, it holds 
	\begin{equation}\label{eq:estimateforjustofstochfubinikts}
	\begin{split}
	& \sum_{k=1}^m \left[ \int_0^T \left( \int_0^t (\Re \varphi_k(r,s,X_s))^2 ds \right)^{\frac12} dr + \int_0^T \left( \int_0^t (\Im \varphi_k(r,s,X_s))^2 ds \right)^{\frac12} dr \right] \\
	& \leq \sqrt{2m} \int_0^T \left( \int_0^t \| \varphi(r,s,X_s) \|^2 ds  \right)^{\frac12} dr \\
	& = \sqrt{2m} 
	 \int_0^T  \|f(r) + \psi(r) B(r) + \frac12 A(r,\psi(r))\| \left( \int_0^{t \wedge r} \|K(r,s)\|^2 \|\sigma(s,X_s)\|^2 ds \right)^{\frac12} dr \\
	& \leq \sqrt{2m} \left( \sup_{s \in [0,T]} \|\sigma(s,X_s)\| \right) \left( \sup_{r \in [0,T]} \int_0^r \|K(r,s)\|^2 ds \right)^{\frac12} \\
	& \quad \cdot \int_0^T \left( \|f(r)\| + \|\psi(r) B(r)\| + \frac12 \|A(r,\psi(r))\| \right) dr.
	\end{split}
	\end{equation}
	Since $\sigma$ is at most of linear growth and $X$ is continuous we obtain that $\sup_{s \in [0,T]} \|\sigma(s,X_s)\| < \infty$. 
	Furthermore, $\sup_{r \in [0,T]} \int_0^r \|K(r,s)\|^2 ds$ is finite by assumption \ref{assumption:kernel_L_2}. 
	Moreover, since $b^1,\ldots,b^d$ and $A^1,\ldots,A^d$ are continuous and hence bounded on $[0,T]$ (recall \ref{assumption:affine}), there exists $c \in (0,\infty)$ such that for all $r \in [0,T]$, $\|\psi(r)B(r)\| \leq  c  \|\psi(r)\|$ and $\|A(r,\psi(r))\| \leq c \|\psi(r)\|^2$. Now, we conclude from $f \in L^1\left( [0,T], (\C^d)^* \right)$ and $\psi\in L^2\left( [0,T], (\C^d)^* \right)$ that \eqref{eq:estimateforjustofstochfubinikts} is finite. We thus can apply~\cite[Theorem 2.2]{Veraar2012stochFubini} to each $\R$-valued integral $\int_0^t \int_0^T \Re \varphi_k(r,s,X_s)drdW_s^{(k)}$, $\int_0^t \int_0^T \Im \varphi_k(r,s,X_s)drdW_s^{(k)}$, $k \in \{1,\ldots,m\}$.
		
	Combining \eqref{eq:stochFubinikts} and  \eqref{eq:tildeYtminYtmultidimpre_kts} implies that 
	\begin{equation*}
	\begin{split}
	 \widetilde Y_t - Y_t 
	& = C + \int_0^T f(s) E[X_s|\cF_t] ds \\
	& \quad + \int_0^T \left( uK(T,s)B(s) + \frac12 A(s,\psi(s)) \right)
	\left( \int_0^s K(s,r)E[b(r,X_r)|\cF_t]dr \right) ds \\
	& \quad + \int_0^T \left( uK(T,s)B(s) -  f(s) - \psi(s) B(s) \right) \left(\int_0^{t\wedge s}K(s,r)\sigma(r,X_r)dW_r \right) ds .
	\end{split}
	\end{equation*}
	Observe that by Lemma~\ref{lem:XcondF_simple_multidim_kts} it holds for all $s \in [0,T]$
	\begin{equation*}
	\begin{split}
	\int_0^{t\wedge s}K(s,r)\sigma(r,X_r)dW_r 
	& = E[X_s | \cF_t ] - X_0 - \int_0^s K(s,r) E[b(r,X_r)|\cF_t] dr
	\end{split}
	\end{equation*}
	which implies 
	\begin{equation}\label{eq:kts004}
	\begin{split}
	\widetilde Y_t - Y_t  
	& = C + \int_0^T f(s) E[X_s|\cF_t] ds \\
	& \quad + \int_0^T \left( \frac12 A(s,\psi(s)) + f(s) + \psi(s) B(s) \right)
	\left( \int_0^s K(s,r)E[b(r,X_r)|\cF_t]dr \right) ds \\
	& \quad + \int_0^T \left( uK(T,s)B(s) -  f(s) - \psi(s) B(s) \right) E[X_s | \cF_t ] ds \\
	& = C + \int_0^T \left( \frac12 A(s,\psi(s)) + f(s) + \psi(s) B(s) \right)
	\left( \int_0^s K(s,r) B(r)E[X_r|\cF_t] dr \right) ds \\
	& \quad + \int_0^T \left( uK(T,s)B(s) - \psi(s) B(s) \right) E[X_s | \cF_t ] ds.	
	\end{split}
	\end{equation}	
	It further holds, using Fubini's theorem and subsequently the Riccati-Volterra  equation~\eqref{eq:riccati_multdim_kts}, that 
	\begin{equation}\label{eq:kts005}
	\begin{split}
	& \int_0^T \left( \frac12 A(s,\psi(s)) + f(s) + \psi(s) B(s) \right)
	\left( \int_0^s K(s,r) B(r)E[X_r|\cF_t] dr \right) ds \\
	& = \int_0^T \left( \int_r^T \left( \frac12 A(s,\psi(s)) + f(s) + \psi(s) B(s) \right) K(s,r) ds \right) 
	 B(r)E[X_r|\cF_t] dr \\
	& = \int_0^T \left( \psi(r) - uK(T,r) \right) B(r)E[X_r|\cF_t] dr .
	\end{split}
	\end{equation}
	It follows from \eqref{eq:kts004} and \eqref{eq:kts005} that $\widetilde Y_t -Y_t = C$.  
	The fact that $\widetilde Y_0 = Y_0$ implies that $C = 0$ and hence $\widetilde Y = Y$. 
	
	For the remaining claims, note that by~\eqref{eq:def_Y_kts}, $Y+\frac12 \langle Y \rangle$ is a local martingale and hence $\exp(Y)$ is a local martingale. 
	Since $Y_T = uX_T + \int_0^T f(s)X_sds$ (cf.~\eqref{eq:alt_rep_Y_multdim_kts}), we have $E\left[ \exp\left( uX_T + \int_0^T f(s)X_sds \right) \,\Big|\,  \cF_t \right] = \exp(Y_t)$ for all $t \in [0,T]$ if $\exp(Y)$ is a true martingale.
\end{proof}

In the special case where $K=\Id_d$ and where the coefficients $b$ and $\sigma$ are time-independent and exhibit an affine structure as in assumption~\ref{assumption:affine}, the solution $X$ of \eqref{eq:volterra_d_volt_inhom} is an affine diffusion process in the classical sense (see, e.g., \cite{filipovic2009affine} and \cite{Duffie2003affineProcFinance}). The conditional Fourier-Laplace transform of such an affine process, under appropriate regularity conditions, has an exponential-affine structure $\exp(\phi(T-t)+\chi(T-t)X_t)$, $t \in [0,T]$, where $\phi$ and $\chi$ are determined by the solution of a Riccati equation
(see e.g. \cite[Definition 2.1 and Theorem 2.2]{filipovic2009affine}). 
For affine stochastic Volterra processes with convolution kernels $K(t,s)=\ol K(t-s)1_{s\leq t}$, $s,t \in [0,T]$, and affine time-independent coefficients $b$ and $\sigma$ it is shown in \cite{Jaber2019affine} that at time zero such an exponential-affine structure  
$\exp(\phi(T)+\chi(T)X_0)$ holds true as well (see the discussion after \cite[Theorem 4.3]{Jaber2019affine}).  
Theorem \ref{thm:multdim_volt_kts} above implies that also for an inhomogeneous kernel $K$ and time-dependent coefficients $b$ and $\sigma$, the Fourier-Laplace transform at time zero still has this form.  
This is the content of the following remark. 
\begin{remark}\label{rem:def_phi_chi}
	Under the assumptions and with the notation of Theorem~\ref{thm:multdim_volt_kts} it holds for 
	$\phi\colon [0,T] \to \C$ and $\chi\colon [0,T]\to (\C^d)^*$ defined by 
	\begin{equation}\label{eq:def_phi_chi}
	\begin{split}
	\phi(t)&=\int_{T-t}^{T} \psi(s)b^0(s) + \frac{1}{2}\psi(s)A^0(s)\psi(s)^Tds, \quad t\in [0,T],\\
	\chi(t)&=u+\int_{T-t}^T f(s) + \psi(s)B(s) + \frac{1}{2}A(s,\psi(s))ds, \quad t\in [0,T],
	\end{split}
	\end{equation}
	that 
	\begin{equation}\label{eq:Y_0_in_terms_of_phi_chi}
	Y_0 = \phi(T) + \chi(T)X_0.
	\end{equation}
	It follows that if $e^Y$ is a true martingale, then   
	\begin{equation}\label{eq:time_zero}
	E\left[\exp\left(uX_T + \int_0^T f(s)X_sds\right)\right] 
	=\exp(\phi(T)+\chi(T)X_0).
	\end{equation}
\end{remark}

\section{Volterra processes with convolution kernel}
\label{sec:conv_kernel}
Throughout this section, we assume that the kernel $K \colon [0,T] \times [0,T] \to \R^{d\times d}$ in \eqref{eq:volterra_d_volt_inhom} is a convolution kernel, i.e., there exists $\ol K\colon [0,T] \to \R^{d\times d}$ such that $K(t,s)=\ol K(t-s) 1_{s\leq t}$ for all $s,t \in [0,T]$.
The stochastic Volterra equation~\eqref{eq:volterra_d_volt_inhom} thus becomes 
\begin{equation}\label{eq:time_dep_volterra_eq}
X_t=X_0+\int_0^t \ol K(t-s)b(s,X_s)ds+\int_0^t \ol K(t-s)\sigma(s,X_s)dW_s, \quad t\in [0,T].
\end{equation}

In the case of time-independent coefficients $b$ and $\sigma$ the paper \cite{Jaber2019affine} establishes existence results for \eqref{eq:time_dep_volterra_eq}.
In Subsection~\ref{sec:existence_of_soln_of_convolution_SDE} we apply these results to the time-space process $(t,X_t)^T$, $t \in [0,T]$, to obtain existence results for \eqref{eq:time_dep_volterra_eq} also in the setting with time-dependent $b$ and $\sigma$. 
Subsection~\ref{sec:represent_for_Y} combines the convolution case with the affine setting from Section~\ref{sec:fully_inhom_volt_case_main_thm}. In Theorem~\ref{thm:represent_for_Y} we establish that $Y$ from Theorem~\ref{thm:multdim_volt_kts} is an affine function of $X$. This generalizes \cite[Theorem 4.5]{Jaber2019affine} to time-dependent coefficients. 

We first draw some consequences from Theorem~\ref{thm:multdim_volt_kts} for the special case of a convolution kernel $K(t,s)=\ol K(t-s) 1_{s\leq t}$, $s,t \in [0,T]$.
\begin{defi}\label{rem:intro_resolvent_of_first_kind}
	$\ol K$ is said to admit a resolvent of the first kind if there exists an $\R^{d\times d}$-valued measure $\ol L$ on $[0,T]$ which is of bounded variation and satisfies $\ol K\star \ol L= \ol L\star \ol K=\Id_d$ (see also \cite[Definition 5.5.1]{Gripenberg1990Book}). 
\end{defi}
	
	There is at most one resolvent of the first kind for a given kernel $\ol K \in L^1\left( [0,T], \R^{d\times d} \right)$ (see \cite[Theorem 5.5.2]{Gripenberg1990Book}). Moreover,
	if a scalar kernel $k \in L^1([0,T],[0,\infty))$ is not identically zero and nonincreasing, it follows from \cite[Theorem 5.5.5]{Gripenberg1990Book} that $k$ possesses a resolvent of the first kind $L$ and $L$ is nonnegative. 

The next remark provides a reformulation of the Riccati-Volterra equation  \eqref{eq:riccati_multdim_kts}, if $\ol K$ admits a resolvent of the first kind.
\begin{remark}\label{rem:affine_volt_proc_conv}
	Suppose that $K(t,s)=\ol K(t-s) 1_{s\leq t}$, $s,t \in [0,T]$, and that \ref{assumption:kernel_L_2} and \ref{assumption:affine} for some $E\subseteq\R^d$ are satisfied,  
	and fix $u\in (\C^d)^*$, $f \in L^1\left( [0,T], (\C^d)^* \right)$. 		
	If $\ol K$ admits a resolvent of the first kind $\ol L$,  
	then the Riccati-Volterra equation  \eqref{eq:riccati_multdim_kts} is equivalent to 
	\begin{equation}
	\begin{split}
		\int_{[0,T-t]} \psi(t+s) \ol L(ds) & = u + \int_{t}^T f(s) + \psi(s)B(s) + \frac12 A(s,\psi(s))ds, \quad t\in [0,T].
	\end{split}
	\end{equation}
	Using the notation for convolutions, this can be written as 
	\begin{equation}\label{eq:riccati_multdim_kts_tilde_psi_star_L}
	\begin{split}
	(\psi(T-\cdot) \star \ol L)(t) & = u + \int_{T-t}^T f(s) + \psi(s)B(s) + \frac12 A(s,\psi(s))ds, \quad t\in [0,T].
	\end{split}
	\end{equation}
\end{remark}

We next examine further the subcase of a fractional kernel.
\begin{remark}\label{rem:frac_kernel}
	Let $\alpha_1,\ldots,\alpha_d\in (\frac{1}{2},1]$ and 
	let $\ol K=\diag(\ol K_1,\ldots,\ol K_d)$ with $\ol K_i(t)=\frac{t^{\alpha_i-1}}{\Gamma(\alpha_i)}$, $i\in \{1,\ldots,d\}$. 
	Define $\ol L=\diag(\ol L_1,\ldots,\ol L_d)$ to be the $\R^{d\times d}$-valued measure on $[0,T]$ which satisfies $\ol L_i(dt)=\frac{t^{-\alpha_i}}{\Gamma(1-\alpha_i)}dt$ if $\alpha_i<1$ and $\ol L_i(dt)=\delta_0(dt)$ if $\alpha_i=1$. 
	Then it holds that $\ol K\star \ol L = \ol L \star \ol K = \Id_d$ and hence $\ol L$ is the resolvent of the first kind of $\ol K$.

	Observe that for any $F \in L^2\left( [0,T], (\C^d)^* \right)$ it holds 
	\begin{equation}\label{eq:tilde_psi_star_L}
	(F \star \ol L)(t)=\left(I^{1-\alpha_1}F_1(t),\ldots,I^{1-\alpha_d}F_d(t) \right)=:(I^{1-\alpha} F)(t),
	\end{equation}
	where $I^\beta$ for $\beta \geq 0$ denotes the Riemann–Liouville integral operator. Recall also that for $\beta \geq 0$, $D^\beta=\frac{d}{dt}I^{1-\beta}$ denotes the Riemann-Liouville fractional derivative. 
	Let \ref{assumption:affine} hold true for some $E\subseteq \R^d$ 
	and fix $u\in (\C^d)^*$, $f \in L^1\left( [0,T], (\C^d)^* \right)$. 
	In view of \eqref{eq:tilde_psi_star_L} and  \eqref{eq:riccati_multdim_kts_tilde_psi_star_L} we have that $\psi \in L^2([0,T],(\C^d)^*)$ solves the  Riccati-Volterra equation \eqref{eq:riccati_multdim_kts} if and only if for all $i\in \{1,\ldots,d\}$, $t\in [0,T]$, it holds that
	\begin{equation}\label{eq:equi_riccati_frac}
	(D^{\alpha_i}\widetilde \psi_i)(t)
	=  f_i(T-t) + \widetilde \psi(t) b^i(T-t) + \frac{1}{2} \widetilde \psi(t) A^i(T-t) \widetilde \psi(t)^T, \quad (I^{1-\alpha_i} \widetilde \psi_i)(0)=u_i, 
	\end{equation}
	where $\widetilde\psi = \psi(T-\cdot)$.
	
	Suppose now that $\widetilde\psi \in L^2([0,T],(\C^d)^*)$ solves \eqref{eq:equi_riccati_frac}. Furthermore, assume that $\sigma \in C([0,T]\times\R^d,\R^{d\times m})$, that \ref{assumption:linear_growth} holds, that there exists an $E$-valued continuous weak solution $X=(X_t)_{t \in [0,T]}$ of \eqref{eq:time_dep_volterra_eq} and that $e^Y$ with $Y$ defined by \eqref{eq:def_Y_kts} is a true martingale. 
	Then, with $\phi$ defined by \eqref{eq:def_phi_chi}, it follows from \eqref{eq:time_zero}, \eqref{eq:riccati_multdim_kts_tilde_psi_star_L} and \eqref{eq:tilde_psi_star_L} that
	\begin{equation}\label{eq:time_zero_frac}
	E\left[\exp\left(uX_T + \int_0^T f(s)X_sds \right) \right] = 
	\exp\left(\phi(T)+(I^{1-\alpha} \widetilde \psi)(T) X_0 \right).
	\end{equation}
\end{remark}

\subsection{Existence of solutions in the convolution case}\label{sec:existence_of_soln_of_convolution_SDE}

In the present subsection we make the following assumption, where $\ol K_{ij}\colon [0,T] \to \R$, $i,j \in \{1,\ldots,d\}$, denote the components of the 
kernel $\ol K \colon [0,T] \to \R^{d\times d}$:
\begin{assumption}\label{assumption:kernel_order}
	There exist $\gamma \in (0,2]$ and $c \in (0,\infty)$ such that for all $i,j \in \{1,\ldots,d\}$ it holds $\int_s^t \left(\ol K_{ij}(t-u)\right)^2 du \leq c (t-s)^{\gamma}$ and $\int_0^s \left( \ol K_{ij}(t-u) - \ol K_{ij}(s-u) \right)^2 du \leq c (t-s)^{\gamma}$ for all $0\leq s< t\leq T$.
\end{assumption}
Assumption~\ref{assumption:kernel_order} guarantees that processes of the form $\int_0^{\cdot} \ol K(\cdot-s) \ol b_s ds + \int_0^{\cdot} \ol K(\cdot-s) \ol a_s dW_s$ have a Hölder continuous version under appropriate assumptions on the processes $\ol a$ and $\ol b$ (cf. \cite[Lemma 2.4]{Jaber2019affine}).
This result plays an important role in the proofs of the existence results (Theorems 3.3, 3.4 and 3.6)  in \cite{Jaber2019affine}.
Observe that for a convolution kernel, assumptions \ref{assumption:kernel_L_2} and \ref{assumption:kernel_order} together are the same as condition (2.5) in \cite{Jaber2019affine}.

In order to apply results from \cite{Jaber2019affine} we introduce the following coefficients that do not depend on time: 
Let $\widetilde b\colon \R^{d+1} \to \R^{d+1}$ be defined by $\widetilde b_1(x)=1$, $x \in \R^{d+1}$, and for $j \in \{2,\ldots d+1\}$
\begin{equation*}
\widetilde{b}_j(x) = 
\begin{cases}
b_{j-1}(x), & x \in [0,T] \times \R^d, \\
b_{j-1}(0,(x_2,\ldots,x_{d+1})^T), & x \in (-\infty,0) \times \R^d,\\
b_{j-1}(T,(x_2,\ldots,x_{d+1})^T), & x \in (T,\infty) \times \R^d.
\end{cases}
\end{equation*}
Furthermore, let $\widetilde \sigma\colon \R^{d+1} \to \R^{(d+1)\times m}$ be defined by $\widetilde \sigma_{1,k}(x)=0$, $x \in \R^{d+1}$, $k \in \{1,\ldots,m\}$, and for $j \in \{2,\ldots d+1\}$, $k \in \{1,\ldots,m\}$
\begin{equation*}
\widetilde{\sigma}_{j,k}(x) = 
\begin{cases}
\sigma_{j-1,k}(x), & x \in [0,T] \times \R^d, \\
\sigma_{j-1,k}(0,(x_2,\ldots,x_{d+1})^T), & x \in (-\infty,0) \times \R^d,\\
\sigma_{j-1,k}(T,(x_2,\ldots,x_{d+1})^T), & x \in (T,\infty) \times \R^d.
\end{cases}
\end{equation*}
We also define a convolution kernel $\widetilde K \colon [0,T] \to \R^{(d+1)\times (d+1)}$ by  
$\widetilde{K}_{i,j}(t) = \ol K_{i-1,j-1}(t)$, $i,j \in\{2,\ldots,d+1\}$, and $\widetilde K_{1,1}(t)=1$ and 
$\widetilde{K}_{1,j}(t)=0 = \widetilde{K}_{j,1}(t)$, $j\in\{2,\ldots,d+1\}$ for all $t \in [0,T]$. 
Moreover, let $\widetilde X_0 = (0,X_0)^T$. 
We then consider the $(d+1)$-dimensional homogeneous stochastic Volterra equation 
\begin{equation}\label{eq:volterra_eq_hom_coeff}
\widetilde X_t = \widetilde X_0 + \int_0^t \widetilde  K(t-s) \widetilde b(\widetilde X_s)ds + \int_0^t \widetilde K(t-s)\widetilde \sigma(\widetilde X_s)dW_s, \quad t\in [0,T].
\end{equation}

Note that if $\ol K$ satisfies \ref{assumption:kernel_L_2} and \ref{assumption:kernel_order}, the same holds true for $\widetilde{K}$.  
Furthermore, if $\ol K$ admits a resolvent of the first kind $\ol L$ (see Definition~\ref{rem:intro_resolvent_of_first_kind}), we can obtain a resolvent of the first kind for $\widetilde K$ by setting $\widetilde{L}_{1,1}=\delta_0$, $\widetilde L_{1,j} \equiv 0 \equiv \widetilde{L}_{j,1}$, $j \in \{2,\ldots,d+1\}$, and $\widetilde{L}_{i,j} = \ol L_{i-1,j-1}$, $i,j \in \{2,\ldots,d+1\}$. 
Observe furthermore that Lipschitz continuity of $b, \sigma$ passes on to $\widetilde b, \widetilde \sigma$. 
Moreover, if $b,\sigma$ are continuous and satisfy the linear growth condition \ref{assumption:linear_growth}, then also $\widetilde{b}, \widetilde{\sigma}$ are continuous with \ref{assumption:linear_growth}. 

Note that if there exists a continuous solution $\widetilde{X}$ of \eqref{eq:volterra_eq_hom_coeff}, its first component satisfies $\widetilde{X}_t^{(1)}=t$ for all $t \in [0,T]$. Therefore, we have for all $j \in \{2,\ldots,d+1\}$, $k \in \{1,\ldots,m\}$ that $\widetilde b_j(\widetilde X_s) = b_{j-1}(s, (\widetilde{X}_s^{(2)}, \ldots, \widetilde{X}_s^{(d+1)})^T)$ 
and 
$\widetilde \sigma_{j,k}(\widetilde X_s) = \sigma_{j-1,k}(s, (\widetilde{X}_s^{(2)}, \ldots, \widetilde{X}_s^{(d+1)})^T)$ 
for all $s \in [0,T]$. 
This implies that $(\widetilde{X}^{(2)}, \ldots, \widetilde{X}^{(d+1)})^T$ is a continuous solution of \eqref{eq:volterra_d_volt_inhom}.

\begin{propo}\label{propo:existence_strong_soln_sde_conv_Lip}
	Assume that $K(t,s)=\ol K(t-s) 1_{s\leq t}$, $s,t \in [0,T]$, and suppose that \ref{assumption:kernel_L_2} and \ref{assumption:kernel_order} are satisfied. 
	Let the coefficients $b,\sigma$ be Lipschitz continuous. 
	Then \eqref{eq:time_dep_volterra_eq} admits a unique continuous strong solution.
\end{propo}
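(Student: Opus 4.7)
The plan is to reduce the inhomogeneous equation \eqref{eq:time_dep_volterra_eq} to the augmented homogeneous equation \eqref{eq:volterra_eq_hom_coeff} in one higher dimension and then invoke the existence and uniqueness result for Lipschitz coefficients from \cite{Jaber2019affine} (the analogue of their Theorem~3.3 for Volterra SDEs with convolution kernel satisfying \ref{assumption:kernel_L_2} and \ref{assumption:kernel_order}).

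First I would check that the augmented data $(\widetilde{K}, \widetilde b, \widetilde\sigma, \widetilde X_0)$ fall within the scope of the cited existence theorem. The paper already observes that $\widetilde K$ inherits \ref{assumption:kernel_L_2} and \ref{assumption:kernel_order} from $\ol K$, since adding the block row/column with entries $1$ on the diagonal and $0$ elsewhere preserves the relevant $L^2$ and Hölder-type integral bounds. For Lipschitz continuity of $\widetilde b$ and $\widetilde\sigma$, the extension in the definition is obtained by composing $b$ and $\sigma$ with the 1-Lipschitz map $x_1 \mapsto (x_1\vee 0)\wedge T$ on the first coordinate, so the Lipschitz constants pass over to $\widetilde b, \widetilde\sigma$ on all of $\R^{d+1}$. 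Thus the cited theorem produces a unique continuous strong solution $\widetilde X = (\widetilde X^{(1)}, \ldots, \widetilde X^{(d+1)})^T$ of \eqref{eq:volterra_eq_hom_coeff}.

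Next I would identify the components. The first row of $\widetilde K$ is $(1,0,\ldots,0)$, $\widetilde b_1 \equiv 1$, $\widetilde \sigma_{1,k} \equiv 0$ and $\widetilde X_0^{(1)} = 0$, which forces
\begin{equation*}
\widetilde X_t^{(1)} = 0 + \int_0^t 1 \cdot 1\, ds + 0 = t, \quad t \in [0,T].
\end{equation*}
Substituting this back into the remaining $d$ equations, and using the definition of $\widetilde K$, $\widetilde b$, $\widetilde\sigma$ on $[0,T]\times\R^d$ (as noted in the paragraph preceding the proposition), yields that $X := (\widetilde X^{(2)}, \ldots, \widetilde X^{(d+1)})^T$ satisfies \eqref{eq:time_dep_volterra_eq}; hence a continuous strong solution exists.

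For uniqueness, suppose $X$ and $X'$ are two continuous strong solutions of \eqref{eq:time_dep_volterra_eq} on a given filtered probability space carrying the Brownian motion $W$. Then $(t,X_t)^T$ and $(t,X'_t)^T$ are two continuous $(\cF_t)$-adapted processes satisfying \eqref{eq:volterra_eq_hom_coeff} with the same initial value $\widetilde X_0$, because the first component trivially solves $\widetilde X^{(1)}_t = \int_0^t 1 \, ds$ and the remaining components recover the original equation. Uniqueness in \eqref{eq:volterra_eq_hom_coeff} from the cited theorem then forces $X = X'$.

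The main potential obstacle is simply confirming that the existence result being invoked from \cite{Jaber2019affine} indeed delivers strong existence and pathwise uniqueness under the combination of \ref{assumption:kernel_L_2}, \ref{assumption:kernel_order} and global Lipschitz coefficients; once that is in hand, the reduction via $(t, X_t)$ is essentially automatic, with the only delicate point being the faithful Lipschitz extension of the time-variable, handled by the clipping construction.
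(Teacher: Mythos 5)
Your proposal is correct and follows essentially the same route as the paper: the paper's proof likewise reduces to the augmented homogeneous system \eqref{eq:volterra_eq_hom_coeff} via the time-space process $(t,X_t)^T$ and invokes \cite[Theorem 3.3]{Jaber2019affine}, relying on the observations preceding the proposition that $\widetilde K$ inherits \ref{assumption:kernel_L_2} and \ref{assumption:kernel_order} and that Lipschitz continuity passes to $\widetilde b,\widetilde\sigma$. Your explicit treatment of the clipping map and of the transfer of uniqueness merely spells out details the paper leaves implicit.
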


\begin{proof}
	This holds true by \cite[Theorem 3.3]{Jaber2019affine} and the arguments preceding Proposition~\ref{propo:existence_strong_soln_sde_conv_Lip}.
\end{proof}

\begin{propo}\label{propo:existence_weak_soln_sde_conv}
	Assume that $K(t,s)=\ol K(t-s) 1_{s\leq t}$, $s,t \in [0,T]$, and suppose that \ref{assumption:kernel_L_2} and \ref{assumption:kernel_order} are satisfied. 
	Furthermore, assume that $\ol K$ admits a resolvent of the first kind $\ol L$. 	
	Suppose that $b \in C([0,T]\times\R^d,\R^d)$ and $\sigma \in C([0,T]\times\R^d,\R^{d\times m})$ satisfy \ref{assumption:linear_growth}.  
	Then \eqref{eq:time_dep_volterra_eq} admits a continuous weak solution.
\end{propo}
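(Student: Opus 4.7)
The plan is to apply the weak existence results of \cite{Jaber2019affine} for homogeneous convolution stochastic Volterra equations to the time–space lift \eqref{eq:volterra_eq_hom_coeff} constructed immediately before Proposition~\ref{propo:existence_strong_soln_sde_conv_Lip}, and then read off a solution of \eqref{eq:time_dep_volterra_eq} from the last $d$ coordinates. This is exactly the same reduction strategy used to prove Proposition~\ref{propo:existence_strong_soln_sde_conv_Lip}; only the invoked theorem from \cite{Jaber2019affine} has to be replaced by its weak-existence version (namely Theorem 3.4 there, which assumes continuity plus linear growth of the coefficients and the existence of a resolvent of the first kind for the convolution kernel).

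First I would verify that the lifted data fulfil the hypotheses of that weak-existence theorem. As recorded in the paragraphs preceding Proposition~\ref{propo:existence_strong_soln_sde_conv_Lip}, the block extension $\widetilde K$ inherits \ref{assumption:kernel_L_2} and \ref{assumption:kernel_order} from $\overline K$; the block-diagonal measure $\widetilde L$ is the resolvent of the first kind for $\widetilde K$ as soon as $\overline L$ is the resolvent of the first kind for $\overline K$; and the freezing construction of $\widetilde b$ and $\widetilde\sigma$ outside $[0,T]\times\R^d$ produces globally defined continuous functions on $\R^{d+1}$ which still satisfy the linear growth condition \ref{assumption:linear_growth} with the same (or a slightly enlarged) constant, since $b$ and $\sigma$ do so on $[0,T]\times\R^d$.

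Invoking \cite[Theorem 3.4]{Jaber2019affine} then yields an adapted continuous weak solution $\widetilde X = (\widetilde X^{(1)},\ldots,\widetilde X^{(d+1)})^{T}$ of \eqref{eq:volterra_eq_hom_coeff} on some filtered probability space supporting an $m$-dimensional Brownian motion $W$. To return to \eqref{eq:time_dep_volterra_eq} I inspect the first coordinate: because $\widetilde K_{1,1}\equiv 1$, $\widetilde K_{1,j}\equiv 0$ for $j\ge 2$, $\widetilde b_1\equiv 1$, $\widetilde\sigma_{1,k}\equiv 0$ and $\widetilde X_0^{(1)}=0$, the first scalar equation decouples completely and reads $\widetilde X_t^{(1)}=\int_0^t 1\,ds=t$ for every $t\in[0,T]$. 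Substituting $\widetilde X_s^{(1)}=s$ back into the definitions of $\widetilde b_j$ and $\widetilde\sigma_{j,k}$ for $j\ge 2$ recovers precisely $b_{j-1}(s,\cdot)$ and $\sigma_{j-1,k}(s,\cdot)$ on $s\in[0,T]$, so the projection $(\widetilde X^{(2)},\ldots,\widetilde X^{(d+1)})^{T}$ is the desired $E$-free continuous weak solution of \eqref{eq:time_dep_volterra_eq}.

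The only genuine obstacle is bookkeeping: one must quote the correct weak-existence statement from \cite{Jaber2019affine} and check that the nominal requirements on the lifted kernel, the lifted resolvent and the lifted coefficients all survive the freezing/extension. All of this has already been flagged in the discussion preceding Proposition~\ref{propo:existence_strong_soln_sde_conv_Lip}, so no new analytical work is needed beyond citing the earlier observations.
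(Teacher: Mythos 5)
Your proposal matches the paper's proof exactly: the paper also invokes \cite[Theorem 3.4]{Jaber2019affine} applied to the time--space lift \eqref{eq:volterra_eq_hom_coeff} and relies on the observations preceding Proposition~\ref{propo:existence_strong_soln_sde_conv_Lip} that $\widetilde K$, $\widetilde L$, $\widetilde b$, $\widetilde\sigma$ inherit the required properties and that the first coordinate satisfies $\widetilde X^{(1)}_t = t$. Your write-up simply makes these verifications explicit; no difference in substance.
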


\begin{proof}
	This follows from \cite[Theorem 3.4]{Jaber2019affine} and the arguments preceding Proposition~\ref{propo:existence_strong_soln_sde_conv_Lip}.
\end{proof}

\begin{propo}\label{propo:existence_weak_soln_sde_conv_nonneg}
	Assume that $K(t,s)=\ol K(t-s) 1_{s\leq t}$, $s,t \in [0,T]$, is diagonal and for each $i \in \{1,\ldots,d\}$, $\ol K_{ii}$ satisfies \ref{assumption:kernel_L_2} and \ref{assumption:kernel_order} and is nonnegative, not identically zero, nonincreasing and continuous on $(0,T]$, and its resolvent of the first kind $\ol L_{ii}$ is nonnegative and for all $t \in [0,T]$, the function $[0,T-t] \ni s \mapsto \ol L_{ii}([s,s+t])$ is nonincreasing. 	
	Furthermore, assume that $b \in C([0,T]\times\R^d,\R^d)$ and $\sigma \in C([0,T]\times\R^d,\R^{d\times m})$ satisfy \ref{assumption:linear_growth} and that for all $t \in [0,T]$, $i \in \{1,\ldots,d\}$, $k \in\{1,\ldots,m\}$ and all $x \in \R^d$ with $x_i=0$ it holds $b_i(t,x) \geq 0$ and $\sigma_{i,k}(t,x) = 0$. 	
	Assume that $X_0 \in [0,\infty)^d$. 
	Then \eqref{eq:time_dep_volterra_eq} admits a $[0,\infty)^d$-valued continuous weak solution.
\end{propo}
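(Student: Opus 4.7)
The plan is to apply the existence result \cite[Theorem 3.6]{Jaber2019affine} for homogeneous stochastic Volterra equations with nonnegativity-preserving coefficients to the augmented $(d+1)$-dimensional time-space system \eqref{eq:volterra_eq_hom_coeff} introduced before Proposition~\ref{propo:existence_strong_soln_sde_conv_Lip}, and then to project onto the last $d$ coordinates. The overall strategy is identical to that used in Propositions~\ref{propo:existence_strong_soln_sde_conv_Lip} and~\ref{propo:existence_weak_soln_sde_conv}; the additional work here lies in verifying that the structural conditions guaranteeing invariance of $[0,\infty)^{d+1}$ for $\widetilde X$ hold.

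First I would set $\widetilde X_0=(0,X_0)^T\in[0,\infty)^{d+1}$ (which is nonnegative since $X_0\in[0,\infty)^d$) and note that the augmented kernel $\widetilde K$ is diagonal: $\widetilde K_{1,1}\equiv 1$ and $\widetilde K_{i,i}=\ol K_{i-1,i-1}$ for $i\in\{2,\ldots,d+1\}$. The components $\widetilde K_{i,i}$ for $i\ge 2$ inherit \ref{assumption:kernel_L_2}, \ref{assumption:kernel_order}, nonnegativity, monotonicity, non-vanishing and continuity on $(0,T]$ directly from $\ol K_{i-1,i-1}$. The constant kernel $\widetilde K_{1,1}\equiv 1$ trivially satisfies all these properties. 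Its resolvent of the first kind is $\widetilde L_{1,1}=\delta_0$, which is nonnegative, and the map $[0,T-t]\ni s\mapsto\widetilde L_{1,1}([s,s+t])$ equals $1_{\{s=0\}}$ (once $t>0$), which is nonincreasing in $s$. For $i\ge 2$, $\widetilde L_{i,i}=\ol L_{i-1,i-1}$ inherits the required nonnegativity and monotonicity.

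Next I would verify the boundary conditions on $\widetilde b,\widetilde\sigma$. For $i=1$ we have $\widetilde b_1\equiv 1\ge 0$ and $\widetilde\sigma_{1,k}\equiv 0$ for all $k$, so the conditions on the first coordinate hold vacuously. For $j\in\{2,\ldots,d+1\}$ and $x\in\R^{d+1}$ with $x_j=0$, the definition of $\widetilde b_j,\widetilde\sigma_{j,k}$ together with the assumed inequality $b_i(t,\cdot)\ge 0$ and $\sigma_{i,k}(t,\cdot)=0$ on the coordinate hyperplane $\{y\in\R^d:y_i=0\}$ (applied at the time value $x_1\wedge T\vee 0\in[0,T]$) yields $\widetilde b_j(x)\ge 0$ and $\widetilde\sigma_{j,k}(x)=0$. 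Continuity and linear growth of $\widetilde b,\widetilde\sigma$ follow from the corresponding properties of $b,\sigma$ as already observed in the discussion preceding Proposition~\ref{propo:existence_strong_soln_sde_conv_Lip}.

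With all hypotheses checked, I would invoke \cite[Theorem 3.6]{Jaber2019affine} to obtain a $[0,\infty)^{d+1}$-valued continuous weak solution $\widetilde X$ of \eqref{eq:volterra_eq_hom_coeff}. Its first component satisfies $\widetilde X^{(1)}_t=t$ by the form of the first row of $\widetilde b,\widetilde\sigma,\widetilde K$, and, as spelled out before Proposition~\ref{propo:existence_strong_soln_sde_conv_Lip}, the process $X:=(\widetilde X^{(2)},\ldots,\widetilde X^{(d+1)})^T$ is then a continuous weak solution of \eqref{eq:time_dep_volterra_eq}; it takes values in $[0,\infty)^d$ since $\widetilde X$ does. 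The main obstacle in writing this out in detail is the verification that $\widetilde K_{1,1}\equiv 1$ with $\widetilde L_{1,1}=\delta_0$ genuinely satisfies the hypotheses of \cite[Theorem 3.6]{Jaber2019affine}, in particular the monotonicity of $s\mapsto\widetilde L_{1,1}([s,s+t])$, and to confirm that the piecewise-in-time extension used to define $\widetilde b,\widetilde\sigma$ outside $[0,T]$ preserves both continuity and the sign conditions on the coordinate hyperplanes; once these points are granted, the rest is a routine translation between the two formulations.
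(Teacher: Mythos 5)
Your proposal is correct and follows exactly the paper's own argument: the paper likewise verifies that the augmented kernel $\widetilde K$, resolvent $\widetilde L$, coefficients $\widetilde b,\widetilde\sigma$ and initial value $\widetilde X_0$ satisfy the hypotheses of \cite[Theorem 3.6]{Jaber2019affine} and then projects the resulting $[0,\infty)^{d+1}$-valued solution onto the last $d$ coordinates. Your write-up simply spells out the verification (in particular for the first component $\widetilde K_{1,1}\equiv 1$, $\widetilde L_{1,1}=\delta_0$) in more detail than the paper does.
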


\begin{proof}
	In this setting, similarly as above, the assumptions on $\ol K$, $\ol L$, $b$, $\sigma$ and $X_0$ imply that $\widetilde{K}$, $\widetilde{L}$, $\widetilde{b}$, $\widetilde{\sigma}$ and $\widetilde X_0$ satisfy the conditions of \cite[Theorem 3.6]{Jaber2019affine}. This yields the claim.
\end{proof}

\subsection{Affine Volterra processes with convolution kernel}\label{sec:represent_for_Y}

In this subsection we consider stochastic Volterra equations with a convolution kernel, where the coefficients moreover possess an affine structure as in \ref{assumption:affine}.

Recall that the conditional Fourier-Laplace transform at time $t \in [0,T]$ of a classical affine diffusion process $X$ is exponential-affine in $X_t$ (see the discussion after Theorem~\ref{thm:multdim_volt_kts}). 
This property does not 
generalize to affine Volterra processes since these processes are typically not Markovian. 
In \cite[Theorem 4.3 and Theorem 4.5]{Jaber2019affine} it is shown that the conditional Fourier-Laplace transform at time $t \in [0,T]$ of an affine stochastic Volterra process $X$ with convolution kernel $K(t,s)=\ol K(t-s)1_{s\leq t}$, $s,t\in[0,T]$, and time-independent coefficients $b$ and $\sigma$ has an exponential-affine structure in the past trajectory $(X_s)_{s\in[0,t]}$.
Theorem~\ref{thm:represent_for_Y} below combined with Theorem~\ref{thm:multdim_volt_kts} extends this result to the case of time-dependent coefficients. 
Representation~\eqref{eq:repres_Y_conv_case} for $Y$ in Theorem~\ref{thm:represent_for_Y} below is convenient for proving that in the inhomogeneous Volterra-Heston model (see Section~\ref{sec:inhom_volt_heston_model}), $e^Y$ is a true martingale (see Proposition~\ref{propo:exp_Y_is_true_martingale_heston}).

\begin{theo}\label{thm:represent_for_Y}
	Let $K(t,s)=\ol K(t-s) 1_{s\leq t}$, $s,t \in [0,T]$. 
	Suppose that 
	\ref{assumption:affine} holds true and that there exists an $E$-valued continuous weak solution $X=(X_t)_{t \in [0,T]}$ of  \eqref{eq:time_dep_volterra_eq}. 
	Assume that \ref{assumption:kernel_L_2} and  \ref{assumption:linear_growth} are satisfied and that $\sigma \in C([0,T]\times\R^d,\R^{d\times m})$. 
	Furthermore, assume that $\ol K$ is continuous on $(0,T]$ and admits a resolvent of the first kind $\ol L$ and that $\sup_{r \leq T} \| (\Delta_r \ol K) \star \ol L \|_{TV} < \infty$. 
	Fix $u\in (\C^d)^*$ and $f \in L^1\left( [0,T], (\C^d)^* \right)$.
	Let $\psi\in L^2\left( [0,T], (\C^d)^* \right)$ solve the Riccati-Volterra equation 
	\eqref{eq:riccati_multdim_kts} and let $Y$ be defined by \eqref{eq:def_Y_kts}. 
	\begin{itemize}
		\item[(i)]  
		It holds that $\psi(t-\cdot)\star \ol L \colon [0,t] \to (\C^d)^*$ is right-continuous and of bounded variation, and that 	
		\begin{equation}\label{eq:repres_Y_cov_case_early}
			\begin{split}
				Y_t & = \left( \int_{[0,T]} \psi(s) \ol L(ds) \right) X_0 + 
				\psi(t) \ol L(\{0\})X_t - 
				\left( \int_{[0,t]} \psi(s) \ol L(ds) \right) X_0 \\
				& \quad + \int_{[0,t]} d\left( \psi(t-\cdot) \star \ol L \right)(s) X_{t-s}  
				+ \int_t^T \psi(s) b^0(s) ds \\
				& \quad + \frac12 \int_t^T \psi(s) A^0(s) \psi(s)^T ds 
				- \int_0^t \left( \psi(s) B(s) + \frac12 A(s,\psi(s)) \right)X_s ds, \quad t \in [0,T].
			\end{split}
		\end{equation}
	
		\item[(ii)] 			  
		For all $t \in [0,T]$ let
		\begin{equation}\label{eq:def_pi_h}
			g_t \colon [0,t] \to (\C^d)^*, \quad 
			g_t(r) = - \int_{(r,T-t+r]} \psi(t-r+s) \ol L(ds), \quad r \in [0,t].
		\end{equation}
		It then holds for all $t \in [0,T]$ that $g_t\colon [0,t] \to (\C^d)^*$ is right-continuous and of bounded variation, and that  
		\begin{equation}\label{eq:repres_Y_conv_case}
			\begin{split}
				Y_t & = -g_t(t) X_0 + \psi(t) \ol L(\{0\})X_t +  \int_{[0,t]} (dg_t(s)) X_{t-s} + \int_0^t f(s)X_sds \\
				& \quad 
				+ \int_t^T \psi(s)b^0(s)ds + \frac12 \int_t^T \psi(s) A^0(s) \psi(s)^T ds, \quad t \in [0,T].
			\end{split}
		\end{equation}
	\end{itemize}
\end{theo}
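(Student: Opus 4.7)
The plan is to start from the identity \eqref{eq:alt_rep_Y_multdim_kts} of Theorem~\ref{thm:multdim_volt_kts} and exploit the convolution structure: the resolvent of the first kind $\ol L$ inverts $\ol K$ via $\ol K\star\ol L=\Id_d$, which lets us rewrite future-tail $\ol K$-convolutions appearing in \eqref{eq:alt_rep_Y_multdim_kts} as Stieltjes integrals against the past trajectory $(X_s)_{s\in[0,t]}$. This is exactly what makes the conditional Fourier--Laplace functional exponential-affine in the past path.

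For part~(i) I would expand the affine decomposition $\psi(s)a(s,y)\psi(s)^T = \psi(s)A^0(s)\psi(s)^T + A(s,\psi(s))y$, split every $s$-integral at $t$, and apply Lemma~\ref{lem:XcondF_simple_multidim_kts} to $E[X_s|\cF_t]$ for $s>t$. This yields a closed-form linear Volterra equation for $E[X_\cdot|\cF_t]$ on $[t,T]$ driven by the past data $(X_s)_{s\leq t}$ and the stochastic integral $\int_0^t \ol K(s-r)\sigma(r,X_r)dW_r$. Using Fubini (classical and stochastic, justified exactly as in the proof of Theorem~\ref{thm:multdim_volt_kts}), the coefficients of $dW_r$ and $dr$ for $r\in[0,t]$ that emerge from rearranging the integrals become $u\ol K(T-r)+\int_r^T (f(s)+\psi(s)B(s)+\tfrac12 A(s,\psi(s)))\ol K(s-r)\,ds$, which equals $\psi(r)$ by \eqref{eq:riccati_multdim_kts}; the stochastic part therefore recombines into the Itô integral appearing in the defining SDE~\eqref{eq:def_Y_kts} for $Y$. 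Convolving the remaining deterministic future-tail integrals of $\psi$ with $\ol L$ on the left and using $\ol K\star\ol L=\Id_d$ cancels the $\ol K$-factors and produces the boundary term $\psi(t)\ol L(\{0\})X_t$, the two $X_0$-contributions $\int_{[0,T]}\psi(s)\ol L(ds)\,X_0$ and $\int_{[0,t]}\psi(s)\ol L(ds)\,X_0$, the drift correction $-\int_0^t(\psi(s)B(s)+\tfrac12 A(s,\psi(s)))X_s\,ds$, and the Stieltjes integral $\int_{[0,t]}d(\psi(t-\cdot)\star\ol L)(s)X_{t-s}$.

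Regularity of $\psi(t-\cdot)\star\ol L$ rests on the decomposition $\psi = u\ol K(T-\cdot) + \ol K\star h$ with $h := f+\psi B+\tfrac12 A(\cdot,\psi)\in L^1$, obtained from \eqref{eq:riccati_multdim_kts} using $f\in L^1$, $\psi\in L^2$, boundedness of $B$ and the quadratic form of $A(s,\cdot)$. Convolving on the left with $\ol L$ and using $\ol K\star\ol L=\Id_d$, every contribution reduces to expressions built from the shifted convolutions $(\Delta_r\ol K)\star\ol L$, whose total variation is controlled uniformly in $r$ by the standing hypothesis $\sup_{r\leq T}\|(\Delta_r\ol K)\star\ol L\|_{TV}<\infty$; right-continuity is fixed by the convention stated in the Notation section. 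Part~(ii) is then a purely algebraic rearrangement of (i): using \eqref{eq:def_pi_h} together with \eqref{eq:riccati_multdim_kts_tilde_psi_star_L}, the combination of $\int_{[0,t]}d(\psi(t-\cdot)\star\ol L)(s)X_{t-s}$, the two $X_0$ boundary terms and $-\int_0^t(\psi(s)B(s)+\tfrac12 A(s,\psi(s)))X_s\,ds$ is identified with $-g_t(t)X_0 + \int_{[0,t]}dg_t(s)X_{t-s} + \int_0^t f(s)X_s\,ds$, while $\psi(t)\ol L(\{0\})X_t$ and the two deterministic future integrals in $\int_t^T$ carry over unchanged.

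The technical bottleneck is the rigorous treatment of the Stieltjes integral $\int_{[0,t]}d(\psi(t-\cdot)\star\ol L)(s)X_{t-s}$: establishing bounded variation of $\psi(t-\cdot)\star\ol L$ and identifying its distributional derivative so that the exchange between $\ol K$-convolutions on the future side and $\ol L$-Stieltjes integrals on the past side is licit. I expect this to require approximating $h$ (and hence $\psi$) by step functions, performing the interchanges using the uniform total-variation estimate $\sup_{r\leq T}\|(\Delta_r\ol K)\star\ol L\|_{TV}<\infty$, and then passing to the limit by dominated convergence in the Stieltjes integrand.
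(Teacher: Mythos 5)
Your proposal is essentially sound and identifies the two key mechanisms the paper also uses: the Riccati equation in resolvent form \eqref{eq:riccati_multdim_kts_tilde_psi_star_L} to control $\psi(t-\cdot)\star\ol L$ (your decomposition $\psi=u\ol K(T-\cdot)+\ol K\star h$ with $h=f+\psi B+\tfrac12 A(\cdot,\psi)\in L^1$, reduced via $\ol K\star\ol L=\Id_d$ to the shifted convolutions $(\Delta_r\ol K)\star\ol L$, is exactly the paper's part 2), and the passage from $\ol K$-convolutions to Stieltjes integrals against the past path. Where you diverge is the starting point: you begin from the conditional-expectation representation \eqref{eq:alt_rep_Y_multdim_kts} and Lemma~\ref{lem:XcondF_simple_multidim_kts}, which forces you to re-run the Fubini manipulations of Theorem~\ref{thm:multdim_volt_kts} in reverse until ``the stochastic part recombines into the It\^o integral in \eqref{eq:def_Y_kts}'' --- at which point you are back at the SDE definition of $Y$, which is where the paper starts. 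The paper instead writes $Y_t$ directly from \eqref{eq:def_Y_kts}, uses the affine structure to recognize $\int_0^t\psi\,b(s,X_s)ds+\int_0^t\psi\,\sigma(s,X_s)dW_s=(\psi(t-\cdot)\star dZ)(t)$ with $Z=\int_0^\cdot b(s,X_s)ds+\int_0^\cdot\sigma(s,X_s)dW_s$, and then invokes the convolution integration-by-parts formula \cite[Lemma 2.6]{Jaber2019affine} to obtain $(\psi(t-\cdot)\star\ol L)(0)X_t-(\psi(t-\cdot)\star\ol L)(t)X_0+(d(\psi(t-\cdot)\star\ol L)\star X)(t)$ in one stroke. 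This also dissolves what you flag as the ``technical bottleneck'': no step-function approximation is needed, because once right-continuity and bounded variation of $\psi(t-\cdot)\star\ol L$ are established (which you do correctly), the interchange between future-side $\ol K$-convolutions and past-side $\ol L$-Stieltjes integrals is precisely the content of that cited lemma. Your treatment of part~(ii) as an algebraic rearrangement via \eqref{eq:g_t_as_conv}-type identities and \eqref{eq:riccati_multdim_kts_tilde_psi_star_L} matches the paper; note only that the measure coming from $r\mapsto(\psi(T-\cdot)\star\ol L)(T-t+r)$ is what converts the drift correction $-\int_0^t(\psi B+\tfrac12 A(\cdot,\psi))X_s\,ds$ into the clean term $\int_0^tf(s)X_s\,ds$, so that cancellation should be made explicit.
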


\begin{proof}
1. Note first that for all $t \in [0,T]$ it holds  
\begin{equation}\label{eq:g_t_as_conv}
g_t = \psi(t-\cdot) \star \ol L - (\psi(T-\cdot) \star \ol L)(T-t+\cdot),
\end{equation}
	and that we prove in part 2 below that for all $t \in [0,T]$ the function $\psi(t-\cdot)\star \ol L \colon [0,t] \to (\C^d)^*$ is right-continuous and of bounded variation. Since \eqref{eq:riccati_multdim_kts_tilde_psi_star_L} shows that $\psi(T-\cdot) \star \ol L$ is continuous and of bounded variation, it then follows that for all $t \in [0,T]$ also $g_t \colon [0,t] \to (\C^d)^*$ is right-continuous and of bounded variation.
	
	To derive the expressions \eqref{eq:repres_Y_cov_case_early} and  \eqref{eq:repres_Y_conv_case} for $Y$, observe that  we obtain from the definition \eqref{eq:def_Y_kts} of $Y_0$ together with Remark \ref{rem:affine_volt_proc_conv} that \begin{equation*}
	\begin{split}
	Y_0 
	& = (\psi(T-\cdot) \star \ol L)(T) X_0 + \int_0^T \psi(s) b^0(s) ds + \frac12 \int_0^T \psi(s) A^0(s) \psi(s)^T ds.
	\end{split}
	\end{equation*}
	Inserting this in the definition \eqref{eq:def_Y_kts} of $Y$ yields for all $t \in [0,T]$
	\begin{equation}\label{eq:repres_Y_a02}
	\begin{split}
	Y_t & = (\psi(T-\cdot) \star \ol L)(T) X_0 + \int_0^T \psi(s) b^0(s) ds + \frac12 \int_0^T \psi(s) A^0(s) \psi(s)^T ds \\
	& \quad + \int_0^t \psi(s) \sigma(s,X_s) dW_s - \frac12 \int_0^t \psi(s) a(s,X_s) \psi(s)^T ds \\ 
	& = (\psi(T-\cdot) \star \ol L)(T) X_0 + \int_0^t \psi(s) b(s,X_s) ds + \int_0^t \psi(s) \sigma(s,X_s) dW_s 
	+ \int_t^T \psi(s) b^0(s) ds \\
	& \quad + \frac12 \int_t^T \psi(s) A^0(s) \psi(s)^T ds 
	- \int_0^t \left( \psi(s) B(s) + \frac12 A(s,\psi(s)) \right)X_s ds . 
	\end{split}
	\end{equation}
	Let $Z=\int_0^{\cdot} b(s,X_s) ds + \int_0^{\cdot} \sigma(s,X_s) dW_s$. It then holds for all $t \in [0,T]$ 
	\begin{equation}\label{eq:repres_Y_a03}
	\left(\psi(t-\cdot) \star dZ\right)(t) = \int_0^t \psi(s) b(s,X_s) ds + \int_0^t \psi(s) \sigma(s,X_s) dW_s. 
	\end{equation}
	Since for all $t \in [0,T]$ the function $\psi(t-\cdot)\star \ol L \colon [0,t] \to (\C^d)^*$ is right-continuous and of bounded variation (see part 2 below), \cite[Lemma 2.6]{Jaber2019affine} shows that for all $t \in [0,T]$ 
	\begin{equation}\label{eq:repres_Y_a04}
	\begin{split}
	(\psi(t-\cdot) \star dZ)(t) & = \left( \psi(t-\cdot) \star \ol L \right)(0)X_t - \left( \psi(t-\cdot) \star \ol L \right)(t) X_0 \\
	& \quad + \left(d\left( \psi(t-\cdot) \star \ol L \right)\star X \right)(t).
	\end{split}
	\end{equation}
	We combine \eqref{eq:repres_Y_a02}, \eqref{eq:repres_Y_a03} and \eqref{eq:repres_Y_a04} to obtain for all $t \in [0,T]$
	\begin{equation*}
	\begin{split}
	Y_t & = (\psi(T-\cdot) \star \ol L)(T) X_0 + 
	\psi(t) \ol L(\{0\})X_t - \left( \psi(t-\cdot) \star \ol L \right)(t) X_0 \\
	& \quad + \left(d\left( \psi(t-\cdot) \star \ol L \right)\star X \right)(t) 
	+ \int_t^T \psi(s) b^0(s) ds \\
	& \quad + \frac12 \int_t^T \psi(s) A^0(s) \psi(s)^T ds 
	- \int_0^t \left( \psi(s) B(s) + \frac12 A(s,\psi(s)) \right)X_s ds.
	\end{split}
	\end{equation*}
	This proves \eqref{eq:repres_Y_cov_case_early}. Furthermore, 
	observe that by \eqref{eq:g_t_as_conv} we have for all $t \in [0,T]$ 
	\begin{equation*}
	(\psi(T-\cdot) \star \ol L)(T) X_0 - \left( \psi(t-\cdot) \star \ol L \right)(t)X_0 = -g_t(t)X_0
	\end{equation*}
	and 
	\begin{equation*}
	d\left( \psi(t-\cdot) \star \ol L \right) = dg_t + \mu_t
	\end{equation*}
	where $\mu_t$ denotes the measure associated to $r \mapsto (\psi(T-\cdot) \star \ol L)(T-t+r)$. 
	It therefore follows for all $t \in [0,T]$
	\begin{equation}\label{eq:repres_Y_a06}
	\begin{split}
	Y_t & = -g_t(t) X_0 + 
	\psi(t) \ol L(\{0\})X_t + \left( (dg_t) \star X \right)(t) + \left( \mu_t \star X \right)(t) 
	+ \int_t^T \psi(s) b^0(s) ds \\
	& \quad + \frac12 \int_t^T \psi(s) A^0(s) \psi(s)^T ds 
	- \int_0^t \left( \psi(s) B(s) + \frac12 A(s,\psi(s)) \right)X_s ds.
	\end{split}
	\end{equation}
	Note that by \eqref{eq:riccati_multdim_kts_tilde_psi_star_L} the measure $\mu_t$ for each $t \in [0,T]$ is given by
	\begin{equation*}
 	\mu_t((r,l])
 	=  \int_{r}^{l} f(t-s) + \psi(t-s) B(t-s) + \frac12 A(t-s,\psi(t-s)) ds, \quad r,l \in [0,t], r<l.
	\end{equation*}
	It thus holds that 
	\begin{equation*}
	\begin{split}
	\int_{[0,t]} \mu_t(ds) X_{t-s} 
	& = \int_0^t \left( f(t-s) + \psi(t-s)  B(t-s) + \frac12 A(t-s,\psi(t-s)) \right) X_{t-s} ds \\
	& = \int_0^t \left( f(r) + \psi(r) B(r) + \frac12 A(r,\psi(r)) \right) X_r dr, \quad t \in [0,T].
	\end{split}
	\end{equation*}
	Substituting this in \eqref{eq:repres_Y_a06} yields \eqref{eq:repres_Y_conv_case}.
	
2. It remains to prove that for all $t \in [0,T]$ the function $\psi(t-\cdot)\star \ol L \colon [0,t] \to (\C^d)^*$ is right-continuous and of bounded variation.	
	Note that \eqref{eq:riccati_multdim_kts_tilde_psi_star_L} shows that $\psi(T-\cdot) \star \ol L$ is continuous and of bounded variation. We therefore fix $t \in [0,T)$ for the remainder of the proof. 
	Denote  
	\begin{equation*}
	G\colon [0,T] \to (\C^d)^*, \quad G(s) =  f(s) + \psi(s) B(s)+\frac{1}{2}A(s,\psi(s)), \quad s \in [0,T]. 
	\end{equation*}
	It then holds by the Riccati-Volterra equation \eqref{eq:riccati_multdim_kts} and Fubini's theorem that for all $r \in [0,t]$
	\begin{equation*}
	\begin{split}
	& (\psi(t-\cdot) \star \ol L)(r) 
	= \int_{[0,r]} \psi(t-r+z)\ol L(dz) \\
	& = u \int_{[0,r]} \Delta_{T-t}\ol K(r-z) \ol L(dz) 
	+ \int_{[0,r]} \int_{z-r}^{T-t} G(t+s) \ol K(r-z+s) ds \ol L(dz) \\
	& = u ((\Delta_{T-t} \ol K) \star \ol L)(r) + \int_{[0,r]} \int_{z-r}^{0}  G(t+s) \ol K(r-z+s) ds \ol L(dz) \\
	& \quad + \int_{[0,r]} \int_{0}^{T-t} G(t+s) \ol K(r-z+s) ds \ol L(dz) \\
	& = u ((\Delta_{T-t} \ol K) \star \ol L)(r) + \int_{-r}^0 G(t+s) \int_{[0,s+r]} \ol K(s+r-z) \ol L(dz) ds \\
	& \quad + \int_{0}^{T-t}  G(t+s) \int_{[0,r]} \ol K(r-z+s) \ol L(dz) ds \\
	& = u ((\Delta_{T-t} \ol K) \star \ol L)(r) + \int_{t-r}^{t} G(s) ds + \int_{0}^{T-t}  G(t+s) \left( (\Delta_s \ol K) \star \ol L \right)(r) ds.
	\end{split}
	\end{equation*}
	Note that $[0,t] \ni r \mapsto \int_{t-r}^{t} G(s) ds \in (\C^d)^*$ is continuous and of bounded variation. 
	Due to the assumption $\sup_{r \leq T} \| (\Delta_r \ol K) \star \ol L \|_{TV} < \infty$ we also have that for all $s\in [0,T]$, $(\Delta_{s} \ol K) \star \ol L$ is of bounded variation. 
	For $s \in (0,T]$ it holds by assumption that $\Delta_{s} \ol K$ is continuous on $[0,T]$ and therefore $(\Delta_{s}\ol K) \star \ol L$ is right-continuous on $[0,T)$. 
	Taking the supremum over partitions $0\leq t_1 <\ldots < t_N \leq t$, $N \in \N$, we have 
	\begin{equation*}
	\begin{split}
	& \sup \sum_{j=1}^{N-1} \left\| \int_0^{T-t} G(t+s) \left( (\Delta_s \ol K) \star \ol L \right)(t_{j+1}) ds - \int_0^{T-t} G(t+s) \left( (\Delta_s \ol K) \star \ol L \right)(t_j) ds \right\| \\
	& \leq \sup \int_0^{T-t} \| G(t+s) \| \sum_{j=1}^{N-1} \left\| \left( (\Delta_s \ol K) \star \ol L \right)(t_{j+1}) - \left( (\Delta_s \ol K) \star \ol L \right)(t_j) \right\| ds \\
	& \leq \int_0^{T-t} \| G(t+s) \| ds \, \sup_{r \leq T} \| (\Delta_r \ol K) \star \ol L \|_{TV} .
	\end{split}
	\end{equation*}
	Together with the assumption  $\sup_{r \leq T} \| (\Delta_r \ol K) \star \ol L \|_{TV} < \infty$ and the fact that $G \in L^1\left( [0,T], (\C^d)^* \right)$ this shows that $[0,t] \ni r \mapsto \int_0^{T-t} G(t+s) \left( (\Delta_s \ol K) \star \ol L \right)(r) ds \in (\C^d)^*$ is of bounded variation. 
	For $r \in [0,t]$ and $\varepsilon>0$ such that $r+\varepsilon< T$ it holds 
	\begin{equation}\label{eq:right_cont_G_by_domin_conv}
	\begin{split}
	& \left\| \int_0^{T-t} G(t+s) \left( (\Delta_s \ol K) \star \ol L \right)(r+\varepsilon) ds - \int_0^{T-t} G(t+s) \left( (\Delta_s \ol K) \star \ol L \right)(r) ds \right\| \\
	& \leq \int_0^{T-t} \|G(t+s)\| \, \left\|\left( (\Delta_s \ol K) \star \ol L \right)(r+\varepsilon) - \left( (\Delta_s \ol K) \star \ol L \right)(r) \right\| ds, 
	\end{split}
	\end{equation}
	and, for all $s \in [0,T-t]$,
	\begin{equation*}
	\begin{split}
	& \left\|\left( (\Delta_s \ol K) \star \ol L \right)(r+\varepsilon) - \left( (\Delta_s \ol K) \star \ol L \right)(r) \right\| 
	\leq \sup_{z \leq T} \| (\Delta_z \ol K) \star \ol L \|_{TV} < \infty .
	\end{split}
	\end{equation*}
	By right-continuity of $(\Delta_s\ol K) \star \ol L$ on $[0,T)$ for all $s \in [0,T-t]$ it holds that 
	\begin{equation*}
	\left\|\left( (\Delta_s \ol K) \star \ol L \right)(r+\varepsilon) - \left( (\Delta_s \ol K) \star \ol L \right)(r) \right\| \to 0 \textrm{ as } \varepsilon \downarrow 0.
	\end{equation*}
	The assumption that $G \in L^1\left( [0,T], (\C^d)^* \right)$, the dominated convergence theorem and \eqref{eq:right_cont_G_by_domin_conv} hence imply that 
	\begin{equation*}
	\left\| \int_0^{T-t} G(t+s) \left( (\Delta_s \ol K) \star \ol L \right)(r+\varepsilon) ds - \int_0^{T-t} G(t+s) \left( (\Delta_s \ol K) \star \ol L \right)(r) ds \right\| \to 0 \textrm{ as } \varepsilon \downarrow 0.
	\end{equation*}
	Altogether it follows that  $\psi(t-\cdot)\star \ol L\colon [0,t] \to (\C^d)^*$ is right-continuous and of bounded variation.
\end{proof}

\section{The inhomogeneous Volterra-Heston model}\label{sec:inhom_volt_heston_model}

To introduce the inhomogeneous Volterra-Heston model,  
let $d=m=2$, $\rho\in [-1,1]$ 
and let $\kappa, \theta,\bar\sigma,\eta \colon [0,T] \to [0,\infty)$ be continuous such that $\bar\sigma$ is strictly positive. 
Consider for initial values $S_0\in (0,\infty)$, $V_0 \in [0,\infty)$ the bivariate process
\begin{align}\label{eq:price_heston}
S_t&=S_0+\int_0^tS_r\eta(r)\sqrt{V_r}\left(\sqrt{1-\rho^2}dW^{(1)}_r+\rho dW^{(2)}_r\right),\\
V_t&=V_0+\int_0^tk(t-r)\kappa(r)(\theta(r)-V_r)dr+\int_0^tk(t-r)\bar\sigma(r)\sqrt{V_r}dW^{(2)}_r, \quad t \in [0,T], \label{eq:var_heston}
\end{align}
where $W=(W^{(1)},W^{(2)})^T$ is a two-dimensional Brownian motion. 
For $k$ we will later choose the fractional kernel $k(t)=\frac{t^{\alpha-1}}{\Gamma(\alpha)}$, $t\in (0,T]$, with $\alpha\in (\frac{1}{2},1)$. 
This is a particular example (cf. \cite[Example 3.7 and Example~6.2]{Jaber2019affine}) within the following class of kernels: 
\begin{assumption}\label{assumption:special_kernel}
	Let $k\in L^2([0,T], [0,\infty))$ such that $k$ is not identically zero, nonincreasing and continuous on $(0,T]$ and that assumption \ref{assumption:kernel_order} (see Section~\ref{sec:existence_of_soln_of_convolution_SDE}) is satisfied. Moreover, assume that for every $h\in [0,1]$  
	the kernel $[0,T]\ni t\mapsto k((t+h)\wedge T)\in [0,\infty)$ admits a nonnegative resolvent $L^h$ of the first kind on $[0,T]$ which satisfies for every $t\in [0,T]$ that $[0,T]\ni s\mapsto L^h([s,(s+t)\wedge T])\in [0,\infty)$ is nonincreasing. 
	Denote by $L$ the resolvent of the first kind of $k$. 
\end{assumption} 

We call \eqref{eq:price_heston} and \eqref{eq:var_heston} the inhomogeneous Volterra-Heston model. 
Note that if $\eta(t)=1$ for all $t \in [0,T]$ and $\kappa, \theta,\bar\sigma$ are constant, we recover the Volterra-Heston model proposed in \cite[Section 7]{Jaber2019affine}. 
If furthermore $k$ is a fractional kernel then we recover the rough Heston model of \cite{Euch2019characteristicFctRoughHeston}, \cite{Euch2018perfectHedging} and \cite{Euch2019rougheningHeston}. 
In the case $k(t)=1$ for all $t \in [0,T]$, this is the common Heston model \cite{Heston1993closedformSoln}.

In this section we obtain results analogous to \cite[Theorem 7.1]{Jaber2019affine} also for the inhomogeneous Volterra-Heston model. 
First of all, we establish existence of a continuous weak solution for \eqref{eq:price_heston} and \eqref{eq:var_heston}, and that the price process $S$ is a martingale (see Proposition~\ref{propo:existence_S_V}). 
Next, we show that it is possible to introduce the time-dependent continuous functions  $\kappa,\theta,\bar\sigma$ and $\eta$ into the proof of \cite[Lemma 7.4]{Jaber2019affine} to establish that under the same conditions as in \cite[Lemma 7.4]{Jaber2019affine}, the Riccati-Volterra equation \eqref{eq:riccati_multdim_kts} has a unique solution $\psi$ in $L^2\left( [0,T], (\C^2)^* \right)$ (see Proposition~\ref{propo:ex_unique_soln_riccati_volt_eq}). Moreover, we prove that under more restrictive assumptions $\psi$ is bounded. 
We then establish in  Proposition~\ref{propo:exp_Y_is_true_martingale_heston} that $e^Y$ with $Y$ defined by~\eqref{eq:def_Y_kts} is a true martingale, and thus the conditional Fourier-Laplace formula~\eqref{eq:conditional_Fourier_Laplace_formula} of Theorem~\ref{thm:multdim_volt_kts} holds true. 
The proof is facilitated by Theorem~\ref{thm:represent_for_Y}, which replaces \cite[Theorem 4.5]{Jaber2019affine}. 
Subsequently, we argue in Corollary~\ref{cor:uniqueness_S_V} that any continuous weak solution for \eqref{eq:price_heston} and \eqref{eq:var_heston} is unique.  
We finally apply the previous results to the inhomogeneous Volterra-Heston model with fractional kernel.

\begin{propo}\label{propo:existence_S_V}  
Let \ref{assumption:special_kernel} be satisfied. Then, equation \eqref{eq:var_heston} possesses a $[0,\infty)$-valued continuous weak solution $V$. 
Moreover, the paths of $V$ are H\"older continuous of any order strictly smaller than $\gamma/2$ (where $\gamma \in (0,2]$ comes from \ref{assumption:kernel_order}). 
Furthermore, 
\begin{equation}\label{eq:formula_for_S_inhom_Heston}
S_t=S_0e^{\int_0^t\eta(r)\sqrt{V_r}\left(\sqrt{1-\rho^2}dW^{(1)}_r+\rho dW^{(2)}_r\right)-\int_0^t \frac{\eta^2(r)V_r}{2}dr}, \quad t\in [0,T],
\end{equation}
is a solution of \eqref{eq:price_heston}, and $S$ is a martingale. 
\end{propo}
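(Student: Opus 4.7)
The plan is to handle the four assertions in turn. For the existence of a $[0,\infty)$-valued continuous weak solution $V$ of \eqref{eq:var_heston}, I would invoke Proposition~\ref{propo:existence_weak_soln_sde_conv_nonneg} with $d=1$, $m=2$, $\ol K=k$, drift $b(s,v)=\kappa(s)(\theta(s)-v)$, and diffusion $\sigma(s,v)=(0,\bar\sigma(s)\sqrt{v^+})$. Taking $h=0$ in Assumption~\ref{assumption:special_kernel} supplies all the required properties of $k$ and its first-kind resolvent $L$ (nonnegativity of $k$ and $L$, $k$ not identically zero, $k$ nonincreasing and continuous on $(0,T]$, Assumptions~\ref{assumption:kernel_L_2} and~\ref{assumption:kernel_order}, and monotonicity of $s\mapsto L([s,s+t])$); continuity and boundedness of $\kappa,\theta,\bar\sigma$ on $[0,T]$ yield continuity and the linear growth Assumption~\ref{assumption:linear_growth} for $b$ and $\sigma$; and at $v=0$ one has $b(s,0)=\kappa(s)\theta(s)\ge 0$ and $\sigma(s,0)=0$.

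For the H\"older regularity of $V$, Lemma~\ref{lem:moments_of_soln_bdd} yields $\sup_{t\in[0,T]} E[V_t^p]<\infty$ for every $p\ge 1$, and therefore uniform $L^p$-bounds for $\kappa(r)(\theta(r)-V_r)$ and $\bar\sigma(r)^2 V_r$. Splitting $V_t-V_s$ into the two standard drift and diffusion increments $\int_s^t k(t-r)(\cdot)dr+\int_0^s(k(t-r)-k(s-r))(\cdot)dr$ and applying Burkholder-Davis-Gundy and H\"older together with \ref{assumption:kernel_order}, one obtains $E[|V_t-V_s|^p]\le c\,|t-s|^{p\gamma/2}$ for every $p\ge 1$; Kolmogorov's continuity criterion then delivers a H\"older continuous version of any order strictly below $\gamma/2$ (this is essentially \cite[Lemma 2.4]{Jaber2019affine} applied to our coefficients). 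That \eqref{eq:formula_for_S_inhom_Heston} solves \eqref{eq:price_heston} follows from It\^o's formula: setting $N_t=\int_0^t\eta(r)\sqrt{V_r}(\sqrt{1-\rho^2}dW^{(1)}_r+\rho dW^{(2)}_r)$, which is a well-defined continuous local martingale since $\int_0^T\eta^2(r)V_r dr<\infty$ almost surely by pathwise continuity of $V$, the right-hand side of \eqref{eq:formula_for_S_inhom_Heston} equals $S_0\,\cE(N)_t=S_0\exp(N_t-\tfrac12\langle N\rangle_t)$ and therefore satisfies $dS=S\,dN$, which is \eqref{eq:price_heston}.

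The main obstacle is to upgrade $S=S_0\,\cE(N)$ from a nonnegative local martingale (hence supermartingale) to a true martingale. My plan is to localize by $\tau_n:=\inf\{t\ge 0\colon V_t\ge n\}\wedge T$; on $[0,\tau_n]$ one has $\langle N\rangle\le\|\eta\|_\infty^2\,nT$, so Novikov's criterion gives that $\cE(N)^{\tau_n}$ is a uniformly integrable martingale with $E[S_{T\wedge\tau_n}]=S_0$. Pathwise continuity of $V$ forces $\tau_n\to T$ a.s., and hence $S_{T\wedge\tau_n}\to S_T$ a.s., but Fatou's lemma only yields $E[S_T]\le S_0$. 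To obtain equality I would adapt the Sin-Cheridito-Filipovi{\'c}-Yor strategy: set $dQ_n/dP=\cE(N)^{\tau_n}_T$; by Girsanov, under $Q_n$ the process $V$ satisfies a new affine Volterra equation of the same form with drift augmented by the covariation term $\rho\,\bar\sigma(r)\eta(r)V_r$ (stemming from the $W^{(2)}$-component of $N$), while keeping the same diffusion $\bar\sigma(r)\sqrt{V_r}$. The modified coefficients still satisfy \ref{assumption:linear_growth} uniformly in $n$, so Lemma~\ref{lem:moments_of_soln_bdd} produces $\sup_n\sup_{t\in[0,T]}E_{Q_n}[V_t^p]<\infty$; combining this with the uniform H\"older estimate from the second paragraph (applied under $Q_n$) yields $\sup_n E_{Q_n}[\sup_{t\le T}V_t^p]<\infty$, so that $Q_n(\tau_n<T)=Q_n(\sup_{t\le T}V_t\ge n)\to 0$. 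Rewriting this as $E[\cE(N)^{\tau_n}_T\,\mathbf 1_{\tau_n<T}]\to 0$ shows that $\{S_{T\wedge\tau_n}\}_n$ is uniformly integrable under $P$, hence $E[S_T]=S_0$, and together with the supermartingale property this concludes that $S$ is a true martingale.
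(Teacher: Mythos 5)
Your proposal is correct and follows essentially the same route as the paper: existence and nonnegativity of $V$ via Proposition~\ref{propo:existence_weak_soln_sde_conv_nonneg} applied to $\hat b(t,x)=\kappa(t)(\theta(t)-x)$ and $\hat\sigma(t,x)=\bar\sigma(t)\sqrt{x}\,1_{[0,\infty)}(x)$, H\"older regularity from the moment bounds of Lemma~\ref{lem:moments_of_soln_bdd} combined with \cite[Lemma 2.4]{Jaber2019affine}, and the martingale property of $S$ by the localization-plus-Girsanov argument. The only cosmetic difference is that the paper packages your third paragraph into Lemma~\ref{lemma:e_U_is_martingale}, whose proof adapts \cite[Lemma 7.3]{Jaber2019affine} in exactly the way you describe.
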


\begin{proof} 
	For $t \in [0,T]$, $x \in \R$ let $\hat b(t,x)=\kappa(t)(\theta(t)-x)$ and $\hat\sigma(t,x)=\bar\sigma(t)\sqrt{x}1_{[0,\infty)}(x)$, and  
	consider the stochastic Volterra equation  
	\begin{equation}\label{eq:equ_for_V_inhom_heston}
	V_t = V_0+\int_0^tk(t-r)\hat b(r,V_r)dr+\int_0^tk(t-r)\hat\sigma(r,V_r)dW^{(2)}_r, \quad t \in [0,T].
	\end{equation}
	Note that due to continuity of $\kappa$, $\theta$ and $\bar\sigma$ it holds that $\hat b$ and $\hat \sigma$ are in $C([0,T]\times\R, \R)$ and that \ref{assumption:linear_growth} is satisfied. 
	Furthermore, for all $t \in [0,T]$ we have that $\hat b(t,0) = \kappa(t)\theta(t)\geq 0$ and $\hat\sigma(t,0)=0$. Therefore, taking also into account assumption \ref{assumption:special_kernel} and the fact that $V_0 \in [0,\infty)$, it follows from Proposition~\ref{propo:existence_weak_soln_sde_conv_nonneg} that equation \eqref{eq:equ_for_V_inhom_heston} admits a $[0,\infty)$-valued continuous weak solution $V$. 
	This is also a solution of equation \eqref{eq:var_heston}.  
	Lemma~\ref{lem:moments_of_soln_bdd} and (A2) imply that for any $p\geq 1$, $\sup_{t \in [0,T]} E[ \lvert \hat\sigma(t,V_t) \rvert^p + \lvert \hat b(t,V_t) \rvert^p ]$ is finite. Thus, \cite[Lemma 2.4]{Jaber2019affine} applies and yields that the paths of $V$ are Hölder continuous of any order strictly smaller than $\gamma/2$.

	Finally, note that $S$ given by \eqref{eq:formula_for_S_inhom_Heston} clearly solves \eqref{eq:price_heston}. 
	Since $\eta$ is continuous and hence bounded on $[0,T]$, Lemma~\ref{lemma:e_U_is_martingale} proves that $S$ is a martingale. 
\end{proof}

For the remainder of this section let $V$ be a $[0,\infty)$-valued continuous weak solution of \eqref{eq:var_heston} and let $S$ be the solution of \eqref{eq:price_heston} given by \eqref{eq:formula_for_S_inhom_Heston}.
 
Note that It\^o's formula shows that for all $t \in [0,T]$ the log-price is given by
\begin{equation}\label{eq:log_S_t}
\log(S_t)=\log(S_0)+\int_0^t\eta(r)\sqrt{V_r}\left(\sqrt{1-\rho^2}dW^{(1)}_r+\rho dW^{(2)}_r\right)-\int_0^t\frac{\eta^2(r)V_r}{2}dr.
\end{equation}
Hence the pair $X=(\log(S),V)^T$ satisfies
\begin{equation}
\begin{split}
\begin{pmatrix}
X^{(1)}_t \\ X^{(2)}_t
\end{pmatrix}
&=\begin{pmatrix}
X^{(1)}_0 \\ X^{(2)}_0
\end{pmatrix}
+\int_0^t
\begin{pmatrix}
1 & 0 \\ 0 & k(t-r)
\end{pmatrix}
\left[
\begin{pmatrix}
0 \\ \kappa(r) \theta(r)
\end{pmatrix}
+
\begin{pmatrix}
0 \\ 0
\end{pmatrix}
X^{(1)}_r 
+
\begin{pmatrix}
-\frac{\eta^2(r)}{2} \\ -\kappa(r)
\end{pmatrix}
X^{(2)}_r
\right]
dr\\
&\quad
+\int_0^t
\begin{pmatrix}
1 & 0 \\ 0 & k(t-r)
\end{pmatrix}
\begin{pmatrix}
\eta(r)\sqrt{1-\rho^2} & \eta(r)\rho \\
0 & \bar\sigma(r)
\end{pmatrix}
\sqrt{X^{(2)}_r} 
dW_r, \quad t \in [0,T].
\end{split}
\end{equation}
For $t \in [0,T]$, $x \in \R^2$ let
\begin{equation}\label{eq:coeffs_heston}
\begin{split}
&\ol K(t)=\begin{pmatrix}
1 & 0 \\ 0 & k(t)
\end{pmatrix}, 
\quad \ol L=\begin{pmatrix}
\delta_0 & 0 \\ 0 & L
\end{pmatrix},
\\
& b^0(t)=\begin{pmatrix}
0 \\ \kappa(t) \theta(t)
\end{pmatrix},
\quad
B(t)=
\begin{pmatrix}
b^1(t) & b^2(t)
\end{pmatrix}
=
\begin{pmatrix}
0 & -\frac{\eta^2(t)}{2} \\ 
0 & -\kappa(t)
\end{pmatrix},
\quad 
b(t,x) = b^0(t)+B(t)x ,\\
&\sigma(t,x)=
\begin{pmatrix}
\eta(t)\sqrt{1-\rho^2} & \eta(t)\rho \\
0 & \bar\sigma(t)
\end{pmatrix} 
\sqrt{x_2}1_{[0,\infty)}(x_2),
\quad 
A^0(t)=A^1(t)=0,\\
&A^2(t)=\begin{pmatrix}
\eta^2(t) & \rho \eta(t) \bar\sigma(t) \\ 
\rho \eta(t) \bar\sigma(t) & \bar\sigma^2(t)
\end{pmatrix},
\quad
a(t,x)=A^0(t)+A^1(t)x_1+A^2(t)x_2,
\end{split}
\end{equation}
and observe that \ref{assumption:affine} with $E=\R\times[0,\infty)$ and \ref{assumption:linear_growth} are  satisfied and that $\sigma \in C\left([0,T]\times\R^2,\R^{2\times 2}\right)$.
Then 
\begin{equation}
X_t=X_0+\int_0^t \ol K(t-s)b(s,X_s)ds+\int_0^t \ol K(t-s)\sigma(s,X_s)dW_s, \quad t \in [0,T].
\end{equation}
With \eqref{eq:coeffs_heston} we thus obtain that for the inhomogeneous Volterra-Heston model the Riccati-Volterra equation \eqref{eq:riccati_multdim_kts} for any  $u \in (\C^2)^*$ and $f \in L^1\left( [0,T], (\C^2)^* \right)$ reads 
\begin{align}
\label{eq:riccati_heston_gen_a}
\psi_1(t) & = u_1 + \int_t^T f_1(s)ds,\\
\psi_2(t) & = u_2 k(T-t) + \int_t^T \Big( f_2(s) +  \frac{\eta^2(s)}{2} (\psi^2_1(s)-\psi_1(s)) - \kappa(s) \psi_2(s) + \frac{\bar\sigma^2(s)}{2} \psi_2^2(s) \nonumber \\
&\quad + \rho \bar\sigma(s)  \eta(s) \psi_1(s) \psi_2(s) \Big) k(s-t)ds, \quad t \in [0,T]. \label{eq:riccati_heston_gen_b}
\end{align}
Denote $\widetilde{\psi}(r)=\psi(T-r)$, $r \in [0,T]$, and observe that $\psi$ solves \eqref{eq:riccati_heston_gen_a} and  \eqref{eq:riccati_heston_gen_b} if and only if $\widetilde \psi$ solves 
\begin{equation}
\begin{split}\label{eq:riccati_heston_gen_tilde}
\widetilde\psi_1(t) & = u_1 + \int_0^t f_1(T-s)ds,\\
\widetilde\psi_2(t) & = u_2 k(t) + \int_0^t \Big( f_2(T-s) +  \frac{\eta^2(T-s)}{2} (\widetilde\psi^2_1(s)-\widetilde\psi_1(s)) \\
& \quad + \left( \rho \bar\sigma(T-s) \eta(T-s) \widetilde{\psi}_1(s) - \kappa(T-s) \right) \widetilde \psi_2(s) + \frac{\bar\sigma^2(T-s)}{2} \widetilde \psi_2^2(s) \Big) k(t-s)ds, \\
& \quad t \in [0,T].
\end{split}
\end{equation}

\begin{propo}\label{propo:ex_unique_soln_riccati_volt_eq} 
	Let \ref{assumption:special_kernel} be satisfied. 
	Suppose that $u \in (\C^2)^*$ and $f \in L^1\left( [0,T], (\C^2)^* \right)$ such that $\Re \psi_1 \in [0,1]$, $\Re u_2\leq 0$ and $\Re f_2 \leq 0$, where $\psi_1 = u_1 + \int_{\cdot}^T f_1(s)ds$. 
	Then the Riccati-Volterra equation \eqref{eq:riccati_heston_gen_b} has a unique solution $\psi_2 \in L^2([0,T],\C)$. Moreover, it satisfies $\Re \psi_2 \leq 0$. 
	If, in addition, $u_2=0$, $L \star \Im f_1(T-\cdot) \in L^2\left( [0,T], \R \right)$ and $f_2 \in L^2\left( [0,T], \C \right)$, then $\psi_2$ is bounded.
\end{propo}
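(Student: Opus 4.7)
The plan is to adapt the proof of \cite[Lemma 7.4]{Jaber2019affine} to the present inhomogeneous setting. I would first pass to the time-reversed equation \eqref{eq:riccati_heston_gen_tilde} so that the problem reduces to a scalar quadratic Volterra integral equation for $\widetilde\psi_2=\psi_2(T-\cdot)$ on $[0,T]$, with $\widetilde\psi_1$ explicitly given, $\Re\widetilde\psi_1\in[0,1]$ by hypothesis, and with continuous nonnegative coefficients $\kappa(T-\cdot),\bar\sigma(T-\cdot),\eta(T-\cdot)$. The sign hypotheses $\Re u_2\leq 0$ and $\Re f_2(T-\cdot)\leq 0$ are preserved.

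For existence, I would introduce for each $N\in\N$ a globally Lipschitz truncation $F_N$ of the quadratic nonlinearity
\begin{equation*}
F(t,z)=f_2(T-t)+\tfrac{\eta^2(T-t)}{2}\bigl(\widetilde\psi_1^2(t)-\widetilde\psi_1(t)\bigr)+\bigl(\rho\bar\sigma(T-t)\eta(T-t)\widetilde\psi_1(t)-\kappa(T-t)\bigr)z+\tfrac{\bar\sigma^2(T-t)}{2}z^2
\end{equation*}
with $F_N(t,z)=F(t,z)$ for $|z|\leq N$. Using $k\in L^2$ from Assumption~\ref{assumption:special_kernel} together with Cauchy-Schwarz and the boundedness of all coefficients on the compact interval $[0,T]$, the map $z\mapsto u_2 k+F_N(\cdot,z)\star k$ is a contraction on $L^2([0,t_0],\C)$ for $t_0$ sufficiently small; patching then yields a unique solution $\widetilde\psi_2^N\in L^2([0,T],\C)$ of the truncated equation.

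The technical core of the proof is obtaining an $N$-uniform bound on $\widetilde\psi_2^N$. The sign information comes from the algebraic identities $\Re(\widetilde\psi_1^2-\widetilde\psi_1)=\Re\widetilde\psi_1(\Re\widetilde\psi_1-1)-(\Im\widetilde\psi_1)^2\leq 0$ (using $\Re\widetilde\psi_1\in[0,1]$) and $\Re(z^2)\leq 0$ whenever $\Re z=0$, combined with $\Re u_2\leq 0$ and $\Re f_2\leq 0$. Following \cite[Lemma 7.4]{Jaber2019affine}, one convolves \eqref{eq:riccati_heston_gen_tilde} with the nonnegative shifted resolvents $L^h$ from Assumption~\ref{assumption:special_kernel} and exploits the monotonicity property that $s\mapsto L^h([s,s+t])$ is nonincreasing to derive simultaneously the a priori bound $\Re\widetilde\psi_2^N\leq 0$ and a quadratic Gr\"onwall-type estimate controlling $\|\widetilde\psi_2^N\|_{L^2([0,T])}$ independently of $N$. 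Choosing $N$ sufficiently large, the truncation is never active, which yields the sought solution of \eqref{eq:riccati_heston_gen_b}. Uniqueness then follows from a Gr\"onwall argument applied to the $L^2$-norm of the difference of two solutions, where the quadratic term is linearized using the sign bound $\Re\widetilde\psi_2\leq 0$ on both solutions.

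For the final boundedness claim, assuming in addition $u_2=0$, $L\star\Im f_1(T-\cdot)\in L^2$ and $f_2\in L^2$, I would convolve \eqref{eq:riccati_heston_gen_tilde} with $L$ (analogously to Remark~\ref{rem:affine_volt_proc_conv}) to rewrite it as an equation for $\widetilde\psi_2\star L$ in which the kernel $k$ no longer appears explicitly; the sign bound $\Re\widetilde\psi_2\leq 0$ together with the new $L^2$-integrability hypotheses on $f_1,f_2$ then produces an $L^\infty$-bound for $\widetilde\psi_2$ via a linear Gr\"onwall argument. The main obstacle I expect is the $N$-uniform a priori estimate: the time-dependent coefficients must be handled carefully so that the monotonicity of the shifted resolvents $L^h$ can still be used to absorb the indefinite-sign linear term $\bigl(\rho\bar\sigma(T-\cdot)\eta(T-\cdot)\widetilde\psi_1-\kappa(T-\cdot)\bigr)\widetilde\psi_2$, and the truncation $F_N$ must be constructed so that the invariance of $\{\Re z\leq 0\}$ under the convolution equation is preserved for each $N$.
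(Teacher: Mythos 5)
Your overall architecture (time reversal to \eqref{eq:riccati_heston_gen_tilde}, truncation plus local contraction, invariance of $\{\Re z\le 0\}$, a priori bounds to rule out blow-up, Gr\"onwall for uniqueness) is the right shape, and it essentially reconstructs from scratch the machinery that the paper instead imports wholesale: the paper reduces everything to \cite[Theorem B.1, Theorem C.2, Corollary B.3]{Jaber2019affine} and the proof of \cite[Lemma 7.4]{Jaber2019affine}, only checking that continuous bounded time-dependent coefficients $\kappa,\theta,\bar\sigma,\eta$ do not disturb those arguments. However, your proposal has a genuine gap at its self-declared technical core. You claim the $N$-uniform bound follows from ``a quadratic Gr\"onwall-type estimate controlling $\|\widetilde\psi_2^N\|_{L^2([0,T])}$ independently of $N$.'' A quadratic Gr\"onwall inequality does \emph{not} give a global-in-time bound --- it permits finite-time blow-up, which is exactly the phenomenon one must exclude for a Riccati equation. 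The actual mechanism is a two-step \emph{linear} comparison that exploits the decomposition into real and imaginary parts: first one bounds $|\Im\widetilde\psi_2|$ by (a shift of) a solution $h$ of a linear Volterra equation, which works because $\Im(z^2)=2\,\Re z\,\Im z$ and the already-established invariance $\Re\widetilde\psi_2\le 0$ gives the coefficient of $\Im\widetilde\psi_2$ coming from the quadratic term the favourable sign; only then does one bound $\Re\widetilde\psi_2$ from below by a solution $l$ of a second linear Volterra equation, in which the dangerous term $-\tfrac{\bar\sigma^2}{2}(\Im\widetilde\psi_2)^2$ appears as a known inhomogeneity controlled via $h$, while $(\Re\widetilde\psi_2)^2\ge 0$ is simply discarded. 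Without identifying this order of operations and the specific algebraic identities $\Re(z^2)=(\Re z)^2-(\Im z)^2$ and $\Im(z^2)=2\,\Re z\,\Im z$, the a priori estimate does not close, and your sketch gives no substitute.

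There is a second, smaller gap in the boundedness claim. Convolving \eqref{eq:riccati_heston_gen_tilde} with $L$ produces an equation for $L\star\widetilde\psi_2$ (namely $(L\star\widetilde\psi_2)(t)=u_2+\int_0^t F(s)\,ds$ in your notation), and boundedness of $L\star\widetilde\psi_2$ does not imply boundedness of $\widetilde\psi_2$, since $L$ is merely a measure of bounded variation. The route that actually works, and the one the paper takes, stays with the comparison functions: with $u_2=0$ the singular term $u_2k$ drops out, the linear equation for $h$ has an $L^2$ inhomogeneity by the extra hypotheses $L\star\Im f_1(T-\cdot)\in L^2$ and $f_2\in L^2$, so $h=k\star(\cdots)$ is bounded by Cauchy--Schwarz since $k\in L^2$; once $h$ is bounded the square $(h+|\rho\bar\sigma^{-1}(T-\cdot)\eta(T-\cdot)\Im\widetilde\psi_1|)^2$ is bounded, hence in $L^2$, and the same Cauchy--Schwarz argument bounds $l$, giving boundedness of $\Re\widetilde\psi_2$ and $\Im\widetilde\psi_2$. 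Note that a direct attempt on $\widetilde\psi_2=k\star F$ fails because $\widetilde\psi_2^2$ is a priori only in $L^1$, and $k\star(L^1)$ need not be bounded --- this is precisely why the ordering ``$h$ first, then $l$'' is essential here too.
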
 

\begin{proof}
	Using that $\kappa, \bar\sigma, \eta$ are continuous and hence bounded on $[0,T]$, and that $\bar\sigma>0$, the same reasoning as in the proof of \cite[Lemma 7.4]{Jaber2019affine} applies to $\widetilde\psi$. 
	In particular, the boundedness guarantees that all integrability conditions remain satisfied and that \cite[(B.2)]{Jaber2019affine} continues to hold.
 
	We obtain from \cite[Theorem B.1]{Jaber2019affine} the existence of a unique non-continuable solution $(\widetilde\psi_2,T_{\max})$ of  \eqref{eq:riccati_heston_gen_tilde} (see \cite[Appendix B]{Jaber2019affine} for a definition), which by \cite[Theorem C.2]{Jaber2019affine} satisfies $\Re \widetilde\psi_2 \leq 0$.  
	Note that in our setting, the equations for $h$ and $l$ in the proof of \cite[Lemma 7.4]{Jaber2019affine} become 
	\begin{equation*}
	\begin{split}
	h & = |\Im u_2 | k + |\rho \bar\sigma^{-1}(T-\cdot) \eta(T-\cdot) \Im u_1 | + k \star \bigg( \Big| \rho \bar\sigma^{-1}(T-\cdot) \eta(T-\cdot) (L \star \Im f_1(T-\cdot)) \\
	& \quad + \Im f_2(T-\cdot) 
	+ \frac{\Im \widetilde\psi_1}{2} \Big( 2 \eta^2(T-\cdot) (1-\rho^2) \Re\widetilde{\psi}_1 - \eta^2(T-\cdot) \\
	& \quad + 2\rho\kappa(T-\cdot)\bar\sigma^{-1}(T-\cdot) \eta(T-\cdot) \Big) \Big| 
	+ (\rho\bar\sigma(T-\cdot) \eta(T-\cdot) \Re\widetilde\psi_1 - \kappa(T-\cdot)) h \bigg),\\
	l & = \Re u_2 k + k \star \bigg( \Re f_2(T-\cdot) 
	+ \frac{\eta^2(T-\cdot)}{2} \left( (\Re \widetilde\psi_1)^2 - \Re \widetilde\psi_1 - (\Im \widetilde\psi_1)^2 \right) \\
	& \quad - | \rho \bar\sigma(T-\cdot) \eta(T-\cdot) \Im \widetilde\psi_1 | (h + | \rho \bar\sigma^{-1}(T-\cdot) \eta(T-\cdot) \Im\widetilde\psi_1 |) \\
	& \quad - \frac{\bar\sigma^2(T-\cdot)}{2} (h + | \rho \bar\sigma^{-1}(T-\cdot) \eta(T-\cdot) \Im\widetilde\psi_1 |)^2 \\
	& \quad + (\rho\bar\sigma(T-\cdot) \eta(T-\cdot) \Re\widetilde\psi_1 - \kappa(T-\cdot)) l \bigg) .
	\end{split}
	\end{equation*}
	By \cite[Corollary B.3]{Jaber2019affine} there exist unique global solutions $h,l \in L^2([0,T],\R)$.  
	As in the proof of \cite[Lemma 7.4]{Jaber2019affine} one shows that $l \leq \Re \widetilde\psi_2 \leq 0$ and $| \Im \widetilde\psi_2 | \leq h + | \rho \bar\sigma^{-1}(T-\cdot) \eta(T-\cdot) \Im\widetilde\psi_1 |$. 
	From this it follows by the same arguments as in the proof of \cite[Lemma 7.4]{Jaber2019affine} that there exists a unique global solution $\widetilde \psi_2 \in L^2([0,T],\C)$ of \eqref{eq:riccati_heston_gen_tilde} which satisfies $\Re \widetilde\psi_2 \leq 0$.  
	
	It remains to prove that $\widetilde \psi_2$ is bounded under the additional assumptions that $u_2=0$, $L \star \Im f_1(T-\cdot) \in L^2\left( [0,T], \R \right)$ and $f_2 \in L^2\left( [0,T], \C \right)$. 
	Since $\kappa,\bar\sigma,\bar\sigma^{-1},\eta$ and $\widetilde\psi_1 = u_1 + \int_0^{\cdot} f_1(T-s)ds$ are bounded on $[0,T]$, the Cauchy-Schwarz inequality implies first that $h$ is bounded on $[0,T]$ and subsequently the same for $l$. 
	This yields boundedness of $\Re \widetilde\psi_2$ and $\Im \widetilde\psi_2$.
\end{proof}

\begin{propo}\label{propo:exp_Y_is_true_martingale_heston}
	Let \ref{assumption:special_kernel} be satisfied. 
	Suppose that $u \in (\C^2)^*$ and $f \in L^1\left( [0,T], (\C^2)^* \right)$ such that $\Re \psi_1 \in [0,1]$, $\Re u_2\leq 0$ and $\Re f_2 \leq 0$, where $\psi_1 = u_1 + \int_{\cdot}^T f_1(s)ds$. 
	Let $\psi_2 \in L^2([0,T],\C)$ denote the unique solution of  \eqref{eq:riccati_heston_gen_b} (cf. Proposition \ref{propo:ex_unique_soln_riccati_volt_eq}). 
	Then $exp(Y)$ with $Y$ defined by \eqref{eq:def_Y_kts} 
	is a true martingale. 
	In particular, formula \eqref{eq:conditional_Fourier_Laplace_formula} holds.
\end{propo}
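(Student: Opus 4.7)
The plan is to deduce true martingality of the complex local martingale $\exp(Y)$ (whose local martingale property is guaranteed by Theorem~\ref{thm:multdim_volt_kts}) by exhibiting a positive true martingale that dominates its modulus $\exp(\Re Y_t)$; the family $\{\exp(Y_\tau) : \tau \le T \text{ stopping time}\}$ will then be uniformly integrable, so $\exp(Y)$ is a true martingale, and \eqref{eq:conditional_Fourier_Laplace_formula} follows from Theorem~\ref{thm:multdim_volt_kts}.

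First I would invoke Theorem~\ref{thm:represent_for_Y}(ii) with the Heston coefficients in~\eqref{eq:coeffs_heston}, after verifying its hypothesis $\sup_{r\le T}\|(\Delta_r k)\star L\|_{TV}<\infty$ from the translation-monotonicity of $L$ in Assumption~\ref{assumption:special_kernel}. Since $\ol L=\diag(\delta_0,L)$, the first component of the measure $g_t$ in~\eqref{eq:def_pi_h} vanishes; substituting $A^0\equiv 0$, $b^0=(0,\kappa\theta)^T$ and $X_s=(\log S_s,V_s)^T$ into~\eqref{eq:repres_Y_conv_case} then gives a decomposition
\begin{equation*}
Y_t = C(t) + \Bigl[\psi_1(t)\log S_t + \int_0^t f_1(s)\log S_s\,ds\Bigr] + \Bigl[\psi_2(t) L(\{0\}) V_t + \int_{[0,t]} dg_{t,2}(s)\, V_{t-s} + \int_0^t f_2(s) V_s\,ds\Bigr],
\end{equation*}
with $C(t)=-g_t(t)X_0+\int_t^T\psi_2(s)\kappa(s)\theta(s)\,ds$ a deterministic function which, under the present hypotheses, is bounded on $[0,T]$.

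Next I would collapse the bracketed log-price part by integration by parts, using $\psi_1(t)=u_1+\int_t^T f_1(s)\,ds$ (so $\psi_1'(s)=-f_1(s)$) together with It\^o's formula for $d\log S_s$, to rewrite it as $\psi_1(0)\log S_0+M_t-\tfrac12\int_0^t\psi_1(s)\eta^2(s)V_s\,ds$, where
\begin{equation*}
M_t := \int_0^t \psi_1(s)\eta(s)\sqrt{V_s}\,\bigl(\sqrt{1-\rho^2}\,dW^{(1)}_s + \rho\,dW^{(2)}_s\bigr)
\end{equation*}
is a complex local martingale with $\langle\Re M\rangle_t=\int_0^t(\Re\psi_1)^2\eta^2 V_s\,ds$. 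Since $\Re\psi_1\in[0,1]$ forces $\Re\psi_1(\Re\psi_1-1)\le 0$, this yields the pointwise bound $\Re M_t-\tfrac12\int_0^t\Re\psi_1\,\eta^2 V_s\,ds\le\log\cE(\Re M)_t$. For the $V$-bracket, I would check that each summand has nonpositive real part: the first two follow from $\Re\psi_2\le 0$ (Proposition~\ref{propo:ex_unique_soln_riccati_volt_eq}), $\Re f_2\le 0$, and nonnegativity of $L(\{0\})$ and $V$; the Stieltjes term $\int_{[0,t]}d\Re g_{t,2}(s)\,V_{t-s}$ should be $\le 0$ because the translation-monotonicity of $L$ in Assumption~\ref{assumption:special_kernel}, coupled with $\Re\psi_2\le 0$ and $L\ge 0$, forces $r\mapsto\Re g_{t,2}(r)$ to be monotone in the right direction.

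Combining these bounds gives $\exp(\Re Y_t)\le K\cdot\cE(\Re M)_t$ for a finite deterministic constant $K=\sup_{t\le T}\exp(\Re C(t))\cdot S_0^{\Re\psi_1(0)}$. The stochastic exponential $\cE(\Re M)$ is a true martingale by Lemma~\ref{lemma:e_U_is_martingale}, applied exactly as for $S$ in the proof of Proposition~\ref{propo:existence_S_V} but with the bounded integrand $\Re\psi_1\cdot\eta$ in place of $\eta$. A nonnegative true martingale on a finite horizon has uniformly integrable family $\{\cE(\Re M)_\tau\}_\tau$; the pointwise domination transfers UI to $\{\exp(Y_\tau)\}_\tau$, completing the proof. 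The main obstacle will be the sign analysis of $\int_{[0,t]}d\Re g_{t,2}(s)\,V_{t-s}$, where unpacking the definition~\eqref{eq:def_pi_h} of $g_{t,2}$ and extracting the correct monotonicity of $\Re g_{t,2}$ from the translation-monotonicity hypothesis on $L$ requires some careful measure-theoretic bookkeeping.
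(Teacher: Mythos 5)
Your proposal follows essentially the same route as the paper's proof: apply Theorem~\ref{thm:represent_for_Y}(ii), use $\Re\psi_2\le 0$, $\Re f_2\le 0$, and the nonnegativity/translation-monotonicity of $L$ to discard the $V$-terms and the deterministic terms from the upper bound on $\Re Y_t$, collapse the log-price part by integration by parts using $\psi_1'=-f_1$, exploit $\Re\psi_1\in[0,1]$ to dominate $\exp(\Re Y_t)$ by a constant times the stochastic exponential of $U_t=\int_0^t\Re\psi_1(s)\eta(s)\sqrt{V_s}\,dB_s$, and invoke Lemma~\ref{lemma:e_U_is_martingale} to conclude. The argument is correct and matches the paper's in all essential steps, including the monotonicity analysis of $\Re g_{t,2}$ that you flag as the main technical point.
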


\begin{proof} 	
	The proof is similar to the one of \cite[Theorem 7.1]{Jaber2019affine}. 
	For $t \in [0,T]$ 
	let $g_t$ be defined by \eqref{eq:def_pi_h} and observe that the first component $g_{t,1}$ of $g_t$ equals $0$ in the current setting. As in the proof of \cite[Theorem 6.1]{Jaber2019affine} it follows from \cite[(3.9) and (3.10)]{Jaber2019affine} that $\sup_{r\leq T}\| (\Delta_r\ol K)\star \ol L \|_{TV}<\infty$. 
	Hence, we can apply Theorem~\ref{thm:represent_for_Y}(ii)  to obtain that 
	\begin{equation*}
	\begin{split}
	Y_t & = -g_{t,2}(t) V_0 + \psi_1(t) \log(S_t)
	+ \psi_2(t) L(\{0\})V_t 
	+ \int_{[0,t]} (d\Re g_{t,2}(s)) V_{t-s} \\
	& \quad + \int_0^t f_1(r)\log(S_r)dr 
	+ \int_0^t f_2(r)V_r dr 
	+  \int_t^T \kappa(s) \theta(s) \psi_2(s)ds, \quad t \in [0,T].
	\end{split}
	\end{equation*}
	Consider the real part
	\begin{equation}\label{eq:Re_Y_in_propo}
	\begin{split}
	\Re Y_t & = -\Re g_{t,2}(t) V_0 +  \Re\psi_1(t) \log(S_t)
	+ \Re \psi_2(t) L(\{0\})V_t 
	+ \int_{[0,t]} (d\Re g_{t,2}(s)) V_{t-s} \\
	& \quad + \int_0^t \Re f_1(r)\log(S_r)dr 
	+ \int_0^t \Re f_2(r)V_r dr 
	+  \int_t^T \kappa(s) \theta(s) \Re\psi_2(s)ds, \quad t \in [0,T],
	\end{split}
	\end{equation}
	and recall that for $t \in [0,T]$ 
	\begin{equation*}
	\begin{split}
	\Re g_{t,2}(r) & = -\int_{(r,r+T-t]} \Re \psi_2(t+s-r)L(ds)
	= \int_{(t,T]} - \Re \psi_2(s) L(r-t+ds)
	, \quad r \in [0,t].
	\end{split}
	\end{equation*}
	Due to $\Re\psi_2\leq 0$ and nonnegativity of $L$ we have that $\Re g_{t,2}$ is nonnegative for all $t \in [0,T]$. 
	This yields that $-\Re g_{t,2}(t)V_0\leq 0$ for all $t \in [0,T]$.  
	Furthermore, observe that the facts that $\Re\psi_2\leq 0$ and that $L$ is nonincreasing (in the sense of assumption \ref{assumption:special_kernel}) imply that for all $t \in [0,T]$ and $r_1<r_2$ in $[0,t]$ 
	\begin{equation*}
	\begin{split}
	\Re g_{t,2}(r_2) - \Re g_{t,2}(r_1) 
	& = \int_{(t,T]} - \Re \psi_2(s) \left(L(r_2-t+ds) - L(r_1-t+ds) \right) \leq 0.
	\end{split}
	\end{equation*}
	Hence, $\Re g_{t,2}$ is nonincreasing for all $t \in [0,T]$.	
	Since $V$ is $[0,\infty)$-valued, it follows that 
	$\int_{[0,t]} (d\Re g_{t,2}(s)) V_{t-s} \leq 0$ for all $t \in [0,T]$. 
	Moreover, the facts that $V$ is $[0,\infty)$-valued, that $L$ is nonnegative, that $\Re \psi_2 \leq 0$ and that $\Re f_2\leq 0$ imply that $\Re \psi_2(t)L(\{0\})V_t \leq 0$ and $\int_0^t \Re f_2(r)V_rdr\leq 0$ for all $t \in [0,T]$. 
	We also have by $\Re \psi_2 \leq 0$ and $\kappa,\theta\geq 0$ that $\int_t^T \kappa(s)\theta(s) \Re \psi_2(s)ds \leq 0$ for all $t \in [0,T]$. 
	Altogether, the preceding discussion shows that it follows from \eqref{eq:Re_Y_in_propo} that 
	\begin{equation*}
	\begin{split}
	\Re Y_t & \leq \Re\psi_1(t) \log(S_t)
	+ \int_0^t \Re f_1(r)\log(S_r)dr, \quad t \in [0,T].
	\end{split}
	\end{equation*}
	Integration by parts then yields that 
	\begin{equation}\label{eq:Y_heston_estimate_int_by_parts}
	\begin{split}
	\Re Y_t & \leq \Re\psi_1(0) \log(S_0)
	+ \int_0^t \Re \psi_1(r)d\log(S_r), \quad t \in [0,T].
	\end{split}
	\end{equation} 
	Define 
	\begin{equation*}
	\begin{split}
	U_t & = \int_0^t \Re\psi_1(s) \eta(s) \sqrt{V_s} \left( \sqrt{1-\rho^2} dW_s^{(1)} + \rho dW_s^{(2)} \right), \quad t \in [0,T],
	\end{split}
	\end{equation*} 
	and note that since $\Re \psi_1 \in [0,1]$ and $V$ is $[0,\infty)$-valued, it follows from \eqref{eq:Y_heston_estimate_int_by_parts} and \eqref{eq:log_S_t} that 
	\begin{equation*}
	\begin{split}
	\Re Y_t & \leq \Re\psi_1(0) \log(S_0)
	+ U_t - \frac12 \langle U \rangle_t, \quad t \in [0,T].
	\end{split}
	\end{equation*} 
	We obtain that 
	\begin{equation*}
	\| \exp(Y_t) \| = \exp(\Re Y_t) \leq S_0^{\Re \psi_1(0)} \exp\left(U_t - \frac12 \langle U\rangle_t\right), \quad t\in[0,T].
	\end{equation*}
	Since $\eta$ and $\Re \psi_1$ are bounded, Lemma~\ref{lemma:e_U_is_martingale} yields that $\exp\left(U - \frac12 \langle U\rangle\right)$ is a martingale, and therefore also $\exp(Y)$ is a true martingale. 
	It follows from Theorem~\ref{thm:multdim_volt_kts} that \eqref{eq:conditional_Fourier_Laplace_formula} holds true.
\end{proof}

\begin{corollary}\label{cor:uniqueness_S_V}
	Let \ref{assumption:special_kernel} be satisfied. Then, any two $[0,\infty)$-valued continuous weak solutions of \eqref{eq:var_heston} have the same law.
\end{corollary}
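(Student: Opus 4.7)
The plan is to exploit the Fourier--Laplace formula of Proposition~\ref{propo:exp_Y_is_true_martingale_heston}: by a judicious choice of test function $f$, I can express the joint characteristic function of the integrated variance at finitely many times in the deterministic form $\exp(Y_0)$, where $Y_0$ depends only on the coefficients and on $V_0$ and not on the particular weak solution. This yields equality of finite-dimensional distributions of the integrated process, from which the law of $V$ is recovered via differentiation.

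Concretely, given two $[0,\infty)$-valued continuous weak solutions $V$ and $V'$ of~\eqref{eq:var_heston}, I enlarge the underlying filtered probability spaces (if necessary) to support independent Brownian motions $W^{(1)}$ and $W^{'(1)}$, and define $S$, $S'$ via \eqref{eq:formula_for_S_inhom_Heston}. This produces two continuous weak solutions $(S,V)$, $(S',V')$ of the full inhomogeneous Volterra-Heston model with the same initial condition $(\log S_0,V_0)$. For $n \in \N$, $0 < t_1 < \dots < t_n \leq T$ and $\lambda_1,\dots,\lambda_n \in \R$, I then take
\begin{equation*}
u=(0,0)\in(\C^2)^*, \qquad f(s) = \left(0,\, i\sum_{j=1}^n \lambda_j \mathbf{1}_{[0,t_j)}(s)\right), \quad s \in [0,T].
\end{equation*}
Then $f \in L^1([0,T],(\C^2)^*)$ and $\psi_1 = u_1 + \int_\cdot^T f_1(s)\,ds \equiv 0$, so that $\Re\psi_1 = 0 \in [0,1]$ and $\Re u_2 = \Re f_2 = 0$. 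Hence the hypotheses of Propositions~\ref{propo:ex_unique_soln_riccati_volt_eq} and~\ref{propo:exp_Y_is_true_martingale_heston} are fulfilled.

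By Proposition~\ref{propo:ex_unique_soln_riccati_volt_eq} the Riccati-Volterra equation~\eqref{eq:riccati_heston_gen_b} has a unique solution $\psi_2 \in L^2([0,T],\C)$, depending only on the coefficients and on $(\lambda_j,t_j)_{j=1}^n$, and Proposition~\ref{propo:exp_Y_is_true_martingale_heston} then gives
\begin{equation*}
E\left[\exp\left(i\sum_{j=1}^n \lambda_j \int_0^{t_j} V_s\,ds\right)\right] = \exp(Y_0),
\end{equation*}
together with the analogous identity for $V'$. Substituting $\psi_1 \equiv 0$ and $u=0$ into~\eqref{eq:def_Y_kts}, the terms involving $\log S_0$ drop out and $Y_0$ is seen to depend only on $V_0$, on the coefficients and on $(\lambda_j,t_j)_{j=1}^n$; therefore both right-hand sides agree, and $(\int_0^{t_j} V_s\,ds)_{j=1}^n$ and $(\int_0^{t_j} V'_s\,ds)_{j=1}^n$ have the same characteristic function, hence the same law. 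Letting $n$ and $(t_j)$ vary shows that the continuous processes $F_t = \int_0^t V_s\,ds$ and $F'_t = \int_0^t V'_s\,ds$ share all finite-dimensional distributions; since $V_t = \lim_{h\downarrow 0} h^{-1}(F_{t+h}-F_t)$ pathwise by continuity of $V$ (and similarly for $V'$), the finite-dimensional distributions of $V$ and $V'$ coincide, which gives the claim. No serious obstacle arises: the delicate points are only the verification that $Y_0$ does not depend on $\log S_0$ once $\psi_1\equiv 0$, and the observation that enlarging the probability space by an independent $W^{(1)}$ leaves the law of $V$ unchanged.
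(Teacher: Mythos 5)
Your proof is correct and rests on the same engine as the paper's: apply Proposition~\ref{propo:ex_unique_soln_riccati_volt_eq} and Proposition~\ref{propo:exp_Y_is_true_martingale_heston} with $u=0$ and $f_1\equiv 0$, so that the time-zero formula \eqref{eq:time_zero} expresses a transform of $V$ purely in terms of $V_0$, the coefficients and the (unique) Riccati solution $\psi_2$, hence independently of the particular weak solution. The two arguments differ in how the finite-dimensional laws of $V$ are then extracted. The paper keeps $f_2$ real and nonpositive and approximates the Laplace transform $E\bigl[\exp\bigl(-\sum_j \lambda_j V_{t_j}\bigr)\bigr]$, $\lambda_j\ge 0$, by $E\bigl[\exp\bigl(\int_0^T f_N(s)V_s\,ds\bigr)\bigr]$ with continuous nonpositive $f_N$ concentrating at the points $t_j$; this uses continuity of $V$ and dominated convergence but needs no differentiation. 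You instead take $f_2$ purely imaginary of indicator type --- admissible, since the propositions only require $\Re f_2\le 0$ and $f\in L^1$ --- obtain the joint characteristic function of the integrated process $\bigl(\int_0^{t_j}V_s\,ds\bigr)_j$ in closed form, and recover the finite-dimensional laws of $V$ by differentiating $F_t=\int_0^t V_s\,ds$ pathwise (almost sure convergence of the difference quotients gives convergence in law, so the limit's law is determined). Both routes are sound; yours trades the approximation of point masses for the extra differentiation step and has the advantage of working with characteristic functions rather than Laplace transforms of nonnegative vectors. Your explicit remark that the auxiliary price process $S$ can be constructed on an enlarged space supporting an independent $W^{(1)}$, without altering the law of $V$, makes precise a point the paper leaves implicit.
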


\begin{proof}
The reasoning is similar to the end of the proof of \cite[Theorem 6.1]{Jaber2019affine}. 
As before, suppose that $V$ is a $[0,\infty)$-valued continuous weak solution of \eqref{eq:var_heston}. 
Let $n \in \N$, $0\leq t_1 < \ldots < t_n \leq T$ and $\lambda_j \in [0,\infty)$, $j \in \{1,\ldots,n\}$. 
Then, $E\left[\exp\left(-\sum_{j=1}^n \lambda_j V_{t_j} \right)\right]$ can be approximated by a sequence  $E\left[\exp\left(\int_0^T f_N(s)V_s ds \right)\right]$, $N \in \N$, where $f_N \in C([0,T],(-\infty,0])$, $N\in \N$. 
For each $N \in \N$ consider the Riccati-Volterra equation \eqref{eq:riccati_multdim_kts} with $u_1=0=u_2$, $f_1\equiv 0$ and $f_2=f_N$. 
By Proposition~\ref{propo:ex_unique_soln_riccati_volt_eq} and Proposition~\ref{propo:exp_Y_is_true_martingale_heston} we have that for all $N \in \N$  
there exists a unique solution $\psi_{2,N} \in L^2([0,T],\C)$ of \eqref{eq:riccati_heston_gen_b}, and  $\left(\exp(Y_{t,N})\right)_{t \in [0,T]}$ with $Y_N=(Y_{t,N})_{t \in [0,T]}$ defined by \eqref{eq:def_Y_kts} is a true martingale. 
It follows from \eqref{eq:time_zero} that for all $N \in \N$
\begin{equation}\label{eq:Laplace_trafo_not_dep_on_V}
\begin{split}
& E\left[ \exp\left( \int_0^T f_{N}(s)V_s ds \right) \right]\\
& = \exp\left( \int_0^T \kappa(s) \theta(s)\psi_{2,N}(s) ds + V_0 \int_0^T f_{N}(s) - \kappa(s) \psi_{2,N}(s) + \frac12 \bar\sigma^2(s) \psi_{2,N}^2(s) ds \right).
\end{split}
\end{equation}
Suppose that $\wh V$ is another $[0,\infty)$-valued  continuous weak solution of \eqref{eq:volterra_d_volt_inhom}. 
We then obtain from \eqref{eq:Laplace_trafo_not_dep_on_V} that $E\left[ \exp\left( \int_0^T f_{N}(s)V_s ds \right) \right]=E\left[ \exp\left( \int_0^T f_{N}(s)\wh V_s ds \right) \right]$, $N\in\N$. 
Therefore, $(V_{t_1},\ldots,V_{t_n})$ and $(\wh V_{t_1},\ldots,\wh V_{t_n})$ possess the same Laplace transform. It follows that the law of $V$ and the law of $\wh V$ are equal.
\end{proof}

The previous results enable us to obtain that the Fourier-Laplace functional (at time zero) in the inhomogeneous Volterra-Heston model with fractional kernel is given in terms of the solution of a fractional Riccati equation. 
A fractional kernel is of particular interest because it is commonly used to model roughness, for example in the rough Heston model (see e.g. \cite{Jaisson2016roughHawkes},  \cite{Euch2019characteristicFctRoughHeston}, \cite{Euch2018perfectHedging} and  \cite{Euch2019rougheningHeston}). 
The rough Heston model constitutes a special case of our inhomogeneous Volterra-Heston model by setting $\eta(t)=1$ for all $t \in [0,T]$, taking $\kappa, \theta,\bar\sigma$ to be constant, and choosing a fractional kernel. 
The following result thus generalizes results in \cite{Euch2019characteristicFctRoughHeston} and \cite[Example 7.2]{Jaber2019affine} to include a time-dependent volatility coefficient $\eta$ and time-dependent $\kappa$, $\theta$ and $\bar\sigma$.
\begin{corollary}
	Let $\alpha\in (\frac{1}{2},1)$ and assume that $k(t)=\frac{t^{\alpha-1}}{\Gamma(\alpha)}$, $t\in (0,T]$.  
	Suppose that $u \in (\C^2)^*$ and $f \in L^1\left( [0,T], (\C^2)^* \right)$ such that $\Re \psi_1 \in [0,1]$, $\Re u_2\leq 0$ and $\Re f_2 \leq 0$, where $\psi_1 = u_1 + \int_{\cdot}^T f_1(s)ds$. 	
	Then there exists a unique solution $\wh\psi \in L^2([0,T],\C)$ of  
	\begin{equation}\label{eq:riccati_heston_final}
	\begin{split}
	(D^{\alpha}\wh \psi)(T-t) & = f_2(t) + \frac{\eta^2(t)}{2}\left( u_1^2 - u_1 + 2u_1 \int_t^{T} f_1(s)ds + \left( \int_t^{T} f_1(s)ds \right)^2 \right) 
	\\
	& \quad +\left(\rho\bar\sigma(t) \left(u_1 + \int_t^{T} f_1(s)ds \right) \eta(t) - \kappa(t) \right)\wh \psi(T-t)
	+\frac{\bar\sigma^2(t)}{2}\wh \psi^2(T-t), \\
	& \quad t \in [0,T],\\
	(I^{1-\alpha}\wh \psi)(0)&=u_2,
	\end{split}
	\end{equation}
	and with $\phi(T)= \int_0^T \kappa(s) \theta(s) \wh\psi(T-s)ds$ it holds 
	\begin{equation}\label{eq:time_zero_frac_heston}
	\begin{split}
	& E\left[ \exp\left( u_1 \log(S_T) + \int_0^T f_1(r) \log(S_r) dr + u_2 V_T  + \int_0^T f_2(r) V_r dr \right) \right] \\
	& = 
	\exp\left( \phi(T) + \left(u_1 + \int_0^T f_1(s)ds \right)\log(S_0) +
	(I^{1-\alpha}\wh \psi)(T)V_0 \right).
	\end{split}
	\end{equation}
\end{corollary}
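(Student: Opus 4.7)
The plan is to assemble three earlier results: Proposition \ref{propo:ex_unique_soln_riccati_volt_eq} for existence/uniqueness of the Riccati solution, Remark \ref{rem:frac_kernel} for the reformulation as a fractional Riccati equation together with the exponential-affine formula at time zero, and Proposition \ref{propo:exp_Y_is_true_martingale_heston} for the martingale property that activates the Fourier-Laplace formula.

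First I would invoke Proposition \ref{propo:ex_unique_soln_riccati_volt_eq}, whose hypotheses match those of the corollary verbatim, to obtain a unique $\psi_2\in L^2([0,T],\C)$ solving \eqref{eq:riccati_heston_gen_b}. Setting $\psi_1(t)=u_1+\int_t^T f_1(s)ds$ turns the pair $\psi=(\psi_1,\psi_2)$ into a solution of the full Riccati-Volterra equation \eqref{eq:riccati_multdim_kts} with the Heston coefficients \eqref{eq:coeffs_heston}. Next I would apply Remark \ref{rem:frac_kernel} with $\ol K=\diag(1,k)$, so $\alpha_1=1$ and $\alpha_2=\alpha$: writing $\wh\psi:=\psi_2(T-\cdot)$, the second component of \eqref{eq:equi_riccati_frac} becomes exactly \eqref{eq:riccati_heston_final} after the substitution $s=T-t$ together with the identities $\wh\psi(T-t)=\psi_2(t)$ and $\psi_1(t)=u_1+\int_t^Tf_1(s)ds$, noting that the bracket $u_1^2-u_1+2u_1\int_t^Tf_1+(\int_t^Tf_1)^2$ is $\psi_1^2(t)-\psi_1(t)$ up to the expansion of the square. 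Uniqueness of $\wh\psi$ in $L^2([0,T],\C)$ then follows from uniqueness of $\psi_2$.

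To obtain the representation \eqref{eq:time_zero_frac_heston}, I would next invoke Proposition \ref{propo:exp_Y_is_true_martingale_heston}, which under the same sign hypotheses guarantees that $\exp(Y)$, with $Y$ defined by \eqref{eq:def_Y_kts}, is a true martingale. Consequently \eqref{eq:conditional_Fourier_Laplace_formula} holds for all $t\in[0,T]$. Evaluating at $t=0$ and plugging into \eqref{eq:time_zero_frac} from Remark \ref{rem:frac_kernel} with $X=(\log S,V)^T$ and $X_0=(\log S_0, V_0)^T$ yields
\begin{equation*}
(I^{1-\alpha}\wt\psi)(T)X_0 = \wt\psi_1(T)\log S_0 + (I^{1-\alpha}\wh\psi)(T)V_0 = \Bigl(u_1+\int_0^T f_1(s)ds\Bigr)\log S_0 + (I^{1-\alpha}\wh\psi)(T)V_0,
\end{equation*}
since for $\alpha_1=1$ the operator $I^{1-\alpha_1}=I^0$ is the identity and $\wt\psi_1(T)=\psi_1(0)$.

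Finally, from the Heston coefficients \eqref{eq:coeffs_heston} we have $A^0\equiv 0$ and $b^0(s)=(0,\kappa(s)\theta(s))^T$, so the function $\phi$ defined in \eqref{eq:def_phi_chi} satisfies
\begin{equation*}
\phi(T)=\int_0^T \psi_2(s)\kappa(s)\theta(s)\,ds = \int_0^T \kappa(s)\theta(s)\,\wh\psi(T-s)\,ds,
\end{equation*}
which matches the definition of $\phi(T)$ in the statement. Combining these three pieces gives \eqref{eq:time_zero_frac_heston}. The proof is essentially assembly; the only nontrivial step is the bookkeeping in the time reversal $s\mapsto T-s$ and expanding the square $\psi_1^2(t)$ to recognise the bracket in \eqref{eq:riccati_heston_final}, which I expect to be the main (routine) obstacle.
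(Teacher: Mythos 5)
Your proposal is correct and follows essentially the same route as the paper: existence and uniqueness via Proposition~\ref{propo:ex_unique_soln_riccati_volt_eq}, the time-reversal/fractional reformulation via Remark~\ref{rem:frac_kernel} and \eqref{eq:equi_riccati_frac}, the martingale property via Proposition~\ref{propo:exp_Y_is_true_martingale_heston}, and then \eqref{eq:time_zero_frac} evaluated with the Heston coefficients \eqref{eq:coeffs_heston}. The only point worth making explicit (the paper's opening line) is that the fractional kernel $k(t)=t^{\alpha-1}/\Gamma(\alpha)$ satisfies assumption \ref{assumption:special_kernel}, which is what licenses the two propositions; your remark that their hypotheses match ``verbatim'' glosses over this citation.
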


\begin{proof}
	Note that $k$ satisfies assumption \ref{assumption:special_kernel} (cf. \cite[Example 3.7 and Example~6.2]{Jaber2019affine}). 
	By Proposition~\ref{propo:ex_unique_soln_riccati_volt_eq} there exists a unique solution $\psi_2 \in L^2([0,T],\C)$ of the Riccati-Volterra equation \eqref{eq:riccati_heston_gen_b}. 
	Then \eqref{eq:equi_riccati_frac} shows that $\psi_2(T-\cdot)$ solves \eqref{eq:riccati_heston_final}. 
	From Proposition~\ref{propo:exp_Y_is_true_martingale_heston} we obtain that the process $e^Y$ defined by \eqref{eq:def_Y_kts} is a true martingale. 
	Therefore, \eqref{eq:time_zero_frac} holds true, which in the current setting equals \eqref{eq:time_zero_frac_heston}.
\end{proof}

\appendix

\section{Auxiliary results}

The following lemma provides an inequality that is used in the proof of Lemma~\ref{lem:moments_of_soln_bdd}.
\begin{lemma}\label{lem:fg_estimate_1}
	Let $p \in (2,\infty)$, $d \in \N$, $s \in \R$, $t \in [s,\infty)$, $f \in L^2([s,t],\R^{d\times d})$ and $g \in L^p([s,t],\R^d)$. Then:
	\begin{equation*}
	\begin{split}
	\left\| \int_s^t f(u)g(u) du \right\|^p 
	& \leq d^{\frac{3p}{2}} (t-s)^{\frac{p}{2}} \left( \int_s^t \|f(u)\|^2 \|g(u)\|^p du \right) \left( \int_s^t \|f(u)\|^2 du \right)^{\frac{p}{2}-1} .
	\end{split}
	\end{equation*}
\end{lemma}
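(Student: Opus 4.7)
\bigskip
\noindent\textbf{Proof plan for Lemma~\ref{lem:fg_estimate_1}.}
The plan is to reduce the statement to an elementary three-factor Hölder inequality, after first passing from the vector-valued integral to a scalar integral of products of norms. The extra combinatorial factor $d^{3p/2}$ will be absorbed trivially at the end, since in fact a weaker power of $d$ already suffices.

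First I would observe that the Frobenius norm dominates the operator norm, so that for a.e.\ $u \in [s,t]$ one has $\| f(u) g(u) \| \leq \| f(u) \|_{\mathrm{op}} \| g(u) \| \leq \| f(u) \| \| g(u) \|$. Combining this with the triangle inequality for vector-valued integrals yields
\begin{equation*}
\left\| \int_s^t f(u) g(u) \, du \right\| \leq \int_s^t \| f(u) \| \, \| g(u) \| \, du.
\end{equation*}

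Next I would write the integrand as the product
\begin{equation*}
\| f(u) \| \, \| g(u) \| = \bigl( \| f(u) \|^{2/p} \| g(u) \| \bigr) \cdot \| f(u) \|^{(p-2)/p} \cdot 1
\end{equation*}
and apply Hölder's inequality with the three conjugate exponents $p$, $\tfrac{2p}{p-2}$, and $2$ (whose reciprocals sum to $\tfrac{1}{p} + \tfrac{p-2}{2p} + \tfrac{1}{2} = 1$). This gives
\begin{equation*}
\int_s^t \| f \| \, \| g \| \, du \leq \left( \int_s^t \| f \|^2 \| g \|^p du \right)^{\!1/p} \left( \int_s^t \| f \|^2 du \right)^{\!(p-2)/(2p)} (t-s)^{1/2}.
\end{equation*}
Raising both sides to the $p$-th power produces the claimed inequality without the factor $d^{3p/2}$. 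Since $d \geq 1$ this factor is $\geq 1$, so it can be inserted for free on the right-hand side to match the form stated in the lemma.

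There is essentially no obstacle: the only point that needs checking is that the Frobenius norm indeed dominates the operator norm of $f(u)$ (so the initial pointwise bound is valid) and that the three Hölder exponents have been paired with the correct factors; both are routine. In particular, the power of $d$ in the statement is not sharp, but the proof goes through with it because $d^{3p/2} \geq 1$ trivially.
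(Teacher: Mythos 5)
Your proof is correct, and it takes a genuinely different (and cleaner) route than the paper. The paper argues componentwise: it first applies Jensen's inequality twice to pull the double sum $\sum_{i,j}$ outside the $p$-th power (this is where the factor $d^{\frac{3p}{2}-2}$ arises), then bounds each scalar integral $\bigl|\int_s^t f_{ij}g_j\,du\bigr|^p$ by a Cauchy--Schwarz/Jensen argument with the splitting $|f_{ij}|^{2/p}|g_j|\cdot|f_{ij}|^{1-2/p}$, and finally re-sums, picking up the remaining factor $d^2$ to reach $d^{3p/2}$. You instead work with the norms globally: the pointwise bound $\|f(u)g(u)\|\le\|f(u)\|\,\|g(u)\|$ (valid since the Frobenius norm dominates the operator norm, or directly by Cauchy--Schwarz on each row), the triangle inequality for the vector-valued integral, and then a single three-factor H\"older inequality with exponents $p$, $\tfrac{2p}{p-2}$, $2$, whose reciprocals indeed sum to $1$ and which pair correctly with your factorization $\|f\|\,\|g\|=(\|f\|^{2/p}\|g\|)\cdot\|f\|^{(p-2)/p}\cdot 1$. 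Your approach buys a strictly sharper constant --- the inequality holds with constant $1$ in place of $d^{3p/2}$ --- and the stated form then follows trivially since $d^{3p/2}\ge 1$; the paper's componentwise route is more pedestrian but requires only the scalar Cauchy--Schwarz and Jensen inequalities. Since the lemma is only used to produce \emph{some} constant depending on $d$ and $p$ in the moment bound of Lemma~\ref{lem:moments_of_soln_bdd}, either version serves equally well there.
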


\begin{proof}
	It holds by Jensen's inequality that
	\begin{equation*}
	\begin{split}
	\left\| \int_s^t f(u)g(u) du \right\|^p 
	& = \left( \sum_{i=1}^d \left| \sum_{j=1}^d \int_s^t f_{ij}(u) g_j(u) du \right|^2 \right)^{\frac{p}{2}} 
	\leq d^{\frac{p}{2}-1} \sum_{i=1}^d \left| \sum_{j=1}^d \int_s^t f_{ij}(u) g_j(u) du \right|^p \\
	& \leq d^{\frac{3p}{2}-2} \sum_{i=1}^d  \sum_{j=1}^d \left| \int_s^t  f_{ij}(u) g_j(u) du \right|^p .
	\end{split}
	\end{equation*} 
	Observe then that the Cauchy-Schwarz inequality and Jensen's inequality yield for all $i,j \in \{1,\ldots,d\}$ that
	\begin{equation*}
	\begin{split}
	& \left| \int_s^t f_{ij}(u)g_j(u) du \right|^p 
	\leq \left( \int_s^t | f_{ij}(u)|^{\frac{2}{p}} |g_j(u)| |f_{ij}(u)|^{1-\frac{2}{p}} du \right)^{p} \\
	& \leq \left( \int_s^t |f_{ij}(u)|^{\frac{4}{p}} |g_j(u)|^2 du  \right)^{\frac{p}{2}} \left( \int_s^t |f_{ij}(u)|^{2-\frac{4}{p}} du \right)^{\frac{p}{2}} \\
	& \leq (t-s)^{\frac{p}{2}-1} \left( \int_s^t |f_{ij}(u)|^2 |g_j(u)|^p du \right)  \left( \left( \int_s^t |f_{ij}(u)|^{2-\frac{4}{p}} du \right)^{\frac{p}{p-2}} \right)^{\frac{p-2}{2}} \\
	& \leq (t-s)^{\frac{p}{2}-1} \left( \int_s^t |f_{ij}(u)|^2 |g_j(u)|^p du \right) \left( (t-s)^{\frac{p}{p-2}-1} \int_s^t |f_{ij}(u)|^{\left(2-\frac{4}{p}\right)\frac{p}{p-2}} du \right)^{\frac{p-2}{2}} \\
	& = (t-s)^{\frac{p}{2}} \left( \int_s^t |f_{ij}(u)|^2 |g_j(u)|^p du \right) \left( \int_s^t |f_{ij}(u)|^2 du \right)^{\frac{p}{2}-1} .
	\end{split}
	\end{equation*}
\end{proof}

The following lemma shows that \cite[Lemma 7.3]{Jaber2019affine} also holds true in the inhomogeneous Volterra-Heston model.
\begin{lemma}\label{lemma:e_U_is_martingale}
	Suppose the setting of the inhomogeneous Volterra-Heston model of Section~\ref{sec:inhom_volt_heston_model} and that $V$ is a $[0,\infty)$-valued continuous weak solution of \eqref{eq:var_heston}. Let $g\in L^{\infty}([0,T],\R)$ and define 
	\begin{equation}
	U_t = \int_0^t g(s) \sqrt{V_s}  \left(\sqrt{1-\rho^2}dW^{(1)}_s+\rho dW^{(2)}_s\right)ds, \quad t \in [0,T] .
	\end{equation}
	Then $\left(\exp\left( U_t - \frac12 \langle U \rangle_t \right)\right)_{t\in[0,T]}$ is a martingale.
\end{lemma}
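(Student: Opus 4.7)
Since $U$ is a continuous local martingale (stochastic integral of the locally bounded integrand $g(s)\sqrt{V_s}$ against Brownian motion), $Z_t:=\exp(U_t-\tfrac12\langle U\rangle_t)$ is a nonnegative local martingale, hence a supermartingale with $E[Z_0]=1$; the task reduces to establishing $E[Z_T]=1$. My plan is the localization-plus-Girsanov strategy of \cite[Lemma 7.3]{Jaber2019affine}, adapted to handle the time-dependent coefficients $\kappa,\theta,\bar\sigma$ and the bounded weight $g$. Set $\tau_n=\inf\{t\in[0,T]:\int_0^t V_s\,ds\geq n\}\wedge T$; since $V$ is continuous on $[0,T]$, $\int_0^T V_s\,ds<\infty$ a.s., so $\tau_n\uparrow T$ almost surely. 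The deterministic bound $\langle U\rangle_{T\wedge\tau_n}\leq\|g\|_\infty^2 n$ makes Novikov's criterion immediate, so $Z^{\tau_n}$ is a true $P$-martingale, and I define $Q_n=Z_{T\wedge\tau_n}\cdot P$ on $\cF_T$.

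Under $Q_n$, Girsanov yields the Brownian motions $\tilde W^{(i),n}_t=W^{(i)}_t-\int_0^{t\wedge\tau_n}c_i g(s)\sqrt{V_s}\,ds$ with $c_1=\sqrt{1-\rho^2}$ and $c_2=\rho$. Substituting into \eqref{eq:var_heston}, $V$ satisfies under $Q_n$ a stochastic Volterra equation with the same kernel $k$ and diffusion $\bar\sigma(r)\sqrt{V_r}$ but with drift
\[ \tilde b^{(n)}(r,V_r) = \kappa(r)\theta(r) - \kappa(r)V_r + \bar\sigma(r)\rho g(r)V_r\,1_{\{r\leq\tau_n\}}, \]
which, by boundedness of $\kappa,\theta,\bar\sigma,g$ on $[0,T]$, admits a \emph{deterministic} pathwise linear growth bound $|\tilde b^{(n)}(r,v)|\leq C(1+|v|)$ with $C$ independent of $n$ (and similarly $\bar\sigma(r)\sqrt{v}\leq\|\bar\sigma\|_\infty(1+v)$ for $v\geq 0$). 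Because the proof of Lemma~\ref{lem:moments_of_soln_bdd} only exploits pathwise linear growth of the coefficients (continuity enters merely to make integrals well defined, a property unaffected by the random indicator), the same argument carried out under $Q_n$ yields $\sup_{t\in[0,T]}E_{Q_n}[V_t]\leq C'$ with $C'$ independent of $n$.

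By Markov's inequality and Fubini,
\[ Q_n(\tau_n<T)=Q_n\!\left(\int_0^T V_s\,ds>n\right)\leq\frac{1}{n}\int_0^T E_{Q_n}[V_s]\,ds\leq\frac{C'T}{n}\xrightarrow{n\to\infty}0. \]
Decomposing $1=E[Z_{T\wedge\tau_n}]=E[Z_T\,1_{\{\tau_n=T\}}]+E[Z_{T\wedge\tau_n}\,1_{\{\tau_n<T\}}]=E[Z_T\,1_{\{\tau_n=T\}}]+Q_n(\tau_n<T)$, the first summand increases to $E[Z_T]$ by monotone convergence (the events $\{\tau_n=T\}$ increase to a set of full measure) while the second vanishes, so $E[Z_T]=1$; a nonnegative supermartingale with $E[Z_T]=E[Z_0]$ is automatically a martingale on $[0,T]$, which proves the lemma. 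I expect the main obstacle to be precisely this uniform-in-$n$ moment bound for $V$ under $Q_n$: the random indicator $1_{\{r\leq\tau_n\}}$ prevents a direct invocation of Lemma~\ref{lem:moments_of_soln_bdd}, and the entire plan hinges on the observation that its proof requires only deterministic pathwise linear growth, a condition that survives the change of measure uniformly in $n$.
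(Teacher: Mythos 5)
Your proof is correct and rests on the same core strategy as the paper's: the paper's own proof merely verifies that the time-inhomogeneous coefficients $\check b(t,x,\omega)=\kappa(t)\theta(t)-(\kappa(t)-\bar\sigma(t)g(t)1_{\{t\leq\tau_n(\omega)\}})x$ and $\check\sigma(t,x)=\bar\sigma(t)\sqrt{x}$ satisfy a linear growth bound uniform in $n$ and $\omega$, and then invokes the localization--Novikov--Girsanov argument of \cite[Lemma 7.3]{Jaber2019affine}, which is precisely the argument you reconstruct in full. The one genuine difference is your localizing sequence: the paper (following \cite{Jaber2019affine}) stops at $\tau_n=\inf\{t\colon V_t>n\}\wedge\widetilde T$, so that proving $Q_n(\tau_n<\widetilde T)\to 0$ requires controlling $E_{Q_n}[\sup_{t}V_t]$, which is why the paper checks that the conditions of the H\"older-continuity moment estimate \cite[Lemma 2.4]{Jaber2019affine} hold under $Q_n$. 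Your choice $\tau_n=\inf\{t\colon\int_0^t V_s\,ds\geq n\}\wedge T$ still yields the deterministic Novikov bound $\langle U\rangle_{T\wedge\tau_n}\leq\|g\|_\infty^2\, n$, but reduces the escape-probability estimate to Markov's inequality plus Fubini and $\sup_{t}E_{Q_n}[V_t]<\infty$, i.e.\ only the fixed-time first-moment bound of Lemma~\ref{lem:moments_of_soln_bdd} is needed --- extended, as you correctly observe and as the paper also notes via \cite[Remark 3.2]{Jaber2019affine}, to progressively measurable random coefficients with a uniform deterministic linear growth bound, which is all the Gronwall argument in that proof actually uses. This is a mild but genuine simplification, since it dispenses with the pathwise sup/H\"older estimate entirely. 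The remaining steps (Novikov for the stopped exponential, the Girsanov form of the drift under $Q_n$ --- where your inclusion of the factor $\rho$ is the careful version of the paper's $\check b$ --- the decomposition $1=E[Z_T 1_{\{\tau_n=T\}}]+Q_n(\tau_n<T)$, and the conclusion that a nonnegative supermartingale with constant expectation is a martingale) are all sound.
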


\begin{proof}
	As in the proof of \cite[Lemma 7.3]{Jaber2019affine} we define  
	for all $\widetilde T \in [0,T]$ and all $n \in \N$ the stopping times $\tau_n = \inf\{t\geq0\colon V_t >n\} \wedge \widetilde T$. 
	Consider $\check b \colon [0,T]\times [0,\infty) \times \Omega \to \R,$ $\check b(t,x,\omega) =  \kappa(t)\theta(t) - (\kappa(t)-\bar\sigma(t)g(t)1_{\{t\leq \tau_n(\omega)\}})x$ and $\check \sigma \colon [0,T]\times [0,\infty) \to \R,$ $\check \sigma(t,x) = \bar\sigma(t)\sqrt{x}$.  
	Since $\kappa$, $\theta$ and $\bar\sigma$ are continuous on $[0,T]$ and hence bounded on $[0,T]$, there exists $c_{LG} \in (0,\infty)$ such that for all $t \in [0,T]$, $x \in [0,\infty)$, $\omega \in \Omega$, $n \in \N$ it holds   $\lvert \check b(t,x,\omega) \rvert + \lvert\check \sigma(t,x) \rvert \leq c_{LG} (1+\lvert x\rvert)$. 
	Therefore, it follows from \cite[Remark 3.2 and Lemma 3.1]{Jaber2019affine} that the conditions of \cite[Lemma 2.4]{Jaber2019affine} are satisfied. 
	We can hence follow the proof of \cite[Lemma 7.3]{Jaber2019affine} to obtain the same claim also in the inhomogeneous Volterra-Heston model.
\end{proof}

\bibliographystyle{abbrv}
\bibliography{literature}

\end{document}